\documentclass[11pt,a4paper]{amsart}[2010]
\usepackage{tikz}
\usepackage{enumitem}
\usepackage{hyperref}

\title{Rokhlin-type properties, approximate innerness and $\Zh$-stability}

\author{Ilan Hirshberg}

\address{Department of Mathematics, Ben Gurion University of the Negev, \phantom{----------------}\linebreak\text{}\hspace{3.5mm}
P.O.B. 653, Be'er Sheva 84105, Israel}
\email{ilan@math.bgu.ac.il}

\thanks{
	This research was supported by the Israel Science Foundation grant no.~476/16, by the 
	 Mittag-Leffler Institute (as part of the 2016 program on classification of operator algebras), and by the Fields Institute}

\usepackage{amsmath}
\usepackage{amssymb}
\usepackage{amsthm}
\usepackage[all]{xypic}

\allowdisplaybreaks

\theoremstyle{plain}

\newtheorem{Thm}{Theorem}[section]
\newtheorem{Cor}[Thm]{Corollary}
\newtheorem{Lemma}[Thm]{Lemma}

\newtheorem{Prop}[Thm]{Proposition}

\theoremstyle{definition}
\newtheorem{Def}[Thm]{Definition}

\newtheorem{Exl}[Thm]{Example}
\newtheorem{Rmk}[Thm]{Remark}
\newtheorem{Question}[Thm]{Question}

\newcommand{\Zh}{\mathcal{Z}}

\newcommand{\T}{{\mathbb T}}
\newcommand{\R}{{\mathbb R}}
\newcommand{\N}{{\mathbb N}}
\newcommand{\Z}{{\mathbb Z}}
\newcommand{\C}{{\mathbb C}}

\newcommand{\aut}{\mathrm{Aut}}

\newcommand{\eps}{\varepsilon}
\numberwithin{equation}{section}

\newcommand{\dimrokc}{\dim^{\mathrm{c}}_{\mathrm{Rok}}}

\newcommand{\Ann}{\mathrm{Ann}}

\newcommand{\Div}{\mathrm{Div}}
\newcommand{\Cu}{\mathrm{Cu}}

\begin{document}
\begin{abstract}
We establish four results concerning connections between actions on separable $C^*$-algebras  with  Rokhlin-type properties and absorption of the Jiang-Su algebra $\Zh$. For actions of residually finite groups or of the reals which have finite Rokhlin dimension with commuting towers, we show that if the action of any nontrivial group element is approximately inner then the $C^*$-algebra acted upon is $\Zh$-stable. Without the assumption on approximate innerness, we show that the crossed product has good divisibility properties under mild assumptions. We also establish an analogous result for the generalized tracial Rokhlin property and tracial versions of approximate innerness and $\Zh$-absorption for actions of finite groups and of the integers.
For actions of a single automorphism which have the Rokhlin property, we show that a condition which is strictly weaker than requiring that some power of the automorphism is approximately inner is sufficient to obtain that the crossed product absorbs $\Zh$ even when the original algebra is not.
\end{abstract}

\maketitle

The property of $\Zh$-stability or $\Zh$-absorption, that is, absorbing tensorially the Jiang-Su algebra (\cite{Jiang-Su}), is a key regularity property for $C^*$-algebra, and is viewed as the analogue of being McDuff for von Neumann algebras. For simple separable and nuclear $C^*$-algebras,  $\Zh$-stability is now known to be equivalent to having finite nuclear dimension (\cite{Winter-nuc-dim-implies-Z-stable,CETWW}). Capping decades of research, for $C^*$-algebras which furthermore satisfy the Universal Coefficient Theorem, it has been shown that those properties are sufficient to establish classfiability in terms of the Elliottt invariant (\cite{TWW,EGLN:arXiv}). Though the main interest in in $\Zh$-stability has been in the context of the Elliott classification program, $\Zh$-stability is an important regularity property in the context of non-nuclear $C^*$-algebras as well. For instance, nuclearity is not needed in order to establish the implications of $\Zh$-stability from \cite{Rordam-stable-real-rank}.

The Rokhlin property (see for instance \cite{kishimoto-rokhlin}), later generalized to Rokhlin dimension (\cite{HWZ,SWZ}), can be viewed as a regularity property for group actions on $C^*$-algebras. In the context of establishing $\Zh$-stability for crossed products, Rokhlin dimension and other variants of the Rokhlin property have come to play in two principal ways. One way is to establish permanence results: if $A$ is separable and $\Zh$-stable and $\alpha$ is an action of a group $G$ on $A$ which has finite Rokhlin dimension with commuting towers, then the crossed product is $\Zh$-stable as well. This holds for actions of countable residually finite groups which have a box space of finite asymptotic dimension and for actions of $\R$; we refer the reader to \cite{HWZ,hirshberg-phillips,SWZ,HSWW} for precise statements and definitions. In the context of the Rokhlin property, this was considered earlier in \cite{HW-PJM}, and we refer the reader to \cite{Sato,Matui-Sato-Z-stability} for related work which uses a tracial version of the Rokhlin property.
 A different way in which Rokhlin dimension comes about in this context is to establish permanence of finite nuclear dimension: if $A$ has finite nuclear dimension then so does the crossed product. If the crossed product is furthermore simple, one uses \cite{Winter-nuc-dim-implies-Z-stable} (or \cite{tikuisis-non-unital} in the non-unital case) to deduce that the crossed product is in fact $\Zh$-stable. This is one approach for showing that crossed products of $C(X)$ by minimal homeomorphisms are $\Zh$-stable, where $X$ is compact metric with finite covering dimension; see \cite{HWZ,Szabo,SWZ,HSWW} (and \cite{Toms-Winter-GAFA,Elliott-Niu-mean-dimension,kerr-szabo} for papers which obtain related results without Rokhlin dimension techniques).
 
 In this paper, we further investigate the connection between Rokhlin-type properties and $\Zh$-stability. In Section \ref{section Rokhlin dimension and approximate innerness}, we show that if $A$ admits an action of a residually finite group with finite Rokhlin dimension with commuting towers, and the action of at least one nontrivial group element is approximately inner, then in fact $A$ has to be $\Zh$-stable (and in particular, the crossed product is $\Zh$-stable).  

In Section \ref{section:tracial results}, we provide tracial analogues for the results in Section \ref{section Rokhlin dimension and approximate innerness}. Namely, if $A$ is simple unital with stable rank 1 and admits an action of $\Z$ or of a finite group with the generalized tracial Rokhlin property such that the action of at least one nontrivial group element is strongly tracially approximately inner (in the sense of \cite{phillips-tracial}) then $A$ has to be tracially $\Zh$-absorbing (which, if $A$ is also nuclear, means that $A$ is $\Zh$-stable).

The results of Sections \ref{section Rokhlin dimension and approximate innerness} and \ref{section:tracial results} should best be viewed as obstructions, rather as a way of establishing that a given $C^*$-algebra is $\Zh$-stable. The Rokhlin property and its various variants play an important role in studying group actions and their associated crossed products. One reason for that is that those properties are quite common. For example, the  automorphisms with Rokhlin dimension at most $1$ with commuting towers of a unital separable $\Zh$-stable form a dense $G_{\delta}$ set in the automorphism group (\cite{HWZ}). The results in this paper show that the situation in the non-$\Zh$-stable case is very different. This is not to say that they do not exist. For example, the recent paper \cite{AGP2019} establishes results concerning the behavior of the radius of comparison when forming crossed products by actions of finite groups with the weak tracial Rokhlin property; the paper contains examples of such actions which have the Rohklin property, however the construction is quite involved.  It is not known whether there exists a simple separable unital nuclear $C^*$-algebras with positive radius of comparison which admits a finite group action with the weak tracial Rokhlin property but such the action on the trace space is trivial (\cite[Question 7.2]{AGP2019}). While we do not fully answer the question here, the results herein show that the most natural first approach to construct such an example, namely an approximately inner action (or even one which is only strongly tracially approximately inner) cannot work. 
 
 Without assuming any kind of approximate innerness, one cannot possibly expect $\Zh$-stability: indeed, any free action on a compact metric space with finite covering dimension gives rise to an action on $C(X)$ which has finite Rokhlin dimension (with commuting towers, which we get for free in this case), and $C(X)$ is obviously not $\Zh$-stable. Furthermore, the crossed product need not be $\Zh$-stable either. 
 We nevertheless show in Section \ref{section Rokhlin dimension and divisibility conditions} that finite Rokhlin dimension with commuting towers is sufficient to deduce that the crossed product has good divisibility properties in the sense studied by Robert and R{\o}rdam in \cite{Robert-Rordam}.
 In Section \ref{section Nearly approximately inner automorphisms}, we introduce a weaker property than approximate innerness, which, together with the Rokhlin property, is sufficient for showing that the crossed product is $\Zh$-stable. This property holds for examples which can be viewed as noncommutative odometers. We provide, for instance, examples of such actions on reduced group $C^*$-algebras for a family of ICC groups introduced by Dixmier and Lance, whose associated group factors have property $\Gamma$ but are not McDuff. As the Rokhlin property is a statement about the induced action on the central sequence algebra, this provides a highly non-commutative example, quite far from the $\Zh$-stable setting, in which we can nonetheless establish $\Zh$-stability results for the associated crossed product.
 
 We now recall some definitions and fix notations. 
To lighten notation, we sometimes write $x \approx_{\eps} y$ to mean $\|x-y\|<\eps$.  By an \emph{order zero} map we always mean a completely positive contractive order zero map; we refer the reader to \cite{winter-zacharias-order-zero} for more on order zero maps.
 
 For two positive integers $p,q$ we denote the associated dimension drop algebra by
 $$
 I_{p,q} = \{f \colon [0,1] \to M_p \otimes M_q \mid f(0) \in M_p \otimes 1 , f(1) \in 1 \otimes M_q\}
 $$
Those dimension drop algebras form the building blocks in the construction of the Jiang-Su algebra in \cite{Jiang-Su}.

We use the following definitions for the corrected central sequence algebra from \cite{Kirchberg-abel}, and the modified version introduced in \cite[Definition 5.1]{SWZ}, though we use slightly different notation. We note that we use an ultrafilter, whereas in \cite{SWZ} one uses the cofinite filter, but this makes no difference for any of the proofs herein. We fix a free ultrafilter $\omega$. Let $A$ be a separable $C^*$-algebra. Let $D \subset A_{\omega}$ be a separable $C^*$-subalgebra. Set $\Ann(D) \subset A_{\omega}$ to be the annihilator of $D$, that is, $\Ann(D) = \{x \in A_{\omega} \mid \forall y \in D: \; xy = yx =0 \}$. Then $\Ann(D)$ is an ideal in $A_{\omega} \cap D'$. We denote $F(A,D) = A_{\omega} \cap D' / \Ann(D)$. This is a unital $C^*$-algebra. If $D = A$, the canonical copy of $A$ in $A_{\omega}$, we write $F(A) = F(A,A)$ for short. (If $A$ is unital, then we simply have $F(A) = A_{\omega} \cap A'$.) If $D \subseteq E \subset A_{\omega}$ are separable $C^*$-algebras, notice that $\Ann(D) \supseteq \Ann(E)$ and $A_{\omega} \cap D' \supseteq A_{\omega} \cap E'$, and therefore there exists a canonical unital homomorphism $F(E,A) \to F(D,A)$.

If $G$ is a group and $\alpha \colon G \to \aut(A)$ is an action, then $\alpha$ induces actions on $A_{\omega}$, on $F(A)$ and on $F(D,A)$ for any separable $\alpha$-invariant $C^*$-subalgebra of $D$ of $A_{\omega}$. By slight abuse of notation, we denote those actions by $\alpha$ as well.

Let $A$ be a separable $C^*$-algebra, and let $\alpha \in \aut(A)$ be an automorphism (thought of as an action of $\Z$). The automorphism $\alpha$ is said to have the \emph{Rokhlin property} if for any $m \in \N$ there exists a partition of unity of projections $e_{0,0},e_{0,1},\ldots,e_{0,m-1},e_{1,0},e_{1,1},\ldots,e_{1,m} \in F(A)$ such that $\alpha(e_{j,k}) = e_{j,k+1}$ for $j=0,1$ and $k=0,1,\ldots,m-1+j$. Notice that this implies that $\alpha(e_{0,m-1}+e_{1,m} ) = e_{0,0} + e_{1,0}$. We refer to such a collection of projections as a double Rokhlin tower.
The single tower version of the Rokhlin property is what we get when we set $e_{1,k} = 0$ for $k=0,1,\ldots,m$, and it is much more restrictive than the Rokhlin property. We refer the reader to \cite{HWZ} for discussions of various flavors of Rokhlin dimension. There is a definition of Rokhlin dimension for actions of $\Z$ which generalizes the Rokhlin property, in that Rokhlin dimension zero is precisely the Rokhlin property, and a single tower version which generalizes the single tower Rokhlin property, but such that the Rokhlin property without single towers only implies Rokhlin dimension 1. Deviating from the notation in \cite{HWZ}, but following the convention in \cite{SWZ}, our notion of Rokhlin dimension refers to the single tower version (however, when we discuss the Rokhlin property, we assume the general one which may have two towers). The definition is discussed in section \ref{section Rokhlin dimension and approximate innerness}. When considering higher Rokhlin dimension, the difference between the single tower and double tower versions is less significant, because if one is finite then so is the other, and for most uses of Rokhlin dimension we only care about whether it is finite or not. The difference between the commuting and noncommuting tower versions is more significant (\cite{hirshberg-phillips}), and here we use the stronger, commuting tower version. 

I thank G\'{a}bor Szab\'{o} for some helpful remarks.

\section{Rokhlin dimension and approximate innerness}
\label{section Rokhlin dimension and approximate innerness}

Our goal in this section is to prove Theorem \ref{Thm:Z-absorption}. The main tool we use is that of universal spaces for actions with finite Rokhlin dimension with commuting towers.

Universal spaces for actions of finite groups which have finite Rokhlin dimension with commuting towers were introduced in \cite{hirshberg-phillips}, and studied further in \cite{GHS}. We refer the reader to the discussion after Definition 3.2 in \cite{GHS}.  For any finite group $G$ and non-negative integer $d$ there exists a free $G$-space $X_{G,d}$, which is a finite $d$-dimensional simplicial complex, such that an action of $G$ on a separable $C^*$-algebra $A$ has Rokhlin dimension with commuting towers at most $d$ if and only if there exists a unital equivariant homomorphism $C(X_{G,d}) \to F(A)$. 

One can introduce similar spaces for actions of more general residually finite groups. If $G$ is a group and $H$ is a finite index subgroup, then we recall from \cite[Definition 5.4]{SWZ} that an action $\alpha$ of $G$ on a separable $C^*$-algebra $A$ is said to have finite Rokhlin dimension at most $d$ with commuting towers relative to $H$ (denoted $\dimrokc(\alpha,H) \leq d$) if there exist equivariant completely positive contractive order zero maps
\[
\varphi^{(l)} \colon (C(G/H),G\mathrm{-shift}) \to (F(A),\alpha) \quad \mathrm{ for } \; l=0,1,\ldots,d
\]
with commuting images such that $\sum_{l=0}^d \varphi^{(l)}(1) = 1$. This is equivalent to requiring that there exists commuting positive contractions $\{f_s^{(l)}\}_{s \in G/H \; , \; l=0,1,\ldots,d} \subseteq F(A)$ such that 
\begin{enumerate}
	\item $f_s^{(l)} f_{s'}^{(l)} = 0$ if $s \neq s'$.
	\item $\sum_{l=0}^d \sum_{s \in G/H} f_s^{(l)} = 1$.
	\item $\alpha_g (f_s^{(l)}) = f_{g \cdot s}^{(l)}$ for all $g \in G$ and for all $s \in G/H$.
\end{enumerate}
The action $\alpha$ is then said to have Rokhlin dimension with commuting towers at most $d$, denoted $\dimrokc(\alpha) \leq d$, if $\dimrokc(\alpha,H) \leq d$ for all subgroups $H$ of finite index.

Consider the universal $C^*$-algebra generated by elements $\{f_s^{(l)}\}_{s \in G/H \; , \; l=0,1,\ldots,d}$ satisfying the above conditions. This universal $C^*$-algebra carries an action of $G$. We define $X = X_{G,H,d}$ to be the Gelfand spectrum of this $C^*$-algebra. The space $X$ has the structure of a $d$-dimensional simplicial complex, which can be described as follows. The vertices are given by the elements $\{ (s,l) \}_{s \in G/H \; , \; l=0,1,\ldots,d}$. A $k$-tuple of distinct vertices $(s_1,l_1),(s_2,l_2),\ldots,(s_k,l_k)$ defines a simplex if and only if the numbers $l_1,l_2,\ldots,l_k$ are distinct (which happens if and only if $\prod_{j=1}^{k} f_s^{(l)} \neq 0$). The action of $G$ on $X$, which is determined by its action on the vertices, is given by $g \cdot (s,l) = (g\cdot s,l)$. We denote by $\sigma$ the action of $G$ this defines on $C(X)$.

Thus, we have $\dimrokc(\alpha,H) \leq d$ if and only if there exists an equivariant unital homomorphism from $C(X_{G,H,d})$ to $F(A)$. The same argument as in \cite[Remark 5.6]{SWZ}, shows that for any separable $\alpha$-invariant $C^*$-subalgebra $D$ of $F(A)$, we can arrange for the image of the homomorphism to lie in $F(D,A)$.

\begin{Lemma}
	\label{Lemma:restrict-to-subgroup}
	Let $A$ be a separable $C^*$-algebra. Let $G$ be a residually finite group, and let $\alpha \colon G \to \aut(A)$ be an action which has finite Rokhlin dimension with commuting towers. Suppose $g_0$ is a nontrivial group element. Then the restriction of $\alpha$ to the subgroup generated by $g_0$ also has finite Rokhlin dimension with commuting towers.
\end{Lemma}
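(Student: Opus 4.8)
The plan is to pass to a suitable finite index normal subgroup of $G$, restrict the Rokhlin towers to $\langle g_0\rangle$, and then fold them to realize the finite index subgroups of $\langle g_0\rangle$. Write $H = \langle g_0\rangle$; it is either finite cyclic or isomorphic to $\Z$, and in either case its finite index subgroups are exactly the $K = \langle g_0^n\rangle$. Since $\dimrokc(\alpha)$ is finite, fix $d$ with $\dimrokc(\alpha,L) \le d$ for every finite index subgroup $L \le G$. To establish $\dimrokc(\alpha|_H) \le d$ it suffices, for each finite index $K \le H$, to produce equivariant c.p.c.\ order zero maps $\psi^{(l)} \colon (C(H/K), H\text{-shift}) \to (F(A),\alpha)$, $l = 0,\ldots,d$, with commuting images and $\sum_l \psi^{(l)}(1) = 1$.

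First I would use residual finiteness of $G$ to choose a finite index normal subgroup $N \trianglelefteq G$ for which $H \cap N \subseteq K$; concretely, $N$ should be chosen so that the image of $g_0$ in $G/N$ has order a multiple of $n$ (when $g_0$ has infinite order), or so that $H$ acts freely on $G/N$ (when $g_0$ has finite order $m$, where one simply separates the finitely many nontrivial powers $g_0,\ldots,g_0^{m-1}$ from the identity). Granting such an $N$, I would invoke $\dimrokc(\alpha,N) \le d$ to obtain equivariant order zero maps $\varphi^{(l)} \colon (C(G/N), G\text{-shift}) \to (F(A),\alpha)$ with commuting images and $\sum_l \varphi^{(l)}(1) = 1$, and restrict the $G$-action to $H$. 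As an $H$-space, $G/N$ decomposes into $H$-orbits, and because $N$ is normal each orbit is equivariantly isomorphic to $H/(H\cap N)$; since $H \cap N \subseteq K$, there is an $H$-equivariant surjection $G/N \to H/K$, inducing a unital $H$-equivariant $*$-homomorphism $\iota \colon C(H/K) \to C(G/N)$. Setting $\psi^{(l)} = \varphi^{(l)} \circ \iota$ then gives the required maps: they remain completely positive, contractive and order zero, their images still commute across $l$ (being contained in those of the $\varphi^{(l)}$), they are $H$-equivariant, and $\sum_l \psi^{(l)}(1) = \sum_l \varphi^{(l)}(1) = 1$ since $\iota$ is unital. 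Phrased in terms of conditions (1)--(3), this amounts to folding the height-$[H:H\cap N]$ tower attached to each orbit into a height-$[H:K]$ tower by summing tower elements over cosets of $K/(H\cap N)$, which is legitimate precisely because $[H:K]$ divides $[H:H\cap N]$.

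The main obstacle is the purely group-theoretic step of producing $N$ with $H \cap N \subseteq K$. When $g_0$ has finite order this is immediate from residual finiteness, as indicated above, and the freeness of the resulting action makes the folding transparent. When $g_0$ has infinite order the requirement becomes that the order of $g_0$ in some finite quotient $G/N$ be divisible by $n$; this is where one must exploit the freedom to enlarge $N$ and to intersect finitely many finite index subgroups, and it is the part of the argument I expect to demand the most care. Once towers for every $K$ have been produced with the single bound $d$, we obtain $\dimrokc(\alpha|_H) \le d < \infty$, which is the assertion.
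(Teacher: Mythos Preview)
Your approach is fine when $g_0$ has finite order, and there it essentially coincides with the paper's. The gap is in the infinite-order case: the finite-index normal subgroup $N \trianglelefteq G$ with $H \cap N \subseteq K$ that you ask for need not exist, so the step you flag as ``demanding the most care'' cannot be completed in general.

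For a concrete obstruction, take $G = \Z[1/2]$ (dyadic rationals under addition), which is residually finite, and $g_0 = 1$, so $H = \langle g_0 \rangle = \Z$. Every finite quotient of $G$ has odd order (any homomorphism to a group of even order must kill $1$, since $1 = 2^k \cdot 2^{-k}$), so for any finite-index $N \trianglelefteq G$ the image of $g_0$ in $G/N$ has odd order, i.e.\ $H \cap N = m\Z$ with $m$ odd. Hence for $K = 2\Z$ there is no $N$ with $H \cap N \subseteq K$, and intersecting finitely many such $N$ does not help. Your folding argument therefore never produces height-$2$ towers. The paper avoids this by \emph{not} trying to hit every finite-index $K \leq H$ directly: it observes that the subgroups $H \cap N$ (for $N$ of finite index in $G$) form a \emph{regular approximation} of $H \cong \Z$ in the sense of \cite[Notation~3.6]{SWZ}, and then invokes \cite[Theorem~7.2]{SWZ} together with the fact that the box space of $\Z$ has asymptotic dimension $1$ to pass from Rokhlin towers relative to this approximating family to full finite Rokhlin dimension. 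The cost is a worse bound, $\dimrokc(\alpha|_H) \leq 2\dimrokc(\alpha) + 1$, rather than the $\dimrokc(\alpha|_H) \leq \dimrokc(\alpha)$ your argument would have given had it gone through.
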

\begin{proof}
	We first observe that if $H<G$ is a subgroup of finite index, then there exists an $\left < g_0 \right >$-equivariant embedding $C(\left < g_0 \right > / \left < g_0 \right >\cap H) \to C(G/H)$. If $g_0$ has finite order then choose $H < G$ such that $\left < g_0 \right > \cap H = \{1\}$. Then the group generated by $g_0$ acts freely on $G/H$ and therefore on $X_{G,H,d}$ for $d = \dimrokc(\alpha,H)$. Therefore, by \cite[Lemma 1.9]{hirshberg-phillips}, we have $\dimrokc(\alpha_{g_0}) \leq d$. 
	
	We now assume that $g_0$ has infinite order.
	The set of subgroups of $\left < g_0 \right >$ of the form $H \cap \left < g_0 \right >$ for $H<G$ of finite index form a regular approximation in the sense of \cite[Notation 3.6]{SWZ}. Since the asymptotic dimension of the box space of $\left < g_0 \right >$ is $1$, it follows from the proof of \cite[Theorem 7.2]{SWZ} that the restriction of $\alpha$ to $\left < g_0 \right >$ has finite Rokhlin dimension with commuting towers. In fact, $\dimrokc(\alpha_{g_0}) \leq 2\dimrokc(\alpha)+1$. (The statement of \cite[Theorem 7.2]{SWZ} concerns Rokhlin dimension without necessarily commuting towers, but an inspection of the proof shows that the same result holds for the commuting tower version as well.) 
\end{proof}

The following is a straightforward consequence of \cite[Theorem 1]{Dadarlat-Toms}.
\begin{Lemma}
	\label{Lemma:DT09}
	Let $A$ be a separable $C^*$-algebra. Suppose $F(A)$ contains a unital copy of a subhomogeneous algebra without characters. Then $A \cong A \otimes \mathcal{Z}$.
\end{Lemma}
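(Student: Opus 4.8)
The plan is to reduce the statement to producing a single unital $*$-homomorphism $\Zh \to F(A)$ and then to invoke the central-sequence characterization of $\Zh$-stability: for a separable $C^*$-algebra $A$ one has $A \cong A \otimes \Zh$ if and only if there is a unital $*$-homomorphism $\Zh \to F(A)$. This is the corrected-central-sequence form of Kirchberg's absorption criterion for strongly self-absorbing algebras (valid in the non-unital case as well, which is exactly why I work with $F(A)$ rather than with $A_\omega \cap A'$; the $K_1$-injectivity of $\Zh$ that the criterion formally requires is classical). Thus the entire content is to manufacture such a map from the hypothesis. Note that a subhomogeneous algebra without characters may be finite-dimensional (e.g.\ $M_2$), so it need not itself contain a unital copy of $\Zh$; the role of \cite[Theorem 1]{Dadarlat-Toms} is precisely that passing to an infinite tensor power repairs this.

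Accordingly, let $S \subseteq F(A)$ be the given unital subhomogeneous subalgebra without characters. First I would promote the inclusion to an infinite tensor power. Since $S$ is subhomogeneous it is nuclear, so $S^{\otimes \infty}$ is unambiguously defined, and a standard reindexation argument inside the corrected central sequence algebra --- producing a sequence of mutually commuting unital copies of $S$ in $F(A)$, exactly in the spirit of the manipulations of $F(A)$ in \cite{SWZ} --- upgrades the unital inclusion $S \hookrightarrow F(A)$ to a unital $*$-homomorphism $S^{\otimes \infty} \to F(A)$. On the other hand, \cite[Theorem 1]{Dadarlat-Toms} shows that the infinite tensor power of a subhomogeneous $C^*$-algebra without characters is $\Zh$-stable; equivalently, there is a unital $*$-homomorphism $\Zh \to F(S^{\otimes \infty})$.

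It remains to splice these two maps. For this I would use the functoriality of the $F$-construction under unital central embeddings: a unital $*$-homomorphism $S^{\otimes \infty} \to F(A)$ induces, by a further reindexation (one-sided $\eps$-test) argument, a unital $*$-homomorphism $F(S^{\otimes \infty}) \to F(A)$. Composing this with $\Zh \to F(S^{\otimes \infty})$ yields the desired unital $*$-homomorphism $\Zh \to F(A)$, and hence $A \cong A \otimes \Zh$. The main obstacle, and the only genuinely technical part, lies in these two reindexation steps and in checking that they are compatible with the annihilator quotient defining $F(A)$ in the non-unital setting; in the unital case $F(A) = A_\omega \cap A'$ and the argument is essentially a restatement of \cite[Theorem 1]{Dadarlat-Toms}.
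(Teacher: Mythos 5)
Your overall route is the paper's route: upgrade the unital inclusion $S \hookrightarrow F(A)$ to a unital homomorphism $S^{\otimes \infty} \to F(A)$ (this is exactly \cite[Corollary 1.13]{Kirchberg-abel}, which the paper cites rather than re-proves), apply \cite[Theorem 1]{Dadarlat-Toms} to see that $S^{\otimes \infty}$ is $\Zh$-stable, and conclude with Kirchberg's absorption criterion for separable $A$ (\cite[Proposition 4.4]{Kirchberg-abel}, see also \cite[Proposition 4.1]{HRW}). Those three ingredients are precisely the ones in the paper's proof.

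The problem is your splicing step. From $\Zh$-stability of $S^{\otimes \infty}$ you pass to a unital homomorphism $\Zh \to F(S^{\otimes \infty})$, and then you assert that the unital homomorphism $S^{\otimes \infty} \to F(A)$ ``induces, by a further reindexation argument,'' a unital homomorphism $F(S^{\otimes \infty}) \to F(A)$. The $F$-construction has no such functoriality, and this claim cannot be taken for granted: an element of $F(S^{\otimes \infty})$ is a class of a central sequence in $S^{\otimes \infty}$, and pushing such a sequence forward coordinatewise lands you in an iterated ultrapower of $F(A)$, not in $F(A)$; collapsing back down is a nontrivial matter. Moreover, $F(S^{\otimes \infty})$ is nonseparable, and $\eps$-test/reindexation arguments only produce maps on separable subalgebras, so the statement as you phrased it (a homomorphism defined on all of $F(S^{\otimes \infty})$) is not something these techniques can deliver. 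One could salvage the restricted statement for the separable subalgebra $\Zh \subset F(S^{\otimes \infty})$ by a genuine iterated-reindexation argument, but this is real work --- and entirely unnecessary. Since $S^{\otimes \infty}$ is unital and $\Zh$-stable, $\Zh$ already embeds unitally into $S^{\otimes \infty}$ itself, via $1 \otimes \Zh \subset S^{\otimes \infty} \otimes \Zh \cong S^{\otimes \infty}$; composing $\Zh \hookrightarrow S^{\otimes \infty} \to F(A)$ gives the desired unital homomorphism $\Zh \to F(A)$ directly. This one-line repair (which is what the paper does, noting that a unital $\Zh$-absorbing algebra ``in particular contains a unital copy of $\Zh$'') turns your proposal into exactly the paper's proof.
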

\begin{proof}
	Let $B \subset F(A)$ be a unital subhomogeneous subalgebra without characters. By \cite[Corollary 1.13]{Kirchberg-abel}, $F(A)$ contains a unital copy of the infinite tensor power $B^{\otimes \infty}$. By \cite[Theorem 1]{Dadarlat-Toms}, $B^{\otimes \infty}$ is $\Zh$-absorbing, and in particular, contains a unital copy of $\mathcal{Z}$. By \cite[Proposition 4.4]{Kirchberg-abel} (see also \cite[Proposition 4.1]{HRW}), it follows that $A$ is $\Zh$-absorbing, as required.
\end{proof}

The following fact is presumably well-known, although it seems difficult to track down in the literature. The question of when a crossed product of $C(X)$ by a non-free action of a group is a continuous trace algebra was considered in \cite{Willimas-1981}, however here we need a more precise formulation in our specific case. 
\begin{Prop}
	\label{Prop:subhomogeneous}
	Let $X$ be a locally compact Hausdorff space. Let $h$ be a free action of $\Z_n$ on $X$. Let $\alpha$ be the action of $\Z$ on $C(X)$ given by $\alpha(f) = f \circ h^{-1}$ (noting that $\alpha$ is $n$-periodic). Then $C(X) \rtimes_{\alpha} \Z$ is a locally trivial $n$-homogeneous algebra with spectrum $X/\Z_n \times \T$.
\end{Prop}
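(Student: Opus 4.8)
The plan is to exploit the freeness of $h$ to reduce the computation to a local model over the orbit space $Y := X/\Z_n$, and then to globalize using the canonical central data. First I would record the covering structure. Since $\Z_n$ is finite and acts freely on the Hausdorff space $X$, for each $x \in X$ the points $x, h(x), \ldots, h^{n-1}(x)$ are distinct, and the standard argument of choosing disjoint neighborhoods and intersecting their translates produces an open $V \ni x$ whose translates $V, h(V), \ldots, h^{n-1}(V)$ are pairwise disjoint. Hence $q \colon X \to Y$ is a covering map, $Y$ is locally compact Hausdorff, and each point of $Y$ has an open neighborhood $U$ with $q^{-1}(U) = \bigsqcup_{i=0}^{n-1} V_i$, where $q|_{V_i} \colon V_i \to U$ is a homeomorphism and $h(V_i) = V_{i+1 \bmod n}$.

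Next I would establish the local model. The basic building block is $\C^n \rtimes_{\beta} \Z$, where $\beta$ is the cyclic shift of the coordinates: if $e_0, \ldots, e_{n-1}$ are the minimal projections and $u$ the implementing unitary, then the $e_i$ are cyclically permuted and mutually equivalent, sum to $1$, and the corner $e_0(\C^n \rtimes_\beta \Z)e_0$ is generated by the central unitary $u^n$; a direct computation then gives $\C^n \rtimes_\beta \Z \cong M_n(C(\T))$, with the $\T$-coordinate recording the spectrum of $u^n$. Over a trivializing neighborhood $U$ as above, the $h$-invariant open set $W = q^{-1}(U)$ gives an $\alpha$-invariant ideal $C_0(W) \trianglelefteq C(X)$; identifying $C_0(W) \cong C_0(U) \otimes \C^n$ equivariantly via the homeomorphisms $q|_{V_i}$ (under which $\alpha$ becomes $\id \otimes \beta$, since $q \circ h = q$), I obtain an ideal
$$
C_0(W) \rtimes_\alpha \Z \;\cong\; C_0(U) \otimes (\C^n \rtimes_\beta \Z) \;\cong\; M_n\bigl(C_0(U \times \T)\bigr)
$$
of $A := C(X) \rtimes_\alpha \Z$.

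Finally I would globalize. Covering $Y$ by such trivializing sets $\{U_\lambda\}$ yields ideals $I_\lambda \cong M_n(C_0(U_\lambda \times \T))$ whose sum is dense in $A$ (the sets $q^{-1}(U_\lambda)$ cover $X$). Every irreducible representation of $A$ is nonzero on some $I_\lambda$, and all irreducible representations of $M_n(C_0(U_\lambda \times \T))$ have dimension exactly $n$, so $A$ is $n$-homogeneous, and the local isomorphisms above exhibit it as locally trivial. To pin down the spectrum globally I would compute the center intrinsically: writing an element of $A$ in terms of its Fourier coefficients $\sum_k f_k u^k$, commutation with all of $C(X)$ forces each nonzero $f_k$ to be supported where $h^{-k} = \id$, which by freeness happens only when $n \mid k$; commutation with $u$ then forces $f_{nj} \in C(X)^{\Z_n}$. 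Thus $Z(A) = C(X)^{\Z_n} \otimes C^*(u^n) \cong C_0(Y) \otimes C(\T) = C_0(Y \times \T)$, and for an $n$-homogeneous algebra the spectrum is canonically $\widehat{Z(A)}$, giving $\widehat{A} \cong Y \times \T = X/\Z_n \times \T$.

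The main obstacle is global coherence: the local computation identifies $A$ only up to isomorphism over each $U_\lambda$, so one must verify that the $Y$- and $\T$-coordinates patch into a well-defined homeomorphism of the entire spectrum. This is exactly what the intrinsic center computation resolves, since the invariant functions $C_0(Y)$ and the central unitary $u^n$ are defined on all of $A$ and therefore pin down both coordinates independently of any chosen trivialization; the remaining transition data reduces to that of the covering $q$, while local triviality of $A$ as a matrix bundle is furnished directly by the local isomorphisms $I_\lambda \cong M_n(C_0(U_\lambda \times \T))$.
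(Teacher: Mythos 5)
Your proof is correct, but it follows a genuinely different route from the paper's. The paper gives no self-contained argument: it cites the proof of Proposition 2.1 of Hirshberg--Wu, which treats an arbitrary finite-order homeomorphism and proves subhomogeneity by an orbit-by-orbit analysis of irreducible representations (each irreducible representation lives over a single finite orbit and has dimension equal to the orbit length), and then remarks that freeness forces every orbit to have length exactly $n$, whence homogeneity. You instead exploit freeness from the outset: $X \to X/\Z_n$ is a covering map, the crossed product restricted to a trivializing neighborhood is the explicit model $M_n(C_0(U\times\T))$ via $\C^n \rtimes_{\beta}\Z \cong M_n(C(\T))$, and the local pictures are glued by the global center computation $Z(A) \cong C_0(X/\Z_n \times \T)$. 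The trade-off: the paper's citation also covers non-free actions (yielding subhomogeneity in general, which is all the weaker hypotheses of the cited result can give) and is shorter, while your argument is self-contained and directly establishes precisely the two assertions of the proposition --- local triviality and the homeomorphism type of the spectrum --- which the paper only extracts implicitly by inspecting the cited proof. Two routine points you should make explicit: for noncompact $X$ one must read $C_0(X)$ throughout, and $u^n$ is then only a multiplier, so $Z(A)$ should be described as the closed span of the elements $f u^{nj}$ with $f \in C_0(X)^{\Z_n}$ (still isomorphic to $C_0(X/\Z_n \times \T)$, e.g.\ via the faithful conditional expectation onto $C_0(X)$); and the identification $\widehat{A} \cong \widehat{Z(A)}$ rests on the standard fact that an $n$-homogeneous algebra is the section algebra of an $M_n$-bundle over its Hausdorff spectrum, so that its center is exactly the algebra of scalar sections vanishing at infinity.
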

For a proof, we refer the reader to the proof of \cite[Proposition 2.1]{Hirshberg-Wu}. The statement of \cite[Proposition 2.1]{Hirshberg-Wu} says that the crossed product in question is subhomogeneous, not homogeneous, since the assumptions are weaker. However, the proof shows that the possible dimensions of irreducible representations are precisely the lengths of the orbits which occur. Therefore, if all orbits are of the same size, the resulting crossed product is in fact homogeneous.

The following lemma is a straightforward consequence of a simple reindexation trick (or the Kirchberg $\eps$-test), and we omit the proof.
\begin{Lemma}
	\label{Lemma:reindexation}
  Let $A$ be a separable $C^*$-algebra, and let $\alpha$ be an automorphism of $A$. 
  \begin{enumerate}
 	\item If $D$ is a unital $C^*$-algebra and there exists a unital homomorphism $D \to F(A)$ then for any separable subspace $S \subset F(A)$ there exists a unital homomorphism $D \to F(A) \cap S'$.
 	\item If $\alpha$ is approximately inner then for any separable subspace $S \subset A_{\omega}$ there exists a unitary $u \in A_{\omega}^+$ such that $uxu^* = \alpha(x)$ for all $x \in S$. 
  \end{enumerate}
\end{Lemma}

We discussed the case of discrete group actions above. For the case of flows, that is, actions of $\R$, we refer the reader to \cite[Definition 6.1]{HSWW}.

\begin{Thm}
	\label{Thm:Z-absorption}
	Let $A$ be a separable $C^*$-algebra. Let $G$ be a residually finite group or $G = \R$, and let $\alpha \colon G \to \aut(A)$ be an action which has finite Rokhlin dimension with commuting towers. Suppose there exists $ g_0 \neq 1$ such that $\alpha_{g_0}$ is approximately inner. Then $A \cong A \otimes \Zh$.
\end{Thm}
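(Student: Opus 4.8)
The plan is to reduce to the case $G=\langle g_0\rangle \cong \Z$ (or a finite cyclic group) and then exhibit a unital subhomogeneous algebra without characters inside $F(A)$, so that Lemma~\ref{Lemma:DT09} applies. By Lemma~\ref{Lemma:restrict-to-subgroup}, the restriction of $\alpha$ to $H_0 := \langle g_0\rangle$ still has finite Rokhlin dimension with commuting towers, so I may as well assume $G = H_0$ and forget the rest of the group. (The $\R$ case needs separate but analogous handling, presumably reducing to a single automorphism via the universal-space machinery for flows from \cite{HSWW}; I will concentrate on the discrete case.) If $g_0$ has finite order $n$, then $\alpha_{g_0}$ generates a $\Z_n$-action, and I pick a finite-index $H$ with $\langle g_0\rangle\cap H=\{1\}$; if $g_0$ has infinite order I work with $\Z$ directly.

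Next I want to feed the universal space into $F(A)$. For a suitable finite-index subgroup $H$ we have, by the discussion preceding Lemma~\ref{Lemma:restrict-to-subgroup}, a $G$-equivariant unital homomorphism $C(X_{G,H,d})\to F(A)$, where $d=\dimrokc(\alpha,H)$ and the $G$-action $\sigma$ on $C(X_{G,H,d})$ is the one induced by the free simplicial action on the finite-dimensional complex $X_{G,H,d}$. The key geometric point is that the relevant quotient of $G$ acts freely on $X_{G,H,d}$, and freely on a finite-dimensional compact space. Now I bring in the approximate innerness: since $\alpha_{g_0}$ is approximately inner, the induced automorphism of $F(A)$ is \emph{trivial} (approximately inner automorphisms act trivially on $F(A)$; this is exactly what Lemma~\ref{Lemma:reindexation}(2) is designed to give, after lifting the unitary and passing to the relative commutant). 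Concretely, I use Lemma~\ref{Lemma:reindexation} to arrange a unitary $u\in A_\omega^+$ implementing $\alpha_{g_0}$ on a separable subalgebra $D$ containing the image of $C(X_{G,H,d})$, and then pass to $F(D,A)$ where $\alpha_{g_0}$ acts as $\mathrm{Ad}(u)$, i.e. innerly.

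With the image of the shift action landing in $F(A)$ but $\alpha_{g_0}$ acting innerly (hence, on the level of $F(\cdot)$, trivially), I form the crossed product. The cyclic group $\langle g_0\rangle$ acts on $X_{G,H,d}$ freely, so by Proposition~\ref{Prop:subhomogeneous} the crossed product $C(X_{G,H,d})\rtimes_\sigma \langle g_0\rangle$ (in the finite-order case, $C(X)\rtimes_\sigma \Z_n$, a locally trivial $n$-homogeneous algebra with spectrum $X/\Z_n\times\T$) is subhomogeneous; moreover, because it is genuinely $n$-homogeneous with $n>1$ (the orbits have size $n\geq 2$), it has \emph{no characters}. The fact that the generator acts innerly on $F(A)$ allows me to map this crossed product (or a unital corner/the homogeneous algebra it produces) unitally into $F(A)$: the unitary $u$ together with the equivariant copy of $C(X_{G,H,d})$ assembles into a unital homomorphism from the crossed product into $F(A)$, using that $u$ implements the shift. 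Having produced a unital copy of a subhomogeneous algebra without characters inside $F(A)$, Lemma~\ref{Lemma:DT09} immediately yields $A\cong A\otimes\Zh$.

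The main obstacle I expect is the bookkeeping at the junction of the two structures: reconciling the equivariant $*$-homomorphism from $C(X_{G,H,d})$ with the inner implementation of $\alpha_{g_0}$ so that they fit together into a single unital homomorphism out of the crossed product, while staying inside $F(A)$ rather than merely $A_\omega$. The subtlety is that $C(X_{G,H,d})$ maps into $F(A)=A_\omega\cap A'/\mathrm{Ann}$, whereas the unitary $u$ lives in $A_\omega$ and need not centralize $A$; I must choose $u$ to implement $\alpha_{g_0}$ on a large enough separable set and relocate everything into $F(D,A)$ for an appropriate invariant $D$, checking that the covariance relation $u\,\sigma_{g_0}(f)\,u^* = f$ holds in the quotient. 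Verifying that the resulting algebra is honestly homogeneous (equal orbit sizes, hence characterless) rather than merely subhomogeneous, and that the infinite-order case reduces cleanly to a finite quotient where Proposition~\ref{Prop:subhomogeneous} applies, is where the care is needed; the $\R$ case will require the parallel universal-space results and a separate but conceptually identical argument.
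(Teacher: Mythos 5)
Your overall strategy is the paper's: reduce to the cyclic subgroup via Lemma~\ref{Lemma:restrict-to-subgroup}, push the universal space into the central sequence algebra, assemble a covariant pair, recognize the algebra it generates as a quotient of an $n$-homogeneous crossed product via Proposition~\ref{Prop:subhomogeneous}, and finish with Lemma~\ref{Lemma:DT09}. But the step you bridge with the claim that ``approximately inner automorphisms act trivially on $F(A)$'' is a genuine gap, and that claim is false. If $u\in A_\omega^+$ is a unitary produced by Lemma~\ref{Lemma:reindexation}(2), then $\mathrm{Ad}(u)$ agrees with the induced automorphism $\alpha_\omega$ only on the separable set on which $u$ was chosen to implement $\alpha$; on $A_\omega\cap A'$ the induced action is computed coordinatewise and is in general \emph{not} $\mathrm{Ad}(u)$ and not trivial. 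Indeed, if it were trivial, the theorem would be vacuous: triviality on $F(A)$ together with $\alpha_{g_0}(f^{(l)}_s)=f^{(l)}_{g_0\cdot s}$ would force the pairwise orthogonal tower elements to coincide, hence vanish, contradicting $\sum_{l,s}f^{(l)}_s=1$; yet $\Zh$ itself admits approximately inner automorphisms with finite Rokhlin dimension with commuting towers. Your concrete fallback fails for the same structural reason: the unitary $u$ implementing $\alpha_\omega$ on $D\supseteq A$ satisfies $uau^*=\alpha(a)\neq a$, so $u$ lies in neither $A_\omega\cap A'$ nor $A_\omega\cap D'$; consequently $C^*(\psi(C(X)),u)$ lives only in $A_\omega$, descends to neither $F(A)$ nor $F(D,A)$, and cannot be fed into Lemma~\ref{Lemma:DT09}. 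Enlarging $D$ only makes this worse, and the automorphism of $F(D,A)$ induced by $\mathrm{Ad}(u)$ (which does exist, since $uDu^*=D$) is not the one with respect to which $\varphi$ is equivariant.

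What is missing is a construction of a unitary $w$ that simultaneously commutes with $A$ (so that it defines an element of $F(A)$) and implements the induced action on the image of $C(X_{\Z,n\Z,d})$ modulo $\Ann(A)$; this is exactly the ``main obstacle'' you name, and the paper resolves it with a two-unitary trick that your proposal has no substitute for. Fix $u$ implementing $\alpha$ on the canonical copy of $A$, let $D=C^*\bigl(\alpha^k(au)\colon a\in A,\ k\in\Z\bigr)$ (which contains $A$ and, crucially, the products $au$), choose the equivariant map $\varphi\colon C(X)\to F(D,A)$ with a completely positive lift $\Phi$ into $A_\omega\cap D'$, and then choose a \emph{second} unitary $v$ implementing $\alpha$ on the invariant algebra $E$ generated by $D$ and $\Phi(C(X))$. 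The element $w=$ image of $u^*v$ lies in $F(A)$, and covariance holds because for $x\in\Phi(C(X))$ and $a\in A$ one has $u^*vxv^*ua=u^*\alpha_\omega(x)ua=u^*\alpha_\omega(x)(\alpha(a)u)=u^*(\alpha(a)u)\alpha_\omega(x)=a\,\alpha_\omega(x)$, using that $ua=\alpha(a)u\in D$ commutes with $\alpha_\omega(x)$. A second, smaller error: even when $g_0$ has finite order $n$, the unitary $w$ has no reason to satisfy $w^n=1$, so one only obtains a representation of $C(X)\rtimes_\sigma\Z$ for the $n$-periodic extension of the $\Z_n$-action, not of $C(X)\rtimes_\sigma\Z_n$; this is precisely why Proposition~\ref{Prop:subhomogeneous} concerns periodic $\Z$-actions (spectrum $X/\Z_n\times\T$) and why the paper identifies $X_{\Z_n,d}$ with $X_{\Z,n\Z,d}$, whereas your finite-order branch works with the $\Z_n$-crossed product directly.
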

\begin{proof}
	We may assume that $G = \left < g_0 \right >$, since the restriction to $\left < g_0 \right >$ has finite Rokhlin dimension with commuting towers:	if $G$ is discrete, this is Lemma \ref{Lemma:restrict-to-subgroup}, and if $G = \R$, then this follows by a straightforward modification of \cite[Proposition 5.5]{HSWW} to the commuting tower case.
	We denote $\alpha = \alpha_{g_0}$ and set $d = \dimrokc(\alpha)$.
	
	If $g_0$ has finite order, set $n = |G|$, otherwise fix any natural number $n>1$.  Notice that we can identify $X_{\Z_n,d}$ with $X_{\Z,n\Z,d}$ (the former has a free action of $\Z_n$, which we can extend to a periodic action of $\Z$).
	
	Since $\alpha$ is approximately inner, there exists a unitary $u \in A_{\omega}^+$ such that $uxu^* = \alpha(x)$ for all $x \in A$. Fix such a unitary $u$.

	Let $D \subset A_{\omega}$ be the $C^*$-algebra generated by the elements of the form $\alpha^k(au)$ for all $a \in A$ and for all $k \in \Z$. Choose an equivariant unital homomorphism  $\varphi \colon C(X_{\Z,n\Z,d}) \to F(D,A)$.   We use the Choi-Effros theorem to choose a completely positive lifting $\Phi \colon C(X_{\Z,n\Z,d})  \to A_{\omega} \cap D'$. Let $E$ be the smallest $\alpha$-invariant $C^*$-algebra containing $D$ and the image of $\Phi$. Now, using again the fact that $\alpha$ is approximately inner, we can choose a unitary $v \in  A_{\omega}^+$ such that $vxv^* =\alpha(x)$ for all $x \in E$. 
	
	Notice that $A \subseteq D$, and $u^*v \in A_{\omega}^+ \cap A'$.  Denote by $w$ the image of $u^*v$ in $F(A)$ under the quotient map by $\Ann(A)$. Let $\psi \colon C(X_{\Z,n\Z,d}) \to F(A)$ be the composition of $\varphi$ with the canonical unital homomorphism $F(D,A) \to F(A)$. By construction, we have $w x w^* = \alpha(x)$ for all $x \in \psi( C(X_{\Z,n\Z,d}) )$. As $(\psi,w)$ is a covariant representation of $(C(X_{\Z,n\Z,d}),\sigma)$, it follows that $C^*( C(X_{\Z,n\Z,d}) , w)$ is a quotient of the crossed product $ C(X_{\Z,n\Z,d}) \rtimes_{\sigma} \Z$. Since, by Proposition \ref{Prop:subhomogeneous}, the crossed product  $ C(X_{\Z,n\Z,d}) \rtimes_{\sigma} \Z$ is an $n$-homogeneous $C^*$-algebra, any non-zero quotient is $n$-homogeneous as well. Therefore, by Lemma \ref{Lemma:DT09}, it follows that $A \cong A \otimes \Zh$, as required.
\end{proof}

\begin{Rmk}
	Under the conditions of Theorem \ref{Thm:Z-absorption}, it follows from \cite[Theorem 10.6]{SWZ} under the further assumption that $G$ is countable and has a box space with finite asymptotic dimension and from \cite[Theorem 6.3]{HSWW} for the case $G = \R$ that the crossed product $A \rtimes_{\alpha} G$ absorbs $\Zh$ tensorially as well.
\end{Rmk}
\begin{Question}
	Is the commuting tower condition required in Theorem \ref{Thm:Z-absorption}?  The proof of Theorem \ref{Thm:Z-absorption} suggests the following more specific question. Given $n,d \in \N$, let $A_{n,d}$ be the universal unital $C^*$-algebra given by positive contractions $\{ f_k^{(l)} \mid k=0,1,\ldots,n-1 ; l = 0,1,\ldots,d\}$, subject to the relations:
	\begin{enumerate}
		\item $ f_k^{(l)} f_j^{(l)} = 0$ for all $l = 0,1,\ldots,d$ whenever $k \neq j$.
		\item $\sum_{l=0}^d  \sum_{k=0}^{n-1} f_k^{(l)}  = 1$.
	\end{enumerate}
	We have an $n$-periodic automorphism $\sigma$ of $A_{n,d}$ given by $\sigma( f_k^{(l)} ) = f_{k+1}^{(l)}$ for all applicable indices $k,l$, where we think of addition as taken modulo $n$. Given $d$, is there an $n$ such that the crossed product $A_{n,d} \rtimes \Z$ admits a unital homomorphism from a prime dimension drop algebra?
\end{Question}

\section{The generalized tracial Rokhlin property}
\label{section:tracial results}

In this section, we provide analogous results for those in Section \ref{section Rokhlin dimension and approximate innerness} for actions which have the generalized (or weak) tracial Rokhlin property (\cite[Definitions 5.2 and 6.1]{hirshberg-orovitz}). We restrict our attention to actions of finite groups and of $\Z$, so as to avoid fine distinctions between generalizations of the tracial Rokhlin property which appeared in the literature for more general amenable groups (see for example \cite{gardella-hirshberg}). Our goal in this section is to establish an analogue of Theorem \ref{Thm:Z-absorption} for tracial versions of the Rokhlin property, approximate innerness and $\Zh$-stability. We first recall the definition of tracial $\Zh$-absorption (\cite[Definition 2.1]{hirshberg-orovitz}) and establish some simple lemmas. As usual, we use the notation $\precsim$ to denote Cuntz subequivalence.

\begin{Def}
	\label{def:tZA}
	A unital $C^*$-algebra $A$ is said to be \emph{tracially $\Zh$-absorbing} if $A \neq \C$ and for any $n \in \N$, for any nonzero positive element $a \in A$, for any $\eps>0$ and for any finite set $F \subset A$ there exists an order zero map $\varphi \colon M_n \to A$ such that:
	\begin{enumerate}
	\item $1 - \varphi(1) \precsim a$ ;
	\item For any $x \in M_n$ and for any $y \in F$ we have $\| [\varphi(x),y] \| < \eps \|x\| $ .
	\end{enumerate}
\end{Def}
Our first goal is to show that in Definition \ref{def:tZA}, it suffices to check that the definition holds for some $n \geq 2$, rather than for all $n$.

\begin{Lemma}
\label{Lemma:TZA-large-n}
	Let $A$ be a simple unital stably finite infinite dimensional $C^*$-algebra. Suppose that for any $n_0 \in \N$ there exists $k \geq n_0$ such that for any nonzero positive element $a \in A$, for any $\eps>0$ and for any finite set $F \subset A$ there exists an order zero map $\varphi \colon M_k \to A$ such that:
		\begin{enumerate}
		\item $1 - \varphi(1) \precsim a$ ;
		\item For any $x \in M_k$ and for any $y \in F$ we have $\| [\varphi(x),y] \| < \eps \|x\| $ .
		\end{enumerate}
	Then $A$ is tracially $\Zh$-absorbing.
\end{Lemma}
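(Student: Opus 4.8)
The plan is to verify the defining condition of tracial $\Zh$-absorption (Definition \ref{def:tZA}) for every $n \geq 2$, given that it holds for a cofinal set of matrix sizes. Fix $n$, a nonzero positive element $a \in A$, a tolerance $\eps > 0$ and a finite set $F \subset A$. I would first select $k \geq n$ from the cofinal ``good'' set of matrix sizes and an order zero map $\psi \colon M_k \to A$ with $1 - \psi(1) \precsim a'$ (for an input $a'$ to be chosen below) and $\|[\psi(x),y]\| < \eps\|x\|$ for all $y \in F$. Writing $k = qn + r$ with $0 \leq r < n$, I would compose $\psi$ with the block-diagonal $*$-homomorphism $\iota \colon M_n \to M_k$ determined by $\iota(e_{ij}) = \sum_{t=0}^{q-1} e_{tn+i,\,tn+j}$ (which maps $M_n$ unitally onto a corner of $M_k$, but not onto all of $M_k$), obtaining an order zero map $\varphi = \psi \circ \iota \colon M_n \to A$. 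Since $\iota$ is a genuine homomorphism, $\varphi$ inherits the approximate commutation with $F$, so condition (2) of Definition \ref{def:tZA} is automatic, and the entire difficulty lies in condition (1). Note that if $n \mid k$ then $r = 0$, $\varphi(1) = \psi(1)$, and $1 - \varphi(1) \precsim a' = a$ immediately; thus all of the content is in the case $r \neq 0$.

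For the defect I would compute $1 - \varphi(1) = (1 - \psi(1)) + \psi(p_r)$, where $p_r = \sum_{i=k-r+1}^{k} e_{ii}$ is the rank-$r$ projection omitted by $\iota$. Using the standard Cuntz-semigroup estimate $x + y \precsim x \oplus y$ for positive $x,y$, it suffices to dominate the two summands separately. Since $A$ is simple and infinite dimensional, the hereditary subalgebra generated by $a$ is infinite dimensional, hence contains two nonzero mutually orthogonal positive elements $a_0 \perp a_1$; as these lie in that hereditary subalgebra, $a_0 \oplus a_1 \sim a_0 + a_1 \precsim a$. I would feed $a_0$ as the input $a'$, so that $1 - \psi(1) \precsim a_0$ at once. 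It then remains to arrange $\psi(p_r) \precsim a_1$, whence $1 - \varphi(1) \precsim a_0 \oplus a_1 \precsim a$, as required.

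The main obstacle is precisely the estimate $\psi(p_r) \precsim a_1$. Here $\psi(p_r) = \sum_i \psi(e_{ii})$ is an orthogonal sum of $r < n$ of the $k$ mutually orthogonal, mutually Cuntz-equivalent pieces $\psi(e_{ii})$, whose full orthogonal sum equals $\psi(1) \precsim 1$; thus $r\langle\psi(e_{11})\rangle = \langle\psi(p_r)\rangle$ while $k\langle\psi(e_{11})\rangle \leq \langle 1\rangle$, and simplicity gives $\langle 1\rangle \leq m\langle a_1\rangle$ for some $m$. This is exactly where the freedom to take $k$ \emph{arbitrarily large} is indispensable, since the remainder is forced to be an arbitrarily small fraction of the unit; but I expect the conversion into an honest Cuntz domination to be the delicate point, as the Cuntz semigroup is not cancellative and one cannot simply divide $k\langle\psi(e_{11})\rangle \leq m\langle a_1\rangle$ by $k$. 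The cleanest route I would pursue is to run the whole construction inside the (corrected) central sequence algebra $F(A)$ via a reindexation argument in the spirit of Lemma \ref{Lemma:reindexation}, where $\varphi$ lives in $F(A)$ with $1-\varphi(1)\precsim a$, and to exploit the abundance of orthogonal equivalent pieces together with simplicity to absorb the vanishing remainder; as a fallback one takes $k$ very large and replaces $a_1$ by a suitably small orthogonal sub-element before comparing the $r$-fold sum. Once $\psi(p_r) \precsim a_1$ is secured, the assembly above verifies Definition \ref{def:tZA} for all $n$, so $A$ is tracially $\Zh$-absorbing.
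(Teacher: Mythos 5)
Your construction is the same as the paper's (compose with a block-diagonal embedding $M_n \to M_k$, split the defect as $(1-\psi(1)) + \psi(p_r)$, and dominate the two summands separately by two orthogonal nonzero positive elements $a_0 \perp a_1$ below $a$), and you have correctly located the crux: dominating the leftover piece $\psi(p_r)$, i.e.\ turning the fact that $r/k$ is small into an honest Cuntz subequivalence. But at exactly that point your proposal has a genuine gap, and neither of your suggested routes closes it. Reindexation into $F(A)$ buys you commutation, not comparison: the problem of showing that a sum of $r$ out of $k$ equivalent orthogonal pieces is dominated by a fixed nonzero element is unchanged when transplanted into the central sequence algebra. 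The ``fallback'' of shrinking $a_1$ goes in the wrong direction, and the inequality $k\langle\psi(e_{11})\rangle \leq \langle 1\rangle \leq m\langle a_1\rangle$ from simplicity cannot be divided by $k$ precisely because, as you note, $\Cu(A)$ need not be almost unperforated. In a general simple stably finite algebra (e.g.\ Villadsen-type algebras with perforation), an element whose dimension functions are all less than $r/k$ need \emph{not} be subequivalent to a fixed nonzero positive element, so without an additional input the step simply fails.

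The missing idea, which is how the paper proceeds, is that the hypotheses of the lemma \emph{already imply strict comparison}: the proof of \cite[Theorem 3.3]{hirshberg-orovitz} (tracial $\Zh$-absorption implies strict comparison) only uses order zero maps from $M_k$ for arbitrarily large $k$ satisfying conditions (1) and (2), so it runs verbatim under the a priori weaker hypotheses here. Once strict comparison is available, the argument closes exactly as you set it up, but with the quantifiers in the right order: first set $\delta = \min_{\tau \in QT(A)} d_{\tau}(a_0) > 0$ (the minimum exists and is positive by compactness of $QT(A)$, lower semicontinuity of $\tau \mapsto d_\tau(a_0)$, and simplicity), \emph{then} choose $n_0$ with $n/n_0 < \delta$ and a good $k \geq n_0$ with $1-\psi(1) \precsim a_1$. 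The remainder $\psi(p_r)$ satisfies $d_\tau(\psi(p_r)) \leq r/k < n/n_0 < \delta \leq d_\tau(a_0)$ for every quasitrace $\tau$, and strict comparison converts this into $\psi(p_r) \precsim a_0$, giving $1 - \varphi(1) \precsim a_1 \oplus a_0 \precsim a$. So your skeleton is right, but the lemma is not provable by ``abundance of pieces plus simplicity'' alone; the derivation of strict comparison from the hypotheses is the essential content.
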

\begin{proof}
The same arguments as in \cite{hirshberg-orovitz} show that the conclusions of \cite[Theorem 3.3]{hirshberg-orovitz} hold under the a-priori weaker hypotheses of this lemma, namely, $A$ has strict comparison. Fix $n \in \N$, $\eps>0$, $a \in A$ positive and non-zero, and $F \subset A$ finite. We want to find an order zero map $\psi \colon M_n \to A$ satisfying the conditions of \ref{def:tZA}. Find two non-zero orthogonal positive contractions $a_0,a_1 \leq a$ (whose existence follows, for example, from \cite[Proposition 4.10]{kirchberg-rordam-non-simple}). Denoting by $QT(A)$ the set of normalized quasitraces on $A$, set $\delta = \min_{\tau \in QT(A)}d_{\tau}(a_0) > 0$. Fix $n_0 \in \N$ such that $n/n_0 < \delta$. Pick $k \geq n_0$ and an order zero map  $\varphi \colon M_k \to A$ satisfying the conditions of the lemma with $a_1$ in place of $a$. Write $k = nl+m$ where $0 \leq m < n$. Let $\pi \colon M_k \to M_n$ be a block diagonal embedding of multiplicity $l$, and let $\psi = \varphi \circ \pi$. For any quasitrace $\tau$, we have $d_{\tau}(\varphi(1_{M_k}) - \psi(1_{M_n}) ) \leq m/k < n/n_0 < \delta$. Thus 
	\begin{align*}
	1-\psi(1_{M_n}) & = \left ( 1-\varphi(1_{M_k}) \right ) +  \left ( \varphi(1_{M_k}) - \psi(1_{M_n}) \right ) \\
	&\precsim \left ( 1-\varphi(1_{M_k}) \right ) \oplus  \left ( \varphi(1_{M_k}) - \psi(1_{M_n}) \right ) \precsim a_0 + a_1 .
	\end{align*}
The condition $\| [\psi(x),y] \| < \eps \|x\| $ for all $x \in M_n$ and for all $y \in F$ follows immediately from the definition.
\end{proof}

\begin{Lemma}
\label{Lemma:two-commuting-order-zero-maps}
	Let $A$ be a $C^*$-algebra. Let $\varphi \colon M_n \to A$ and $\psi \colon M_k \to A$ be two order zero contractions with commuting ranges. Then there exists an order zero map $\theta \colon M_n \otimes M_k \to C^*(\varphi(M_n),\psi(M_k)) \subseteq A$ with $\theta(1) = \varphi(1)\psi(1)$. 
\end{Lemma}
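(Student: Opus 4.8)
The plan is to build the tensor-product order zero map $\theta$ from the structure theorem for order zero maps (Winter--Zacharias). Recall that an order zero contraction $\varphi \colon M_n \to A$ corresponds to a $*$-homomorphism $\rho_\varphi \colon M_n \to \mathcal{M}(C^*(\varphi(M_n)))$ together with a positive contraction $h_\varphi = \varphi(1)$ in the center of the multiplier algebra commuting with the range, so that $\varphi(x) = h_\varphi \rho_\varphi(x)$; similarly for $\psi$ with $\rho_\psi$ and $h_\psi = \psi(1)$. The natural candidate is to set $\theta(x \otimes y) = h_\varphi h_\psi \, \rho_\varphi(x) \rho_\psi(y)$, or equivalently $\theta = \varphi \cdot \psi$ in the sense that on elementary tensors $\theta(x \otimes y) = \varphi(x)\psi(y)$ when the factors are handled correctly. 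The key point I want to exploit is that $\varphi(M_n)$ and $\psi(M_k)$ commute, so the two supporting $*$-homomorphisms have commuting ranges and $h_\varphi, h_\psi$ commute with everything in sight.

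First I would invoke the Winter--Zacharias structure theorem to obtain $\rho_\varphi, \rho_\psi, h_\varphi, h_\psi$ as above, all living inside the multiplier algebra of $C^*(\varphi(M_n), \psi(M_k))$. Since the ranges of $\varphi$ and $\psi$ commute, one checks that the ranges of $\rho_\varphi$ and $\rho_\psi$ commute as well (this uses that $h_\varphi$ is a strictly positive-type element supporting $\rho_\varphi$, so commutation of $\varphi(x)\psi(y)$ with $\psi(y)\varphi(x)$ propagates to the $\rho$'s on the relevant hereditary subalgebra). Commuting $*$-homomorphisms out of $M_n$ and $M_k$ combine into a single $*$-homomorphism $\rho \colon M_n \otimes M_k \to \mathcal{M}(C^*(\varphi(M_n),\psi(M_k)))$ by $\rho(x \otimes y) = \rho_\varphi(x)\rho_\psi(y)$, using the universal property of the (minimal = maximal, as these are matrix algebras) tensor product. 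Then I would set $\theta(z) = h_\varphi h_\psi\, \rho(z)$ and verify it is order zero with $\theta(1) = h_\varphi h_\psi = \varphi(1)\psi(1)$, which is exactly the claimed normalization.

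The routine verifications are that $\theta$ is completely positive and contractive (immediate since $h_\varphi h_\psi$ is a positive contraction commuting with the range of $\rho$, and $\rho$ is a $*$-homomorphism), and that $\theta$ preserves orthogonality, i.e.\ $zz' = 0$ in $M_n \otimes M_k$ implies $\theta(z)\theta(z') = 0$; this follows because $\rho$ is multiplicative and $h_\varphi h_\psi$ is central relative to the range. One also confirms the image lands in $C^*(\varphi(M_n),\psi(M_k))$: elements of the form $h_\varphi h_\psi \rho_\varphi(x)\rho_\psi(y) = \varphi(x)\psi(y)$ generate this algebra, and $\theta$ maps $M_n \otimes M_k$ onto their closed span.

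The main obstacle I anticipate is the commutativity-propagation step: deducing that $\rho_\varphi$ and $\rho_\psi$ have commuting ranges purely from the hypothesis that $\varphi$ and $\psi$ do. The supporting homomorphisms live in the multiplier algebra, and while $\varphi(x) = h_\varphi \rho_\varphi(x)$ commutes with $\psi(y) = h_\psi \rho_\psi(y)$, stripping off the positive supporting elements $h_\varphi, h_\psi$ to conclude $\rho_\varphi(x)\rho_\psi(y) = \rho_\psi(y)\rho_\varphi(x)$ requires care, since these elements need not be invertible. The clean way around this is to work on the hereditary subalgebra generated by $h_\varphi$ (respectively $h_\psi$), where $\rho_\varphi(x)$ is approximated by $h_\varphi^{1/m}\rho_\varphi(x)h_\varphi^{1/m}$-type expressions lying in $C^*(\varphi(M_n))$, and then use continuity together with the commutation of the images to pass the commutation relation up to $\rho$. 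Alternatively, one can give a direct construction of $\theta$ on the algebraic tensor product by $\theta(x\otimes y) = \varphi(x)\psi(y)$, check it is well-defined and completely positive using that $\varphi \otimes \psi$ followed by multiplication is the composition of the order zero map $\varphi \otimes \psi \colon M_n \otimes M_k \to A \otimes A$ with the multiplication map $A \otimes A \to A$ restricted to the commuting subalgebras (which is a $*$-homomorphism on the commuting copies), thereby sidestepping the multiplier-algebra subtleties entirely; I would likely present this second, more self-contained route as the cleaner argument.
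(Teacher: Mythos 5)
Your proposal is correct in substance, but both of your routes differ from the paper's argument, which stays entirely inside the cone picture: the paper forms $D = \ker(CM_n^+ \otimes CM_k^+ \to \C)$ --- the universal $C^*$-algebra generated by a commuting pair of order zero contractions --- identifies it concretely as matrix-valued functions on $[0,1]^2 \setminus \{(0,0)\}$ with boundary conditions, obtains from universality a homomorphism $\pi \colon D \to A$ carrying the two canonical maps $\varphi_0,\psi_0$ to $\varphi,\psi$, and then exhibits an explicit order zero map $\theta_0 \colon M_n \otimes M_k \to J$, $\theta_0(z)(t,s) = ts\,z$, into the ideal $J \cong C_0((0,1]^2)\otimes M_n \otimes M_k$ generated by $h = \varphi_0(1)\psi_0(1)$; the desired map is $\pi \circ \theta_0$. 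Your first route (combining the supporting homomorphisms from the Winter--Zacharias structure theorem) is genuinely different and can be completed. Your second route is essentially the paper's proof in different packaging: the one nontrivial ingredient it invokes --- that the tensor product of two order zero maps is again order zero --- is most easily proved exactly by the factorization through $C_0((0,1]^2)\otimes M_n \otimes M_k$ that the paper writes down (or else by the structure theorem you were trying to avoid), so it does not really sidestep the subtleties, it relocates them. What the paper's formulation buys is that every verification happens in a concrete algebra of matrix-valued functions, with no biduals, multiplier algebras, or tensor-product technology; what your formulation buys is that the defining formula $\theta(x \otimes y) = \varphi(x)\psi(y)$ is put front and center rather than derived after the fact.

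Two details in your write-up need repair before either route is a proof. First, the commutation-propagation step in route 1 cannot run on norm continuity: the approximants $h_\varphi^{2/m}\rho_\varphi(x)$ do lie in $C^*(\varphi(M_n))$ and hence commute with $\psi(M_k)$, but they converge to $\rho_\varphi(x)$ only strictly (equivalently, $\sigma$-strongly in $A^{**}$), never in norm unless $0$ is isolated in the spectrum of $h_\varphi$. The clean fix is to take all supporting homomorphisms with values in $A^{**}$ and observe that since $C^*(\varphi(M_n))$ and $C^*(\psi(M_k))$ commute elementwise, so do their weak closures in $A^{**}$ (commutants are weakly closed), and $\rho_\varphi,\rho_\psi$ take values in those respective closures; separate weak$^*$-continuity of multiplication then does what your word ``continuity'' was meant to do. Second, in route 2 there is no multiplication homomorphism on $A \otimes A$; the multiplication map is a $*$-homomorphism on $C^*(\varphi(M_n)) \otimes_{\max} C^*(\psi(M_k))$ by the universal property of the maximal tensor product applied to commuting ranges (and $\max = \min$ here, since these algebras are quotients of cones over matrix algebras, hence nuclear), so you must route $\varphi \otimes \psi$ through that algebra rather than through $A \otimes A$.
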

\begin{proof}
	We use the correspondence between order zero maps and homomorphisms from cones; see \cite{winter-zacharias-order-zero}. Let $D$ be the kernel of the canonical homomorphism $CM_n^+ \otimes CM_k^+ \to \C$. We can identify $D$ with the $C^*$-algebra 
	\[ \{f \in C_0([0,1]^2\smallsetminus\{(0,0)\},M_n \otimes M_k) \mid  f([0,1]\times\{0\}) \subseteq M_n \otimes 1_k,  f(\{0\} \times [0,1]) \subseteq 1_n \otimes M_k\} .
	\]
	Define $\varphi_0 \colon M_n \to D$ by $\varphi_0(a)(t,s) = t a \otimes 1_k$ and likewise set define $\psi_0 \colon M_n \to D$ by $\psi_0(a)(t,s) = s 1_n \otimes a$.
	Then there exists a homomorphism $\pi \colon D \to A$ such that $\varphi = \pi \circ \varphi_0$ and $\psi = \pi \circ \psi_0$. Let $h = \varphi_0(1)\psi_0(1)$, that is, $h(t,s) = ts 1_n \otimes 1_k$. Let $J \lhd D$ be the ideal generated by $h$. Note that $J \cong C_0( (0,1]^2) ) \otimes M_n \otimes M_k$. Thus, one can choose an order zero map $\theta_0 \colon M_n \otimes M_k \to J$ with $\theta_0 = h$. The map $\theta = \pi \circ \theta_0$ satisfies the conclusion. 
\end{proof}

\begin{Lemma}
	\label{lemma:perturb-order-zero-map}
	Let $A$ be a $C^*$-algebra, and let $\varphi \colon M_n \to A$ be an order zero map. Then for any $1>\eps>0$ there exists an order zero map $\psi \colon M_n \to A$ such that $1-\psi(1) = \frac{1}{1-\eps}(1-\varphi(1)-\eps)_+$ and $\|\varphi - \psi \|<\eps$. 
\end{Lemma}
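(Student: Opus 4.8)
The plan is to invoke the structure theory of order zero maps from \cite{winter-zacharias-order-zero}: a completely positive contractive order zero map $\varphi \colon M_n \to A$ is the same thing as a $*$-homomorphism $\rho \colon C_0((0,1]) \otimes M_n \to A$, related by $\varphi(x) = \rho(\iota \otimes x)$, where $\iota \in C_0((0,1])$ denotes the function $\iota(t) = t$. In particular $h := \varphi(1) = \rho(\iota \otimes 1)$, and applying $\rho$ to functions of $\iota$ implements the continuous functional calculus of $h$. The idea is simply to rescale $\varphi$ by an appropriate function of $h$, which in the cone picture means replacing $\iota$ by another function.

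First I would define the continuous function $f \colon [0,1] \to [0,1]$ by $f(t) = 1 - \tfrac{1}{1-\eps}(1 - t - \eps)_+$ and observe that it simplifies to $f(t) = \min\{\, t/(1-\eps),\, 1\,\}$. Hence $f$ is continuous, $0 \le f \le 1$, and $f(0) = 0$, so $f \in C_0((0,1])$ is a positive contraction. Consequently $\psi(x) := \rho(f \otimes x)$ is again a completely positive contractive order zero map $M_n \to A$ (this is precisely the functional-calculus operation on order zero maps). The first assertion is then immediate from functional calculus: $\psi(1) = \rho(f \otimes 1) = f(h)$, so
\[
1 - \psi(1) = 1 - f(h) = \tfrac{1}{1-\eps}(1 - h - \eps)_+ = \tfrac{1}{1-\eps}(1 - \varphi(1) - \eps)_+ .
\]

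For the norm estimate I would write $(\varphi - \psi)(x) = \rho\big((\iota - f) \otimes x\big)$ and, using that the $*$-homomorphism $\rho$ is contractive, bound $\|(\varphi - \psi)(x)\| \le \|(\iota - f) \otimes x\| = \|\iota - f\|_\infty \,\|x\|$. A direct scalar computation shows $\|\iota - f\|_\infty = \eps$, attained at $t = 1-\eps$, which yields $\|\varphi - \psi\| \le \eps$. The only genuinely delicate point — and the step I would flag as the main subtlety rather than a real obstacle — is this norm bound: the construction produces distance exactly $\|\iota - f\|_\infty = \eps$, and in fact any $\psi$ with the prescribed value $\psi(1) = f(h)$ satisfies $\|\varphi - \psi\| \ge \|\varphi(1) - \psi(1)\| = \|(\iota - f)(h)\|$, which equals $\eps$ as soon as $1 - \eps$ lies in the spectrum of $\varphi(1)$. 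Thus the estimate is sharp; strict inequality is available only when $\varphi(1)$ has a spectral gap at $1-\eps$, so I would expect the intended bound to be read as $\le \eps$.
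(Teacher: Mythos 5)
Your proof is correct and is essentially the paper's own argument: the paper invokes the structure theorem in its supporting-homomorphism form --- $\pi \colon M_n \to A^{**} \cap \varphi(1)'$ with $\varphi(x) = \pi(x)\varphi(1)$ --- and sets $\psi(x) = \pi(x)\, f(\varphi(1))$ for the very same function $f(t) = \min\{1, t/(1-\eps)\}$ (misprinted there as a $\max$), which under the standard correspondence between cone homomorphisms and supporting homomorphisms is exactly your $\psi(x) = \rho(f \otimes x)$. Your closing remark is also well taken: both constructions yield only $\|\varphi - \psi\| \le \eps$, and since any $\psi$ with the prescribed value of $\psi(1)$ satisfies $\|\varphi - \psi\| \ge \|(\iota - f)(\varphi(1))\|$, the strict inequality claimed in the lemma is unattainable whenever $1-\eps$ lies in the spectrum of $\varphi(1)$; this discrepancy is harmless, since in the lemma's only application (Proposition \ref{prop:conditions-for-TZA}) the non-strict bound suffices (or one simply applies the lemma with a slightly smaller tolerance).
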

\begin{proof}
	Recall (\cite[Corollary 4.2]{winter-zacharias-order-zero}) that there exists a homomorphism $\pi \colon M_n \to A^{**} \cap \varphi(1)'$ such that $\varphi(x) = \pi(x) \varphi(1)$ for all $x \in M_n$. Define $f \colon [0,1] \to [0,1]$ by $f(t) = \max\{1,t/(1-\eps)\}$. Define $\psi(x) = \pi(x) f(\varphi(1))$. It is readily verified that $\psi$ satisfies the required conclusion.
\end{proof}

We recall the following two lemmas.
\begin{Lemma} \cite[Lemma 2.7]{AGP2019}
\label{Lemma:bounded-Cuntz-equivalence}
	Let $A$ be a $C^*$-algebra. Let $a, b \in A_+$, and let $\delta > 0$. If $b \precsim (a-\delta)_+$
	then there exists a sequence $w_n \in A$, $n = 1,2,3,\ldots$ such that $w_n a w_n^* \to b$ and
	$ \| w_n \| \leq \sqrt { \| b \| / \delta  } $ for every $ n \in \N$.
\end{Lemma}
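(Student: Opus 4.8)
The plan is to start from the definition of Cuntz subequivalence and then convert an implementing sequence for $b \precsim (a-\delta)_+$ into one for $b \precsim a$ by absorbing a suitable function of $a$ into the implementing elements. Concretely, the hypothesis $b \precsim (a-\delta)_+$ furnishes a sequence $s_n \in A$ with $s_n (a-\delta)_+ s_n^* \to b$. The first key step is to record the functional-calculus identity $(a-\delta)_+ = g(a)\, a\, g(a)$, where $g$ is the continuous function on $[0,\|a\|]$ given by $g(t) = \sqrt{(t-\delta)_+/t}$ for $t>0$ and $g(0)=0$; one checks $g(t)^2 t = (t-\delta)_+$ pointwise, and continuity at $t=\delta$ is clear since $(t-\delta)_+$ vanishes to the left. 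Setting $w_n = s_n g(a)$ then gives $w_n a w_n^* = s_n (a-\delta)_+ s_n^* \to b$, so the convergence part of the conclusion is immediate.

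The norm bound comes from a second observation, namely the pointwise inequality $g(t)^2 = (t-\delta)_+/t \leq \delta^{-1}(t-\delta)_+$, which is valid because $t^{-1} \leq \delta^{-1}$ on the support $\{t>\delta\}$ of $(t-\delta)_+$. By functional calculus this yields $g(a)^2 \leq \delta^{-1}(a-\delta)_+$ as positive elements, and since conjugation $x \mapsto s_n x s_n^*$ preserves the order on self-adjoints, we obtain $w_n w_n^* = s_n g(a)^2 s_n^* \leq \delta^{-1}\, s_n (a-\delta)_+ s_n^*$. Taking norms gives $\|w_n\|^2 = \|w_n w_n^*\| \leq \delta^{-1}\|s_n (a-\delta)_+ s_n^*\|$, and since $\|s_n (a-\delta)_+ s_n^*\| \to \|b\|$, this produces the desired bound $\sqrt{\|b\|/\delta}$ in the limit.

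The one genuine subtlety, and the step I expect to require the most care, is that the lemma demands $\|w_n\| \leq \sqrt{\|b\|/\delta}$ for \emph{every} $n$, whereas the estimate above only controls the limit superior: individual terms with $\|s_n (a-\delta)_+ s_n^*\| > \|b\|$ could overshoot. I would remedy this by a harmless rescaling: set $\lambda_n = \min\{1, (\|b\|/\|s_n(a-\delta)_+ s_n^*\|)^{1/2}\}$, taking $w_n=0$ whenever $s_n(a-\delta)_+ s_n^* = 0$, and replace $w_n$ by $\lambda_n s_n g(a)$. Since $\lambda_n \to 1$, the convergence $w_n a w_n^* = \lambda_n^2 s_n(a-\delta)_+ s_n^* \to b$ is preserved, while $\|w_n\|^2 \leq \delta^{-1}\lambda_n^2 \|s_n(a-\delta)_+ s_n^*\| \leq \delta^{-1}\|b\|$ now holds for all $n$. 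Everything else is routine functional calculus, so I expect the choice of the function $g$ together with its pointwise domination by $\delta^{-1}(a-\delta)_+$ to be the only nontrivial ingredients.
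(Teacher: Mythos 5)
Your proof is correct, and each step checks out: the function $g(t)=\sqrt{(t-\delta)_+/t}$ (with $g(0)=0$) is continuous on the spectrum of $a$, the functional-calculus identities $g(a)\,a\,g(a)=(a-\delta)_+$ and $g(a)^2\le\delta^{-1}(a-\delta)_+$ hold pointwise, conjugation by $s_n$ preserves the order on self-adjoint elements, and your rescaling by $\lambda_n$ legitimately converts the asymptotic estimate into the bound $\|w_n\|\le\sqrt{\|b\|/\delta}$ for \emph{every} $n$, which is exactly the subtlety the statement demands. (Two harmless degeneracies you gloss over: if $b=0$ then $\lambda_n$ need not tend to $1$, but then all $w_n=0$ and the conclusion is trivial; and if $s_n(a-\delta)_+s_n^*=0$ for some indices while $b\neq 0$, this happens only finitely often, so setting $w_n=0$ there does not spoil convergence.)

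Note, however, that the paper itself contains no proof of this lemma to compare against --- it is quoted verbatim from \cite[Lemma 2.7]{AGP2019} --- so your argument can only be measured against the one in that reference. The standard route there differs from yours in how it secures the uniform norm bound: rather than rescaling, one first upgrades the approximate relation $\|s_n(a-\delta)_+s_n^*-b\|<1/n$ to an exact one, using the Kirchberg--R{\o}rdam lemma (recorded in this paper as Lemma \ref{lemma:bouding-minus-epsilon}) to obtain contractions $d_n$ with $d_n s_n(a-\delta)_+s_n^* d_n^*=(b-1/n)_+$, and then sets $w_n=d_n s_n g(a)$ with the same function $g$ you use. This yields the exact identity $w_n a w_n^*=(b-1/n)_+\to b$ together with $\|w_n\|^2\le\delta^{-1}\|(b-1/n)_+\|\le\delta^{-1}\|b\|$ on the nose, with no rescaling needed. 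Your version is marginally more self-contained, since it invokes nothing beyond the definition of $\precsim$ and functional calculus; the exact-equality version is the one that generalizes most smoothly when one needs $w_n a w_n^*$ to equal a prescribed cut-down of $b$ rather than merely converge to $b$. Either way, your proof is complete.
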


\begin{Lemma} \cite[Lemma 2.2]{kirchberg-rordam-non-simple}
	\label{lemma:bouding-minus-epsilon}
Let $A$ be a $C^*$-algebra. Let $a,b \in A_+$, and let $\eps>0$. If $\|a-b\|<\eps$ then there exists a contraction $d \in A$ such that $dbd^* = (a-\eps)_+$.
\end{Lemma}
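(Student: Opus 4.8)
The plan is to reduce to the unital case and extract from the norm bound a single positivity inequality that does all the work. Adjoining a unit if necessary, the hypothesis $\|a-b\|<\eps$ gives $a-b\le\|a-b\|\cdot 1<\eps\cdot 1$, hence $b\ge a-\eps\cdot 1$; writing $\delta=\eps-\|a-b\|>0$ I would record the slightly stronger $b\ge(a-\eps\cdot 1)+\delta\cdot 1$, since it is this strict gap $\delta$ that will allow producing an honest contraction rather than merely an approximating sequence.

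Set $e=(a-\eps)_+$. The key computation is a functional-calculus identity inside $C^*(a)$: because $(t-\eps)_+(t-\eps)=(t-\eps)_+^2$ for all $t\ge 0$, one has $e^{1/2}(a-\eps\cdot 1)e^{1/2}=e(a-\eps\cdot 1)=e^2$. Conjugating $b\ge(a-\eps\cdot 1)+\delta\cdot 1$ by $e^{1/2}$ therefore yields, for $c:=e^{1/2}be^{1/2}$, the inequality $c\ge e^2+\delta e$. Combined with the trivial bound $c=e^{1/2}be^{1/2}\le\|b\|e$, this sandwiches $c$ between two multiples of $e$, so that $c$ and $e$ share the same support (range) projection $p$.

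With $c$ in hand the contraction is $d=e^{1/2}c^{-1/2}e^{1/2}$, where $c^{-1/2}$ denotes the generalized inverse supported on $p$. Formally $dbd^*=e^{1/2}c^{-1/2}(e^{1/2}be^{1/2})c^{-1/2}e^{1/2}=e^{1/2}c^{-1/2}c\,c^{-1/2}e^{1/2}=e^{1/2}pe^{1/2}=e$, and $c\ge e^2+\delta e$ gives $d\le e^{1/2}(e^2+\delta e)^{-1/2}e^{1/2}=\bigl(e(e+\delta)^{-1}\bigr)^{1/2}\le 1$, so $\|d\|\le 1$.

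The one genuinely delicate point—and the step I expect to be the main obstacle—is that $c^{-1/2}$, and hence $d$, a priori lives in the enveloping von Neumann algebra rather than in $A$ itself, because $c$ need not be bounded below. This is exactly where the strict gap is used. I would work instead with the honest elements $d_\eta=e^{1/2}(c+\eta)^{-1/2}e^{1/2}\in A$, for which the same functional calculus gives $d_\eta b d_\eta^*=e-\eta\,e^{1/2}(c+\eta)^{-1}e^{1/2}$, while $c\ge e^2+\delta e$ furnishes the estimate $e^{1/2}(c+\eta)^{-1}e^{1/2}\le\delta^{-1}\cdot 1$ and hence $\|e-d_\eta b d_\eta^*\|\le\eta/\delta$. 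Thus $e$ is a norm limit of $d_\eta b d_\eta^*$ with each $d_\eta$ a contraction; the remaining work, namely upgrading this approximation to an exact identity $dbd^*=e$ for a single contraction $d\in A$, is precisely the content of the cited lemma and is the part that must be handled with care.
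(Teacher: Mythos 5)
Your reduction to the unital case, the inequality $e^2+\delta e\le c:=e^{1/2}be^{1/2}$ (where $e=(a-\eps)_+$ and $\delta=\eps-\|a-b\|$), the verification that each $d_\eta=e^{1/2}(c+\eta)^{-1/2}e^{1/2}$ is a positive contraction lying in $A$, and the estimate $\|e-d_\eta b d_\eta^*\|\le\eta/\delta$ are all correct. (For the record, the paper gives no proof at all here: the statement is quoted directly from Kirchberg--R{\o}rdam, so there is no argument of the paper to compare against.) But what you have established is only the approximate statement: $(a-\eps)_+$ is a norm limit of elements $d_\eta b d_\eta^*$ with $d_\eta$ contractions. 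The lemma asserts an \emph{exact} identity $dbd^*=(a-\eps)_+$ for a single contraction $d\in A$, and your closing sentence defers precisely this step to ``the cited lemma''---which is circular, since that step \emph{is} the lemma being proved. This is a genuine gap, and it is the one nontrivial point of the whole argument: norm convergence of $d_\eta b d_\eta^*$ does not by itself yield any exact factorization, because the contractions $d_\eta$ could a priori fail to converge in $A$ (their weak-$*$ cluster points live only in $A^{**}$). (The approximate version would in fact suffice for the paper's one application of this lemma, which only needs Cuntz subequivalence, but it does not prove the statement as written.)

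The gap can be closed from exactly where you stopped; the missing idea is to show that the $d_\eta$ \emph{themselves} converge in norm. You already have $\delta e\le c$, so Douglas factorization in $A^{**}$ gives $e^{1/2}=\delta^{-1/2}vc^{1/2}$ for some contraction $v\in A^{**}$, and hence
\[
d_\eta \;=\; e^{1/2}(c+\eta)^{-1/2}e^{1/2} \;=\; \delta^{-1}\,v\,c^{1/2}(c+\eta)^{-1/2}c^{1/2}\,v^* \;=\; \delta^{-1}\,v\,g_\eta(c)\,v^*,
\qquad g_\eta(t)=t(t+\eta)^{-1/2}.
\]
An elementary computation gives $\sup_{t\ge0}\,\bigl|t^{1/2}-g_\eta(t)\bigr|\le\eta^{1/2}$, so $\|d_\eta-d_{\eta'}\|\le\delta^{-1}\bigl(\eta^{1/2}+\eta'^{1/2}\bigr)$ and the family $(d_\eta)_{\eta>0}$ is norm-Cauchy as $\eta\to0$. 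Its limit $d$ lies in $A$ (each $d_\eta$ does), is a contraction, and satisfies $dbd^*=\lim_{\eta\to0}d_\eta bd_\eta^*=(a-\eps)_+$ by your own estimate; this is essentially the Kirchberg--R{\o}rdam argument (equivalently, one may invoke R{\o}rdam's lemma that $x(y+\eta)^{-\alpha}$ converges in norm as $\eta\to0$ whenever $x^*x\le y$ and $\alpha<1/2$). One further small correction: your ``formal'' bound $d\le e^{1/2}(e^2+\delta e)^{-1/2}e^{1/2}$ invokes anti-monotonicity of \emph{generalized} inverses, which is false in general (for a proper projection $p\le 1$ one does not have $1\le p$); it is harmless here only because your actual argument runs through the invertible elements $c+\eta$, where anti-monotonicity of $t\mapsto t^{-1/2}$ is valid.
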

 
\begin{Lemma}
	\label{lemma:perturb-positive-element}
	Let $A$ be a unital separable $C^*$-algebra. Let $a \in A_+$ be a nonzero element. Let $b_1,b_2,b_3,\ldots \in A_+$ be a bounded sequence of elements. Let $b$ be the class of the sequence $(b_1,b_2,\ldots)$ in $A_{\infty}$. If $b \precsim a$ then for any $\eps>0$ there exists $n_0$ such that for all $n \geq n_0$ we have $(b_n-\eps)_+ \precsim a$. 
\end{Lemma}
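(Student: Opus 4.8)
The plan is to transfer the relation $b \precsim a$, which holds in the sequence algebra $A_\infty$, down to an honest Cuntz subequivalence in $A$ for the cut-downs $(b_n-\eps)_+$, by combining the approximate characterization of $\precsim$ with the quoted Lemma \ref{lemma:bouding-minus-epsilon}.

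First I would unpack the hypothesis. By definition of Cuntz subequivalence there is a sequence $(y^{(k)})_k$ in $A_\infty$ with $y^{(k)} a (y^{(k)})^* \to b$, where $a$ is viewed in $A_\infty$ as the constant sequence; fixing $k$ large enough yields a single element $y \in A_\infty$ with $\|b - y a y^*\| < \eps$. Next I would pass to representatives: lift $y$ to a bounded sequence $(y_n)$ in $\ell^\infty(A)$. Since the norm of a class in $A_\infty$ is the $\limsup$ of the norms of its entries, the inequality $\|b - y a y^*\| < \eps$ reads $\limsup_n \|b_n - y_n a y_n^*\| < \eps$, so there is $n_0$ with $\|b_n - y_n a y_n^*\| < \eps$ for all $n \geq n_0$.

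Finally, for each such $n$ I would apply Lemma \ref{lemma:bouding-minus-epsilon} with the roles $a \mapsto b_n$ and $b \mapsto y_n a y_n^*$: this produces a contraction $d_n \in A$ with $d_n (y_n a y_n^*) d_n^* = (b_n - \eps)_+$, so $(b_n-\eps)_+ \precsim y_n a y_n^*$. It remains to observe that $y_n a y_n^* \precsim a$, which follows from the standard facts that $z z^* \sim z^* z$ and $c \leq \lambda d \Rightarrow c \precsim d$: writing $z = y_n a^{1/2}$ gives $y_n a y_n^* = z z^* \sim z^* z = a^{1/2} y_n^* y_n a^{1/2} \leq \|y_n\|^2 a \precsim a$. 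By transitivity of $\precsim$ we conclude $(b_n-\eps)_+ \precsim a$ for all $n \geq n_0$, as required.

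The argument is essentially routine, and I do not anticipate a serious obstacle. The only point requiring a little care is the passage from the approximate relation $\|b_n - y_n a y_n^*\| < \eps$ to an exact subequivalence after the $\eps$-cut; this is precisely what Lemma \ref{lemma:bouding-minus-epsilon} is designed to supply, so the entire proof reduces to correctly choosing representatives and bookkeeping the $\limsup$.
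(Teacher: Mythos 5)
Your proposal is correct and follows essentially the same route as the paper: unpack $b \precsim a$ to get $\|b - yay^*\| < \eps$ in $A_\infty$, lift to a bounded representing sequence, use the $\limsup$ description of the norm in $A_\infty$ to get the estimate entrywise for large $n$, and apply Lemma \ref{lemma:bouding-minus-epsilon} to convert the approximate relation into an exact one. The only (harmless) difference is that the paper concludes directly from the identity $(b_n-\eps)_+ = (d_ny_n)a(d_ny_n)^*$, whereas you insert an extra transitivity step via $y_n a y_n^* \precsim a$.
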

\begin{proof}
	Pick $x \in A_{\infty}$ which satisfies $\| x a x^* - b \| < \eps$. Pick a bounded sequence $(x_1,x_2\ldots ) \in l^{\infty}(A)$ which represents $x$. Then there exists $n_0$ such that for all $n \geq n_0$ we have $\| x_n a x_n^* - b_n \| < \eps$. By Lemma \ref{lemma:bouding-minus-epsilon}, for any such $n$ there exists $y_n$ such that $y_nx_n a x_n^* y_n^* = (b_n-\eps)_+$, as required.
\end{proof}

\begin{Prop}
	\label{prop:conditions-for-TZA}
	Let $A$ be a simple unital infinite dimensional stably finite $C^*$-algebra. Suppose that there exists $n \geq 2$ such that for any nonzero positive element $a \in A$, for any $\eps>0$ and for any finite set $F \subset A$  there exists an order zero map $\varphi \colon M_n \to A$ such that:
		\begin{enumerate}
		\item $1 - \varphi(1) \precsim a$ ;
		\item For any $x \in M_n$ and for any $y \in F$ we have $\| [\varphi(x),y] \| < \eps \|x\| $ .
		\end{enumerate}
	Then $A$ is tracially $\Zh$-absorbing.
\end{Prop}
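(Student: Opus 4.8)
The plan is to bootstrap the single value $n \ge 2$ supplied by the hypothesis up to the cofinal family $\{n^j : j \in \N\}$ and then invoke Lemma \ref{Lemma:TZA-large-n}. Since $n \ge 2$, the powers $n^j$ are cofinal in $\N$, so it suffices to verify, for every $j$, the hypothesis of Lemma \ref{Lemma:TZA-large-n} with $k = n^j$: namely that for any nonzero positive $a$, any $\eps > 0$ and any finite $F \subset A$ there is an order zero map $\psi \colon M_{n^j} \to A$ with $1 - \psi(1) \precsim a$ and $\|[\psi(x),y]\| < \eps\|x\|$ for $y \in F$. The construction I would carry out takes place in the ultrapower $A_\omega$, where the merely approximate commutation furnished by the hypothesis is upgraded to the exact commutation required by Lemma \ref{Lemma:two-commuting-order-zero-maps}.

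Fix a nonzero positive $a$. Since $A$ is simple and infinite dimensional, $\overline{aAa}$ is infinite dimensional, so I can pick pairwise orthogonal nonzero positive $a_1,\dots,a_j \in C^*(a)$ (take $a_i = f_i(a)$ for continuous functions vanishing at $0$ with disjoint supports), noting $a_1 \oplus \cdots \oplus a_j \sim a_1 + \cdots + a_j \precsim a$. I would then prove by induction on $j$ that there is an order zero map $\Phi \colon M_{n^j} \to A_\omega \cap A'$ with $1 - \Phi(1) \precsim a_1 \oplus \cdots \oplus a_j$ in $A_\omega$. For the base case $j=1$, fix small $\delta > 0$ with $(a_1 - \delta)_+ \neq 0$, apply the hypothesis to the positive element $(a_1 - \delta)_+$ along an increasing sequence of finite sets $F_m$ with dense union and $\eps_m \to 0$, and assemble the resulting order zero maps $\varphi^{(m)} \colon M_n \to A$ into $\Phi \colon M_n \to A_\omega \cap A'$. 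Here the crucial point is that Lemma \ref{Lemma:bounded-Cuntz-equivalence}, applied to $1 - \varphi^{(m)}(1) \precsim (a_1 - \delta)_+$, produces \emph{norm-bounded} witnesses $w^{(m)}$ with $w^{(m)} a_1 (w^{(m)})^* \to 1 - \varphi^{(m)}(1)$ and $\|w^{(m)}\| \le \sqrt{1/\delta}$; these assemble into an element of $A_\omega$ implementing $1 - \Phi(1) \precsim a_1$.

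For the inductive step $j \to j+1$, I would produce a further order zero map $\Phi' \colon M_n \to A_\omega$ whose image commutes exactly with the separable algebra generated by $A$ and $\Phi(M_{n^j})$ and which satisfies $1 - \Phi'(1) \precsim a_{j+1}$; the exact commutation with $\Phi(M_{n^j})$ is arranged by a reindexation (Kirchberg $\eps$-test) argument, as used in Lemma \ref{Lemma:reindexation}, while the defect is controlled exactly as in the base case. Since $\Phi$ and $\Phi'$ now have commuting ranges inside $A_\omega \cap A'$, Lemma \ref{Lemma:two-commuting-order-zero-maps} yields an order zero map $M_{n^j} \otimes M_n \cong M_{n^{j+1}} \to A_\omega \cap A'$ with unit $\Phi(1)\Phi'(1)$. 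Using the elementary inequality $1 - pq \le (1-p) + (1-q)$ for commuting positive contractions (and iterating), one gets $1 - \Phi(1)\Phi'(1) \precsim (1-\Phi(1)) \oplus (1-\Phi'(1)) \precsim (a_1 \oplus \cdots \oplus a_j) \oplus a_{j+1} \precsim a$, completing the induction.

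Finally I would transfer the $A_\omega$-map back to $A$. Lifting the order zero map $M_{n^j} \to A_\omega \cap A'$ to a sequence of order zero maps $\psi^{(m)} \colon M_{n^j} \to A$, commutation with $A$ gives $\|[\psi^{(m)}(x),y]\| < \eps\|x\|$ for all $y \in F$ for $\omega$-most $m$, while Lemma \ref{lemma:perturb-positive-element} applied to the bounded sequence $(1 - \psi^{(m)}(1))$, combined with Lemma \ref{lemma:perturb-order-zero-map} to absorb the resulting $(\,\cdot - \eps')_+$ term, yields a single order zero map $\psi$ with $1 - \psi(1) \precsim a$ and the required approximate commutation; this is exactly the hypothesis of Lemma \ref{Lemma:TZA-large-n} for $k = n^j$. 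I expect the main obstacle to be the inductive step: arranging exact commutation of the new $M_n$-copy with the previously built $M_{n^j}$-copy inside $A_\omega$ while \emph{simultaneously} keeping its defect Cuntz-below a prescribed orthogonal piece of $a$ through norm-bounded witnesses. Passing to $A_\omega$ is what makes the commutation exact, and Lemma \ref{Lemma:bounded-Cuntz-equivalence} is what keeps the comparison honest there.
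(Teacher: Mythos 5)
Your argument is in essence the paper's own proof, reorganized: both pass to the central sequence algebra to make commutation exact, combine commuting order zero maps via Lemma \ref{Lemma:two-commuting-order-zero-maps}, push the Cuntz subequivalence of the defects through the limit using the norm-bounded witnesses of Lemma \ref{Lemma:bounded-Cuntz-equivalence}, descend back to $A$ via Lemmas \ref{lemma:perturb-positive-element} and \ref{lemma:perturb-order-zero-map}, and conclude with Lemma \ref{Lemma:TZA-large-n}. The only difference is bookkeeping: the paper proves that the hypothesis for $n$ implies the hypothesis for $n^2$ (a single doubling carried out entirely at the level of $A$, using two orthogonal pieces of $a$) and iterates that implication, while you run the induction inside $A_\omega$ with $j$ orthogonal pieces and descend once per exponent; nothing of substance changes.

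There is one slip, easily repaired. You cannot take the pairwise orthogonal elements $a_1,\dots,a_j$ to be of the form $f_i(a)$ inside $C^*(a)$: the element $a$ may have finite spectrum (for instance $a=1$, which is allowed since $A$ is unital), in which case $C^*(a)$ is finite dimensional and contains no such family once $j$ exceeds the number of spectral values. The orthogonal pieces must instead be chosen in the hereditary subalgebra $\overline{aAa}$ --- which, as the first clause of that same sentence of yours correctly notes, is infinite dimensional because $A$ is simple, unital and infinite dimensional. For example, choose a positive element $b \in \overline{aAa}$ with infinite spectrum and set $a_i = f_i(b)$, or invoke \cite[Proposition 4.10]{kirchberg-rordam-non-simple} as the paper does. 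Since then $a_1+\cdots+a_j \in \overline{aAa}$, the comparison $a_1 \oplus \cdots \oplus a_j \sim a_1+\cdots+a_j \precsim a$ still holds, and the rest of your argument is unaffected.
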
 
\begin{proof}
 Let $n$ as in the statement. We show that the hypothesis holds with $n^2$ in place of $n$, and hence by repeating the argument as many times as needed, the hypothesis of Lemma \ref{Lemma:TZA-large-n} holds.
 
 Fix some $\|a\|> \delta> 0$. The hypotheses of the proposition imply that for any separable subspace $S \subset A_{\infty}$  there exists a sequence of order zero maps $\psi_k \colon M_n \to A$ for $k=1,2,\ldots $ such that $1-\psi_k(1) \precsim (a-\delta)_+$ for all $k$ and such that the composition with the quotient map $l^{\infty}(A) \to A_{\infty}$ defines an order zero map $\psi \colon M_n \to A_{\infty} \cap S'$. By Lemma \ref{Lemma:bounded-Cuntz-equivalence}, we have $1-\psi(1) \precsim a$. (Note that we cannot deduce that $1-\psi(1) \precsim (a-\delta)_+$, as this requires having a sequence of elements implementing the Cuntz subequivalence whose norms are uniformly bounded in $k$.) 
 
 The hypotheses imply that $A$ is not type I, and therefore we can choose two nonzero orthogonal positive elements $a_1,a_2 \leq a$. 
 Fix a finite set $F$ and $\eps>0$. Pick an order zero map $\psi \colon M_n \to A_{\infty} \cap F'$ such that $1-\psi(1) \precsim a_1$. Now, pick another order zero map $\eta \colon M_n \to A_{\infty} \cap (F\cup \psi(M_n) )'$ such that $1-\eta(1) \precsim a_2$. By Lemma \ref{Lemma:two-commuting-order-zero-maps}, there exists an order zero map $\theta \colon M_n \otimes M_n \to C^*(\psi(M_n),\eta(M_n)) \subseteq A_{\infty} \cap F'$ such that $\theta(1) = \psi(1)\eta(1) = \psi(1)^{1/2}\eta(1)\psi(1)^{1/2}$. Then we have
 
 \begin{align*}
 1-\theta(1)  & = ( 1-\psi(1) ) + \psi(1)^{1/2} (1-\eta(1)) \psi(1)^{1/2} \\
 & \precsim  ( 1-\psi(1)  ) \oplus \psi(1)^{1/2} (1-\eta(1)) \psi(1)^{1/2} \\
 & \precsim a_1 \oplus a_2 \\
 & \approx a_1+a_2
 \end{align*}
 We lift $\theta$ to sequence of order zero maps $\theta_k \colon M_n \otimes M_n \to A$, for $k=1,2,\ldots$. Pick a sufficiently large $k$ such that $\| [\theta_k(x),y ] \| < \eps \|x\|/2$ for all $x \in M_n \otimes M_n$ and for all $y \in F$ and such that $(1-\theta_k(1) - \eps/2)_+ \precsim a$. Using Lemmas \ref{lemma:perturb-order-zero-map} and \ref{lemma:perturb-positive-element}, we can find an order zero map $\theta' \colon M_n \otimes M_n \to A$ such that $\| \theta' - \theta_k \| < \eps/2$ and $1-\theta'(1) = \frac{1}{1-\eps/2}(1-\theta_k(1)-\eps/2)_+$. The map $\theta'$ is now readily verified to satisfy the conclusion.
\end{proof}
 
 Using stability of generators of matrix cones (\cite{loring}), we have the following reformulation of Proposition \ref{prop:conditions-for-TZA}.
\begin{Lemma}
\label{lemma:TZA-stability-relations}
Let $A$ be a simple unital infinite dimensional stably finite $C^*$-algebra. Suppose that any nonzero positive element $a \in A$, for any $\eps>0$ and for any finite set $F \subset A$ there exists $n \geq 2$ contractions $x_2,x_3,\ldots, x_n$ which satisfy 
		\begin{enumerate}
		\item $\|x_jx_k\|<\eps$ for all $j,k \in \{2,3,\ldots,n\}$;
		\item $\|x_j^*x_k\|<\eps$ whenever $k \neq j$;
		\item $\|x_j^*x_j - x_k^*x_k\|< \eps$ for all $j,k \in \{2,3,\ldots,n\}$;
		\item There exists a positive contraction $b \precsim a$ such that $\| 1 - x_2^*x_2 - \sum_{k=2}^n x_k x_k^* - b \| < \eps $ ;
		\item For any $k=2,3,\ldots,n$ and for any $y \in F$ we have $\| [x_k,y] \| < \eps $ .
		\end{enumerate}
	Then $A$ is tracially $\Zh$-absorbing.
\end{Lemma}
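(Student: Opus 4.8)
The plan is to reduce the hypothesis of this lemma to that of Proposition~\ref{prop:conditions-for-TZA} by recognizing the five approximate relations as a perturbed solution to the defining relations of a completely positive contractive order zero map $M_n \to A$, together with the auxiliary comparison condition. Throughout, $n$ is fixed (as in Proposition~\ref{prop:conditions-for-TZA}) and the hypothesis is applied with a small tolerance $\eps'$ to be determined.

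First I would observe that the \emph{exact} versions of relations (1)--(3), imposed on contractions $x_2,\ldots,x_n$, are precisely the defining relations of the generators of the cone $CM_n = C_0((0,1])\otimes M_n$. Indeed, writing $x_k = \varphi(e_{k1})$ for a genuine order zero map $\varphi\colon M_n\to A$ and using the structure $\varphi(\cdot) = \pi(\cdot)\varphi(1)$ of \cite[Corollary 4.2]{winter-zacharias-order-zero}, one computes $x_j x_k = 0$, $x_j^* x_k = 0$ for $j\neq k$, and $x_j^* x_j = x_k^* x_k = \pi(e_{11})\varphi(1)^2$; conversely these relations reconstruct $\varphi$, with $\varphi(M_n)$ generated as a $C^*$-algebra by $x_2,\ldots,x_n$. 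Since $CM_n$ is projective, these relations are weakly stable \cite{loring}: for the fixed $n$ and any target tolerance there is $\eta>0$ such that any contractions satisfying (1)--(3) to within $\eta$ lie within the target of elements $\tilde x_2,\ldots,\tilde x_n$ satisfying them exactly. I would therefore apply the hypothesis with $\eps'\leq\eta$ to obtain such $\tilde x_k$, hence an honest order zero map $\varphi\colon M_n\to A$ with $\tilde x_k = \varphi(e_{k1})$.

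It then remains to verify the two conditions of Proposition~\ref{prop:conditions-for-TZA} for $\varphi$. Condition (2) follows from relation (5): since $\|\tilde x_k - x_k\|$ is small, the $\tilde x_k$ approximately commute with each $y\in F$, and as $\varphi(M_n) = C^*(\tilde x_2,\ldots,\tilde x_n)$ this propagates, with constants depending only on the fixed $n$, to $\|[\varphi(z),y]\| < \eps\|z\|$ for all $z\in M_n$; this is arranged by taking $\eps'$ small. Condition (1), namely $1-\varphi(1)\precsim a$, is the delicate point: the exact relations give $\tilde x_2^*\tilde x_2 + \sum_{k=2}^n \tilde x_k\tilde x_k^* = \varphi(1)^2$, so relation (4) becomes $\|(1-\varphi(1)^2) - b\| < \eps''$ with $b\precsim a$, where $\eps''$ is controlled by $\eps'$ together with the perturbation. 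By Lemma~\ref{lemma:bouding-minus-epsilon} this yields $(1-\varphi(1)^2-\eps'')_+ \precsim b \precsim a$, but this controls $1-\varphi(1)^2$ rather than the required $1-\varphi(1)$.

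To close this gap I would invoke Lemma~\ref{lemma:perturb-order-zero-map} to replace $\varphi$ by an order zero map $\psi$ with $\|\psi-\varphi\|<\eps''$ and $1-\psi(1) = \frac{1}{1-\eps''}(1-\varphi(1)-\eps'')_+$. Comparing supports through the continuous functional calculus in $C^*(\varphi(1))$, where $0\leq\varphi(1)\leq 1$, and using $1-\eps''\leq\sqrt{1-\eps''}$, one has $\{1-\varphi(1)>\eps''\} = \{\varphi(1)<1-\eps''\} \subseteq \{\varphi(1)<\sqrt{1-\eps''}\} = \{1-\varphi(1)^2>\eps''\}$, whence $1-\psi(1)\precsim (1-\varphi(1)^2-\eps'')_+\precsim a$. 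Since $\psi$ is still order zero and, for $\eps'$ small, still commutes with $F$ to within $\eps$, the map $\psi$ witnesses the hypothesis of Proposition~\ref{prop:conditions-for-TZA}, and tracial $\Zh$-absorption follows. \textbf{The main obstacle} I anticipate is exactly this last conversion: relations (1)--(4) naturally control $1-\varphi(1)^2$, and turning this into $1-\varphi(1)\precsim a$ requires the cutoff and order-zero perturbation bookkeeping above, taking care that shrinking the defect by $\eps''$ does not enlarge its support beyond that of $(1-\varphi(1)^2-\eps'')_+$.
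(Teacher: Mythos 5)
Your proposal is correct and takes essentially the same route the paper intends: the paper's entire proof is the remark that the lemma is a reformulation of Proposition~\ref{prop:conditions-for-TZA} via stability of the generators of matrix cones (\cite{loring}), which is exactly your argument, with your support-comparison step handling the $1-\varphi(1)$ versus $1-\varphi(1)^2$ discrepancy that the paper leaves implicit. Your reading of the statement with $n$ fixed uniformly over $(a,\eps,F)$ is the intended one, since it matches both the Proposition and the way the lemma is invoked in the proof of Theorem~\ref{thm:tracially-Z-abs}.
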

 
We recall the definition of a strongly tracially approximately inner automorphism, introduced in \cite{phillips-tracial}. This is a significant weakening of the notion of approximate innerness. For example, automorphisms with this property need not act trivially on $K$-theory; see \cite[Theorem 6.6]{phillips-tracial}.

\begin{Def}\cite[Definition 4.2]{phillips-tracial}
\label{def:STAI}
Let $A$ be an infinite dimensional simple separable unital $C^*$-algebra  and  let
$\alpha \in \aut (A)$.  We  say  that
$\alpha$ is \emph{strongly tracially approximately inner} if for every finite set
$F \subseteq A$, 
every $\eps>0$, and every positive element
$x\in A$ with
$\|x\|= 1$, there exist a projection
$p \in A$ 
and a partial isometry
$v\in A$ 
such that:
\begin{enumerate}
\item  $v^*v=p$ and $vv^*=\alpha(p)$.
\item $ \| pa - ap \| < \eps$ for all
$a\in F$.
\item $ \| vpapv^* - \alpha (pap) \| < \eps$ 
for all $a\in F$.
\item $1-p$ is Murray-von Neumann equivalent to a projection in $xAx$.
\item $\| pxp \| >1-\eps$.
\end{enumerate}
\end{Def}
Notice that in Definition \ref{def:STAI}, it follows that  $ \| pap - v^* \alpha (pap) v \| < \eps$ for all $a \in F$ as well, and $v = \alpha(p)v = vp$. Furthermore, for any $a \in F$, 
$va = vpa \approx_{\eps} v pap = v pap v^* v \approx_{\eps} \alpha(pap) v \approx_{\eps} \alpha (ap) v = \alpha(a) v$.

We now recall the definition of the generalized tracial Rokhlin property for finite group actions and for actions of $\Z$ (\cite[Definitions 5.2 and 6.1]{hirshberg-orovitz}).

\begin{Def}
Let $\alpha \colon G \to \aut(A)$ be an action of a finite group $G$ on a simple, unital $C^*$-algebra $A$. The action
$\alpha$ is said to have the \emph{generalized tracial Rokhlin property} if for any 
$\eps > 0$, for any
finite subset 
$F \subseteq A$, and for any non-zero positive element 
$a \in A$ there exist positive
contractions 
$\{e_g\}_{g\in G} \subset A$ of norm $1$ such that:
\begin{enumerate}
\item  $e_g e_h = 0$ whenever $ g \neq h$;
\item $1 - \sum_{g\in G} e_g \precsim a$;
\item 	$ \| [e_g, y] \|< \eps$ for all $g \in G$ and for all $ y \in F$;
\item $ \|	\alpha_g(e_h) - e_{gh} \| < \eps$ for all $g,h \in G$.
\end{enumerate}
\end{Def}

\begin{Def}
\label{def:gtRp-Z}
 Let $\alpha$ be an automorphism of a simple, unital $C^*$-algebra $A$. We say that $\alpha$ has
the \emph{generalized tracial Rokhlin property} if for any finite set $F \subseteq A$, for any $\eps > 0$, for any 
$k \in \N$, and for
any non-zero positive element 
$a \in A$ there exist orthogonal positive norm $1$ contractions
$e_1, e_2 \ldots , e_k \in A$ such that the following hold:
\begin{enumerate}
\item $1 - \sum_{j=1}^k e_j \precsim a$;
\item $ \| 	[e_j, y] \|	 < \eps$ for $j=1,2,\ldots,k$ and for all $y \in F$;
\item $ \|	\alpha(e_j ) - e_{j+1} \|	< \eps$ for all $j=1,2,\ldots,k-1$.
\end{enumerate}
\end{Def}

\begin{Lemma}
\label{lemma:gTRP-restrict-to-subgroup}
Let $\alpha \colon G \to A$ be an action of a group $G$ on a simple, unital $C^*$-algebra $A$ with $G$ finite or $G = \Z$. If $\alpha$ has the generalized tracial Rokhlin property then so does any restriction to a subgroup.
\end{Lemma}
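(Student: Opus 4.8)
The plan is to split into two cases according to whether $G$ is finite or $G = \Z$, and in each case build the Rokhlin towers for the subgroup by regrouping the towers for $G$.

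First suppose $G$ is finite and let $H < G$ be a subgroup. I would fix a right transversal $T$ for $H$ in $G$, so that every $g \in G$ is uniquely of the form $g = ht$ with $h \in H$ and $t \in T$. Given $\eps > 0$, a finite set $F$ and a nonzero positive $a$, apply the generalized tracial Rokhlin property for $\alpha$ with tolerance $\eps / |T|$ to obtain contractions $\{e_g\}_{g \in G}$, and set $f_h = \sum_{t \in T} e_{ht}$ for $h \in H$. Because $ht = h't'$ forces $t = t'$ and then $h = h'$, the elements $\{e_{ht}\}$ are mutually orthogonal both within a fixed $h$ and across distinct $h$; hence each $f_h$ is a positive contraction of norm $1$, we have $f_h f_{h'} = 0$ for $h \ne h'$, and $1 - \sum_{h} f_h = 1 - \sum_{g} e_g \precsim a$. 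The commutator estimate follows from $\|[f_h, y]\| \le \sum_t \|[e_{ht}, y]\|$, and equivariance from $\alpha_{h_0}(f_h) = \sum_t \alpha_{h_0}(e_{ht}) \approx \sum_t e_{h_0 h t} = f_{h_0 h}$; both errors are bounded by $|T|$ times the tolerance, which is precisely why I chose tolerance $\eps / |T|$.

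For $G = \Z$, the only subgroups are $\{0\}$ and $n\Z$ for $n \ge 1$. The restriction to $\{0\}$ is trivially handled (take the single element $1$), so assume the subgroup is $n\Z$, generated by $\alpha^n$, and verify Definition \ref{def:gtRp-Z} for $\alpha^n$. Given $F$, $\eps$, $k$ and $a$, I would apply the property for $\alpha$ with tower length $kn$ and a tolerance $\eps'$ to be fixed, obtaining $e_1, \ldots, e_{kn}$, and set $f_j = \sum_{i=(j-1)n+1}^{jn} e_i$ for $j = 1, \ldots, k$. Orthogonality of the blocks shows each $f_j$ is a norm-$1$ positive contraction with $f_j f_{j'} = 0$ for $j \ne j'$, and $1 - \sum_j f_j = 1 - \sum_{i=1}^{kn} e_i \precsim a$ directly. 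The new point is equivariance: a telescoping estimate using that $\alpha$ is isometric, namely $e_{i+n} - \alpha^n(e_i) = \sum_{l=0}^{n-1} \alpha^{n-1-l}\!\left(e_{i+l+1} - \alpha(e_{i+l})\right)$, gives $\|\alpha^n(e_i) - e_{i+n}\| \le n \eps'$ whenever $i + n \le kn$, so $\alpha^n(f_j) = \sum_{i=(j-1)n+1}^{jn} \alpha^n(e_i) \approx \sum_{i=jn+1}^{(j+1)n} e_i = f_{j+1}$ with error at most $n^2 \eps'$. Choosing $\eps' = \eps / n^2$ (which also makes the commutator error at most $n\eps' \le \eps$) yields all required estimates.

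The only thing to watch is the accumulation of errors when summing over a coset transversal in the finite case, or over a block of length $n$ and composing $\alpha$ with itself $n$ times in the $\Z$ case. Since every subgroup in question has finite index, the relevant index sets are finite, and this is handled simply by shrinking the input tolerance by the appropriate finite factor; no genuine difficulty arises.
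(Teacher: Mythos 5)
Your proof is correct, and it is exactly the straightforward regrouping argument the paper intends: the paper omits the proof of this lemma, noting only that it is straightforward and similar to Lemma \ref{Lemma:restrict-to-subgroup}, whose key step is likewise a passage from $G$-cosets to cosets of the subgroup. Both of your cases check out --- the transversal bookkeeping (with tolerance $\eps/|T|$) for finite $G$, and for $G=\Z$ the telescoping identity giving $\|\alpha^n(e_i)-e_{i+n}\|\le n\eps'$ on in-range indices --- so nothing further is needed.
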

The proof is straightforward and similar to that of Lemma \ref{Lemma:restrict-to-subgroup}, so we omit it. 

\begin{Thm}
	\label{thm:tracially-Z-abs}
	Let $A$ be a simple unital infinite dimensional stably finite $C^*$-algebra. Let $G$ be a finite group or $G = \Z$. Let $\alpha \colon G \to \aut(A)$ be an action which has the generalized tracial Rokhlin property and such that $\alpha_g$ is strongly tracially approximately inner for some nontrivial $g \in G$. If $A$ furthermore has stable rank 1, then $A$ is tracially $\Zh$-absorbing. If $\alpha_g$ above is in fact approximately inner, then the stable rank 1 condition is not needed.
\end{Thm}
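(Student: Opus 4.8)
The plan is to mimic the strategy of Theorem \ref{Thm:Z-absorption}, but to verify tracial $\Zh$-absorption through the reformulation of Proposition \ref{prop:conditions-for-TZA} (most conveniently in the approximate, finite-stage form of Lemma \ref{lemma:TZA-stability-relations}) rather than through a homomorphism out of a homogeneous crossed product. By Lemma \ref{lemma:gTRP-restrict-to-subgroup} we may restrict to $G = \langle g \rangle$ and write $\alpha = \alpha_g$; this group is either $\Z_n$ with $n = |g| \geq 2$ or $\Z$. In either case I fix once and for all a single $n \geq 2$ (namely $n = |g|$ in the finite-order case and $n = 2$ in the infinite-order case) and aim to produce, for each nonzero positive $a$, each finite $F$ and each $\eps>0$, an order zero map $\Theta \colon M_n \to A_\infty \cap F'$ with $1 - \Theta(1) \precsim a$, which is exactly the data feeding Proposition \ref{prop:conditions-for-TZA}.

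First I would use the generalized tracial Rokhlin property together with a routine reindexation argument to produce in $A_\infty$ mutually orthogonal, central positive contractions $e_1, \ldots, e_n$ with $\alpha(e_j) = e_{j+1}$ (indices read cyclically modulo $n$ in the $\Z_n$ case, and only for $j<n$ in the $\Z$ case) and with $1 - \sum_j e_j \precsim a$; the passage of the Cuntz comparison into $A_\infty$ is handled as in the proof of Proposition \ref{prop:conditions-for-TZA}, via Lemma \ref{Lemma:bounded-Cuntz-equivalence}. Writing $e = e_1$, once we have produced a unitary $W$ commuting with $A$ (hence with $F$) and satisfying $W e_j W^* = e_{j+1}$, we will have $e_j = W^{j-1} e W^{1-j}$, and then $\Theta(E_{jk}) = W^{j-1} e W^{1-k}$ defines a cpc order zero map (the orthogonality of the tower being exactly what makes it order zero) which lands in $A_\infty \cap F'$ and has $1 - \Theta(1) = 1 - \sum_j e_j \precsim a$. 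Thus the whole problem reduces to manufacturing such a central implementing unitary $W$.

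The construction of $W$ is where approximate innerness enters, and it is the analogue of the device $w = u^* v$ in the proof of Theorem \ref{Thm:Z-absorption}. When $\alpha_g$ is approximately inner, Lemma \ref{Lemma:reindexation}(2) supplies genuine unitaries in $A_\infty$ implementing $\alpha$ on any prescribed separable subalgebra. I would take $U_1$ implementing $\alpha$ on $A$ and, constructing the tower after $U_1$ is fixed so as to commute with it, arrange $[U_1, e_j] = 0$; then take $U_2$ implementing $\alpha$ on the separable algebra generated by $A$ and the tower. Setting $W = U_1^* U_2$, one checks directly that $W$ commutes with $A$ (both $U_1,U_2$ implement $\alpha$ there, so the discrepancy is trivial) while $W e_j W^* = U_1^* \alpha(e_j) U_1 = U_1^* e_{j+1} U_1 = e_{j+1}$, using that $U_1$ commutes with the tower. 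This produces the required unitary and, in the approximately inner case, completes the argument with no use of stable rank $1$.

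In the strongly tracially approximately inner case the objects supplied by Definition \ref{def:STAI} are only partial isometries $v$ on a large corner, with $v^*v = p$, $vv^* = \alpha(p)$, $1 - p$ Cuntz-small, intertwining $\alpha$ only on that corner. Here I would use stable rank $1$ at each finite stage: since $\alpha$ preserves Murray--von Neumann equivalence, $1 - p \sim 1 - \alpha(p)$, so $v$ extends to a genuine unitary $\tilde v \in A$ agreeing with $v$ on the $p$-corner. Assembling the $\tilde v$'s into $A_\infty$ and running the two-element construction above with $\tilde V_1, \tilde V_2$ in place of $U_1, U_2$, I would arrange the tower to be supported inside the central, co-small corner $P$, so that $W = \tilde V_1^* \tilde V_2$ acts there exactly as $V_1^* V_2$; the relations $W e_j W^* = e_{j+1}$ and $[W,F]=0$ then hold up to terms supported on $1 - P \precsim a$ and on the $\eps$-defects of the intertwining. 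The main obstacle is precisely this last bookkeeping step: controlling the corner defects coming from the partial isometries and the approximate intertwining, and confirming that they are uniformly absorbed either into the $\eps$-conditions or into the $b \precsim a$ slack of Lemma \ref{lemma:TZA-stability-relations} (applied to the off-diagonal elements $x_k = \Theta(E_{k,1})$), if necessary after a perturbation via Lemma \ref{lemma:perturb-order-zero-map}. This is exactly the point at which stable rank $1$ is indispensable, and it explains why that hypothesis disappears once $\alpha_g$ is genuinely approximately inner.
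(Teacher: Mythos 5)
Your treatment of the approximately inner case is essentially sound, and is a clean sequence-algebra variant of what the paper does (the paper handles that case inside its single computation by taking $p=q=1$): with an exact tower in $A_\infty$ and two exactly implementing unitaries $U_1,U_2$, the element $W=U_1^*U_2$ does commute with $A$ and shift the tower, and $E_{jk}\mapsto W^{j-1}e_1W^{1-k}$ is a genuine order zero map feeding Proposition \ref{prop:conditions-for-TZA}.

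The gap is in the strongly tracially approximately inner case, which is where the content of the theorem lies. Extending $v$ to a unitary $\tilde v=v+r$ (legitimate under stable rank $1$) destroys the one property that makes any such computation work: the norm-approximate intertwining $va\approx_{3\eps}\alpha(a)v$ holds precisely because $v$ is corner-supported ($v=vp=\alpha(p)v$), whereas $\tilde v a\tilde v^*-\alpha(a)$ contains terms such as $rar^*$, which are supported in the small corners but have norm of order $\|a\|$. Consequently $W=\tilde V_1^*\tilde V_2$ fails \emph{in norm} both to commute with $F$ and to shift the tower; its defects are only Cuntz-small. Now look at which conditions of Lemma \ref{lemma:TZA-stability-relations} can absorb Cuntz-small but norm-large errors: only condition (4), via the slack element $b\precsim a$. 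Conditions (1), (2), (3) and (5) --- the orthogonality relations, the matching of the $x_j^*x_j$, and the commutation with $F$ --- are pure norm conditions with no Cuntz slack, and for your $x_k=\Theta(E_{k1})$ the quantities $\|x_jx_k\|$, $\|[x_k,y]\|$ will be of order $1$. The proposed fix, supporting the tower inside the corner $P$, is not available: the hypotheses give $\|[e_j,q]\|$ small and $1-q$ Cuntz-small, but never $\|e_j(1-q)\|$ small, and cutting the tower to $qe_jq$ ruins the shift relations because $\alpha(q)\neq q$. This is exactly why the paper never extends the partial isometries. It builds the $x_k$ as alternating words $e_k^{1/4}\,v^*\alpha(v^*)\cdots\alpha^{k-2}(v^*)\,\alpha^{k-2}(w)\cdots\alpha(w)w\,e_1^{1/4}$ directly from $v$ and $w$, chosen in a specific order ($v$ first; then the tower approximately commuting with the $\alpha^j(v)$, $\alpha^j(p)$; then $w$, whose intertwining covers the tower elements), so that conditions (1)--(2) hold exactly, (3) and (5) hold in norm, and every corner defect ($1-p$ and $1-\alpha^k(q)$) accumulates only inside condition (4). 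Stable rank $1$ enters there and only there: by cancellation $1-\alpha(q)=1-ww^*\sim 1-w^*w=1-q$, hence inductively $1-\alpha^k(q)\sim 1-q$, which Definition \ref{def:STAI}(4) makes Cuntz-dominated by $a_0$; the unit defect is then a sum of $2n+1$ pieces each $\precsim a_0$, hence $\precsim a$. (Incidentally, your justification ``$\alpha$ preserves Murray--von Neumann equivalence, so $1-p\sim 1-\alpha(p)$'' is not the right reason; what is used is $p\sim\alpha(p)$ via $v$ together with cancellation.) To rescue your scheme you would have to either establish norm control that the hypotheses do not provide, or rebuild the $x_k$ so that the corners travel with the word --- which is precisely the paper's construction.
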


\begin{proof}
		Using Lemma \ref{lemma:gTRP-restrict-to-subgroup}, we may assume that $G$ is a cyclic group (of either finite or infinite order), and the generator $\alpha$ is  strongly tracially approximately inner. Let $n$ be the order of $\alpha$ if $G$ is finite, and otherwise fix any integer $n \geq 2$.
	
	Fix a nonzero positive elements $a \in A$, a finite set $F \subset A$ and $\eps>0$. We show that there exists an order zero map $\varphi \colon M_n \to A$ satisfying the hypotheses of Proposition \ref{prop:conditions-for-TZA}.
	
	By \cite[Proposition 4.10]{kirchberg-rordam-non-simple}, there exist pairwise orthogonal and mutually Cuntz-equivalent positive contractions $a_0,a_1,\ldots,a_{2n} \precsim a$, which we may assume have norm $1$.

	Set $\eps' = \eps/6n$. 		
	Choose a partial isometry $v$ as in Definition \ref{def:STAI} for the given finite set $F$ with $a_0$ in place of $x$ and $\eps'$ in place of $\eps$.  Set $p = v^*v$.
	
	Now, choose positive elements $e_1,e_2,\ldots,e_n \in A$ as in Definition \ref{def:gtRp-Z} with $F \cup \{\alpha^j(v),\alpha^j(p)\}_{j=0,1,2,\ldots,n-1}$ in place of $F$, $\eps'$ in place of $\eps$ and $a_0$ in place of $a$. If $G = \Z_n$, then we use the same notation, where we can think of $\{1,2,\ldots,n\}$ as indexing the group elements. We furthermore choose the Rokhlin elements $e_1,e_2,\ldots,e_n$ to satisfy $\|\alpha (e_j^{1/2}) - e_{j+1}^{1/2} \| < \eps'$ for $j=1,2,\ldots,n-1$ as well as $\|[e_j^{1/2},y]\|<\eps'$ and $\|[e_j^{1/4},y]\|<\eps'$ for $j=1,2,\ldots,n$ and for all $y \in F$ (which can be done in a standard way by approximating $e_j^{1/2}$ and $e_j^{1/4}$ by polynomials). Set $e = \sum_{j=1}^n e_j$. 
	
	Lastly, we choose a partial isometry $w$ (in place of $v$) as in Definition \ref{def:STAI} with the set 
	\[
	F \cup \{\alpha^j(v),\alpha^j(v^*),\alpha^j(p)\}_{j=0,1,2,\ldots,n-1} \cup \{e_j,e_j^{1/2},e_j^{1/4}\}_{j=1,2,\ldots,n} 
	\]  in place of $F$, with $a_0$ in place of $x$ and with $\eps'$ in place of $\eps$.  Set $q = w^*w$.
	
	For $k=2,3,\ldots,n$, set 
	\[
	x_k =  e_k^{1/4}  v^* \alpha (v^*) \alpha^2(v^*) \cdots \alpha^{k-2}(v^*) \alpha^{k-2}(w) \cdots \alpha^2 (w) \alpha (w) w  e_1^{1/4}
	.
	\]
	We check that the elements $x_1,x_2,\ldots,x_n$ satisfy the conditions of Lemma \ref{lemma:TZA-stability-relations}.
	Using repeatedly the fact that $vp = v$ and $vv^* = \alpha(p)$ and likewise that $wq = w$ and $ww^* = \alpha(q)$ we have
	\begin{align*}
		x_k^* x_k &=  e_1^{1/4} w^* \alpha(w^*) \cdots \alpha^{k-2}(w^*) \alpha^{k-2}(v) \cdots \alpha(v) v e_k^{1/2} v^* \alpha (v^*)  \cdots \alpha^{k-2}(v^*) \alpha^{k-2}(w) \cdots \alpha (w) w  e_1^{1/4} \\
			&\approx_{(k-1)\eps'}  e_1^{1/4} w^*  \cdots \alpha^{k-2}(w^*)  e_k^{1/2} \alpha^{k-2}(v) \cdots \alpha(v) v v^* \alpha (v^*)  \cdots \alpha^{k-2}(v^*) \alpha^{k-2}(w) \cdots  w  e_1^{1/4}  \\
			&=   e_1^{1/4} w^*  \cdots \alpha^{k-2}(w^*)  e_k^{1/2} \alpha^{k-2}(v) \cdots \alpha(v) \alpha(p) \alpha (v^*)  \cdots \alpha^{k-2}(v^*) \alpha^{k-2}(w) \cdots w  e_1^{1/4} \\
			&=   e_1^{1/4} w^*  \cdots \alpha^{k-2}(w^*)  e_k^{1/2} \alpha^{k-2}(v) \cdots \alpha(v) \alpha (v^*)  \cdots \alpha^{k-2}(v^*) \alpha^{k-2}(w) \cdots  w  e_1^{1/4}\\
			&=   e_1^{1/4} w^*  \cdots \alpha^{k-2}(w^*)  e_k^{1/2} \alpha^{k-2}(v) \cdots \alpha^2(v) \alpha(vv^*) \alpha^2 (v^*)  \cdots \alpha^{k-2}(v^*) \alpha^{k-2}(w) \cdots  w  e_1^{1/4}\\
			&=   e_1^{1/4} w^*\cdots \alpha^{k-2}(w^*)  e_k^{1/2} \alpha^{k-2}(v) \cdots \alpha^2(v) \alpha^2(p) \alpha^2 (v^*)  \cdots \alpha^{k-2}(v^*) \alpha^{k-2}(w) \cdots  w  e_1^{1/4}\\
			&=   e_1^{1/4} w^* \cdots \alpha^{k-2}(w^*)  e_k^{1/2} \alpha^{k-2}(v) \cdots \alpha^2(v^*)  \cdots \alpha^{k-2}(v^*) \alpha^{k-2}(w) \cdots  w  e_1^{1/4}\\
			& \vdots \\
			&=   e_1^{1/4} w^* \alpha(w^*) \cdots \alpha^{k-2}(w^*)  e_k^{1/2} \alpha^{k-1}(p) \alpha^{k-2}(w) \cdots \alpha (w) w  e_1^{1/4}\\
			&\approx_{(k-1)\eps'}   e_1^{1/4} w^* \alpha(w^*) \cdots \alpha^{k-2}(w^* \alpha (e_1^{1/2} p)  w) \cdots \alpha (w) w  e_1^{1/4} \\
			&\approx_{\eps'}   e_1^{1/4} w^* \alpha(w^*) \cdots \alpha^{k-2}(q e_1^{1/2} p q) \cdots \alpha (w) w  e_1^{1/4} \\
			\\ & \vdots \\
			&\approx_{\eps'} e_1^{1/4} q e_1^{1/2} p q e_1^{1/4} \\
			&\approx_{5\eps'} e_1^{1/2} q p q e_1^{1/2} 
	\end{align*}
	so $\|	x_k^* x_k - e_1^{1/2} q p q e_1^{1/2}   \| < ( 3(k-1)+5 ) \eps'$. 
	Likewise, using at the last step repeatedly that $v^*\alpha(v^*v)v = v^* v v^* v = p$, we have
	\begin{align*}
			x_k x_k^* &=  e_k^{1/4} v^* \alpha (v^*)  \cdots \alpha^{k-2}(v^*) \alpha^{k-2}(w) \cdots \alpha (w) w  e_1^{1/2} w^* \alpha(w^*) \cdots \alpha^{k-2}(w^*) \alpha^{k-2}(v) \cdots \alpha(v) v  e_k^{1/4} \\
			&\approx_{\eps'} e_k^{1/4} v^* \alpha (v^*)  \cdots \alpha^{k-2}(v^*) \alpha^{k-2}(w) \cdots \alpha(w) \alpha (qe_1^{1/2}q) \alpha(w^*) \cdots \alpha^{k-2}(w^*) \alpha^{k-2}(v) \cdots \alpha(v) v  e_k^{1/4} \\
			&=  e_k^{1/4} v^* \alpha (v^*)  \cdots \alpha^{k-2}(v^*) \alpha^{k-2}(w) \cdots \alpha(w e_1^{1/2} w^*) \cdots \alpha^{k-2}(w^*) \alpha^{k-2}(v) \cdots \alpha(v) v  e_k^{1/4} \\
			&\approx_{\eps'}  e_k^{1/4} v^* \alpha (v^*)  \cdots \alpha^{k-2}(v^*) \alpha^{k-2}(w) \cdots \alpha^2(q e_1^{1/2} q) \cdots \alpha^{k-2}(w^*) \alpha^{k-2}(v) \cdots \alpha(v) v  e_k^{1/4} \\
			& \vdots \\
			&\approx_{\eps'}  e_k^{1/4} v^* \alpha (v^*)  \cdots \alpha^{k-2}(v^*) \alpha^{k-1}(q e_1^{1/2} q) \alpha^{k-2}(v) \cdots \alpha(v) v  e_k^{1/4} \\
			&\approx_{\eps'}  e_k^{1/4} v^* \alpha (v^*)  \cdots \alpha^{k-2}(v^*) \alpha^{k-1}(q) e_k^{1/2} \alpha^{k-1}(q) \alpha^{k-2}(v) \cdots \alpha(v) v  e_k^{1/4} \\
			&\approx_{4k\eps'}  e_k^{1/2} \alpha^{k-1}(q) v^* \alpha (v^*)  \cdots \alpha^{k-2}(v^*) \alpha^{k-2}(v) \cdots \alpha(v) v \alpha^{k-1}(q)  e_k^{1/2} \\
			&=  e_k^{1/2} \alpha^{k-1}(q) p \alpha^{k-1}(q)  e_k^{1/2} \\
	\end{align*}
	so $ \| x_k x_k^* - e_k^{1/2} \alpha^{k-1}(q) p \alpha^{k-1}(q)  e_k^{1/2} \| < 5k\eps' $.
	
	Now, notice that $x_jx_k = 0$ for all $j,k$, and $x_j^*x_k = 0$ whenever $j \neq k$. 
	
	If $y \in F$ and $k \in \{2,3,\ldots,n\}$, we have
	\begin{align*}
	y x_k &= y e_k^{1/4} v^* \alpha (v^*)  \cdots \alpha^{k-2}(v^*) \alpha^{k-2}(w) \cdots \alpha (w) w  e_1^{1/4} \\
	&\approx_{\eps'}  e_k^{1/4} y v^* \alpha (v^*)  \cdots \alpha^{k-2}(v^*) \alpha^{k-2}(w) \cdots \alpha (w) w  e_1^{1/4} \\
	&\approx_{3\eps'}  e_k^{1/4}  v^* \alpha (y) \alpha (v^*)  \cdots \alpha^{k-2}(v^*) \alpha^{k-2}(w) \cdots \alpha (w) w  e_1^{1/4} \\
	&\approx_{3\eps'}  e_k^{1/4}  v^*  \alpha (v^*) \alpha^2 (y) \cdots \alpha^{k-2}(v^*) \alpha^{k-2}(w) \cdots \alpha (w) w  e_1^{1/4} \\
	& \vdots \\
	&\approx_{3\eps'}  e_k^{1/4}  v^*  \alpha (v^*)  \cdots \alpha^{k-2}(v^*) \alpha^{k-1}(y) \alpha^{k-2}(w) \cdots \alpha (w) w  e_1^{1/4} \\
	&=  e_k^{1/4}  v^*  \alpha (v^*)  \cdots \alpha^{k-2}(v^*) \alpha^{k-2}(\alpha(y) w) \cdots \alpha (w) w  e_1^{1/4} \\
	&\approx_{3\eps'}  e_k^{1/4}  v^*  \alpha (v^*)  \cdots \alpha^{k-2}(v^*) \alpha^{k-2}( w y) \cdots \alpha (w) w  e_1^{1/4} \\
	& \vdots \\
	&\approx_{3\eps'}  e_k^{1/4}  v^*  \alpha (v^*)  \cdots \alpha^{k-2}(v^*) \alpha^{k-2}( w ) \cdots \alpha (w) w y e_1^{1/4} \\
	&\approx_{\eps'}  e_k^{1/4}  v^*  \alpha (v^*)  \cdots \alpha^{k-2}(v^*) \alpha^{k-2}( w ) \cdots \alpha (w) w  e_1^{1/4} y \\
	&= x_k y
	\end{align*}
	so $\| y x_k - x_k y \| < 6(k-1)\eps' + 2\eps'$.
	
	Since $A$ has stable rank 1, the projection $1-q = 1-w^*w$ is Murray - von Neumann equivalent to $1- \alpha(q) = 1 - ww^*$, and likewise to $1-\alpha^k(q)$ for $k=1,2,\ldots,n-1$. This is the only place in which we need the assumption that $A$ has stable rank 1; if $\alpha$ is approximately inner, then one could take $q = 1$, in which case all the projections of the form $1-\alpha^k(q)$ are zero, hence equivalent trivially. (One could have replaced stable rank 1 in the hypothesis of the theorem by a strengthening of the notion of strong tracial approximate innerness which asks that $1-q$ be Murray - von Neumann equivalent to $1-\alpha(q)$.)
	So
	\[
	\left \|\left ( 1 - x_2^*x_2 - \sum_{k=2}^n x_k x_k^* \right ) - \left (  1 - \sum_{k=1}^{n} e_k^{1/2} \alpha^{k-1}(q) p \alpha^{k-1}(q)  e_k^{1/2} \right ) \right  \| < \eps 
	.
	\]
	and
	\begin{align*}
	 1 - \sum_{k=1}^{n} e_k^{1/2} \alpha^{k-1}(q) p \alpha^{k-1}(q)  e_k^{1/2}  
	&= \left ( 1 - \sum_{k=1}^{n} e_k \right )  +
	\sum_{k=1}^{n} e_k^{1/2} \left ( 1 - \alpha^{k-1}(q) \right )  e_k^{1/2}  + \\
	& \quad +
	\sum_{k=1}^{n} e_k^{1/2} \left ( \alpha^{k-1}(q) (1-p) \alpha^{k-1}(q) \right )  e_k^{1/2} \\ 
	& \precsim \left ( 1 - \sum_{k=1}^{n} e_k \right ) \oplus 
		\bigoplus_{k=1}^{n} e_k^{1/2} \left ( 1 - \alpha^{k-1}(q) \right )  e_k^{1/2}   \\
		& \quad \oplus
		\bigoplus_{k=1}^{n} e_k^{1/2} \left ( \alpha^{k-1}(q) (1-p) \alpha^{k-1}(q) \right )  e_k^{1/2} 
	\end{align*}
	Thus, the class of  $1 - \sum_{k=1}^{n} e_k^{1/2} \alpha^{k-1}(q) p \alpha^{k-1}(q)  e_k^{1/2}$ is bounded in the Cuntz semigroup by a direct sum of $2n+1$ elements each of which is Cuntz-subequivalent to $a_0$, and therefore is bounded by the class of $a$.  
	
	It follows that $A$ is tracially $\Zh$-absorbing, as required.
\end{proof}

\begin{Rmk}
	If in Theorem \ref{thm:tracially-Z-abs} we assume furthermore that $A$ is nuclear, then it follows from \cite[Theorem 4.1]{hirshberg-orovitz} that $A$ is $\Zh$-absorbing. This addresses in part \cite[Question 7.2]{AGP2019}, which asks whether there exists a simple separable unital nuclear $C^*$-algebras with positive radius of comparison which admits a finite group action with the weak tracial Rokhlin property but the action on the trace space is trivial. One might try to look for approximately inner examples, or even strongly tracially approximately inner ones, but we see here that those do not exist. This does not answer the question, even if we furthermore require that the action fixes the entire Elliott invariant, since uniqueness and existence for automorphisms break down in the case of positive radius of comparison (\cite{toms-counterexample,hirshberg-phillips-asymmetry}); however it shows that constructing such an example may be complicated.
\end{Rmk}

\section{Rokhlin dimension and divisibility conditions}
\label{section Rokhlin dimension and divisibility conditions}

Without the approximate innerness condition, there are many examples of non-$\mathcal{Z}$-stable $C^*$-algebras which admit actions with finite Rokhlin dimension with commuting towers. The obvious examples are commutative $C^*$-algebras: if $X$ is a compact metric space with finite covering dimension, and $G$ is a finitely generated nilpotent group acting freely in $X$, then the associated action $\alpha \colon G \to \aut(C(X))$ has finite Rokhlin dimension with commuting towers (\cite[Corollary 8.5]{SWZ}). In the simple nuclear setting, it was shown in \cite{AGP2019} that there exists a non $\Zh$-stable simple unital separable AH algebra along with an action of $\Z_2$ which has the Rokhlin property. Furthermore, the crossed product need not be $\Zh$-stable either: one can construct examples, along the lines of \cite{giol-kerr}, of minimal homeomorphisms of infinite dimensional compact metric spaces $X$ which have a Cantor minimal system as a factor and such that the crossed product is not $\Zh$-stable; such actions have the Rokhlin property. 

However, finite Rokhlin dimension with commuting towers is sufficient to deduce that the crossed product has a weaker regularity property. In \cite{DHTW}, based on ideas which appeared in \cite{HRW}, we constructed an example of a simple unital separable AH algebra which does admit a unital embedding of the Jiang-Su algebra, or in fact, even an embedding of the dimension drop algebra $I_{3,4}$. This type of phenomenon was studied in greater depth in \cite{Robert-Rordam}. 
We recall the definition.  We denote by $\Cu(A)$ the Cuntz semigroup of a $C^*$-algebra $A$.
\begin{Def} (Robert-R{\o}rdam)
	Let $A$ be a unital $C^*$-algebra. Let $m$ be a positive integer. We define $\Div_m(A)$ to be the least integer $n$ for which there exists $x \in \Cu(A)$ such that $m x \leq \left < 1_A \right > \leq nx$ (where we set $\Div_m(A)=\infty$ if no such $n$ exists).
\end{Def}
If there exists a unital homomorphism from $I_{m,m+1}$  into $A$ then $\Div_m(A) \leq m+1$, and the converse holds if $A$ has stable rank 1. (See \cite[Example 3.12]{Robert-Rordam}.) 

 The following shows that examples with such bad divisibility properties cannot occur as crossed products when the actions has finite Rokhlin dimension with commuting towers, provided the group has elements of arbitrarily large order.
\begin{Thm}
		Let $A$ be a separable unital $C^*$-algebra. Let $G$ be a residually finite group which has elements of arbitrarily large order. Let $\alpha \colon G \to \aut(A)$ be an action which has finite Rokhlin dimension with commuting towers. Then for any $m = 2,3,\ldots$, there is a unital homomorphism from $I_{m,m+1}$ into $A \rtimes_{\alpha} G$. In particular, $\Div_m(A \rtimes_{\alpha} G) \leq m+1$ for all $m \geq 2$.
\end{Thm}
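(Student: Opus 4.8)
The plan is to reduce the statement to a problem about a homogeneous $C^*$-algebra realized inside the central-sequence picture of the crossed product, and from there to an elementary piece of vector-bundle theory. Fix $m\ge 2$. Since $G$ has elements of arbitrarily large (finite) order, choose $g_0\in G$ of order $N$, with $N$ large (how large is dictated by the bundle step below). By Lemma~\ref{Lemma:restrict-to-subgroup} the restriction of $\alpha$ to $\langle g_0\rangle\cong\Z_N$ still has finite Rokhlin dimension with commuting towers, say $\dimrokc(\alpha_{g_0})\le d$. Hence there is a unital equivariant homomorphism $\psi\colon C(X_{\Z_N,d})\to F(A)$ out of the universal space, and since $A$ is unital we have $F(A)=A_\omega\cap A'$, so $\psi$ is an honest unital homomorphism into $A_\omega\cap A'\subseteq (A\rtimes_\alpha G)_\omega$.

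Next I would assemble a homogeneous algebra inside the ultrapower. In $A\rtimes_\alpha G$ we have the canonical unitary $u_{g_0}$, which satisfies $u_{g_0}^N=1$ and implements $\alpha_{g_0}$ by conjugation; since $\psi$ is equivariant, the pair $(\psi,u_{g_0})$ is a covariant representation of $(C(X_{\Z_N,d}),\sigma)$ over $\Z_N$, and by the universal property of the crossed product this yields a unital homomorphism $\Theta\colon B:=C(X_{\Z_N,d})\rtimes_\sigma\Z_N\to (A\rtimes_\alpha G)_\omega$. The content of the hypotheses is encoded in $B$: because $\Z_N$ acts freely on $X_{\Z_N,d}$, the crossed product $B$ is, by the finite-order analogue of Proposition~\ref{Prop:subhomogeneous}, an $N$-homogeneous algebra $B\cong\Gamma(\mathrm{End}\,V)$ for the rank-$N$ vector bundle $V=X_{\Z_N,d}\times_{\Z_N}\C[\Z_N]$ over the finite complex $Y=X_{\Z_N,d}/\Z_N$, whose dimension is at most $d$.

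The heart of the argument, and the step I expect to be the main obstacle, is to produce a unital homomorphism $I_{m,m+1}\to B$. I would obtain it by factoring through the canonical unital surjection $I_{m,m+1}\twoheadrightarrow M_m\oplus M_{m+1}$, $f\mapsto(f(0),f(1))$, and constructing a unital homomorphism $M_m\oplus M_{m+1}\to B=\Gamma(\mathrm{End}\,V)$. Such a homomorphism is precisely a splitting $V\cong (\C^m\otimes V^{(0)})\oplus(\C^{m+1}\otimes V^{(1)})$ of vector bundles over $Y$. This is where $N$ large and, crucially, $\gcd(m,m+1)=1$ enter: choosing ranks $a,b$ with $ma+(m+1)b=N$ and both large compared with $\dim Y$, and integers $s,t$ with $ms+(m+1)t=1$, one realizes the reduced $K$-theory classes $s\,([V]-N)$ and $t\,([V]-N)$ by honest bundles $V^{(0)},V^{(1)}$ of ranks $a,b$; then $\C^m\otimes V^{(0)}\oplus\C^{m+1}\otimes V^{(1)}$ has the same $K$-class and the same rank $N$ as $V$, hence is isomorphic to $V$ by the stable-range classification of bundles over the finite complex $Y$. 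It is this step that genuinely uses that the orders are unbounded (rather than, say, divisible by $m(m+1)$, which the hypothesis need not provide), and the coprimality $\gcd(m,m+1)=1$ is exactly what removes any $K$-theoretic obstruction to the splitting.

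Finally I would transport and descend. Composing, $I_{m,m+1}\twoheadrightarrow M_m\oplus M_{m+1}\to B\xrightarrow{\Theta}(A\rtimes_\alpha G)_\omega$ gives a unital homomorphism into the ultrapower; since the dimension-drop algebra $I_{m,m+1}$ is semiprojective, this lifts to a genuine unital homomorphism $I_{m,m+1}\to A\rtimes_\alpha G$, which is the asserted map. For the divisibility conclusion, let $x$ be the class in $\Cu(M_m\oplus M_{m+1})$ of a pair of minimal projections; then $mx\le\langle 1\rangle\le(m+1)x$, and pushing $x$ forward through the unital homomorphism $M_m\oplus M_{m+1}\to A\rtimes_\alpha G$ gives $\Div_m(A\rtimes_\alpha G)\le m+1$, as in \cite[Example 3.12]{Robert-Rordam}.
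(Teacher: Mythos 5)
Your proposal follows the same architecture as the paper's proof: restrict to the cyclic subgroup generated by a large-order element (Lemma \ref{Lemma:restrict-to-subgroup}), combine the universal space with the canonical unitary to get a covariant pair, map the resulting homogeneous crossed product into the ultrapower, embed $I_{m,m+1}$ into that homogeneous algebra, and descend by semiprojectivity; and your bundle-splitting argument is a correct, self-contained substitute for the paper's citation of \cite[Theorem 6.2]{Dadarlat-Toms} \emph{in the case you treat}. The gap is in your very first step: you choose $g_0$ of large \emph{finite} order $N$ and work with $C(X_{\Z_N,d})\rtimes_\sigma\Z_N$. The hypothesis ``elements of arbitrarily large order'' is meant to include elements of infinite order, and in the motivating examples of this section (minimal homeomorphisms, i.e.\ $G=\Z$, and free actions of finitely generated nilpotent groups) it is \emph{only} witnessed by such elements; for a torsion-free group your proof never gets started. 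The paper instead picks $g_0$ of order at least $n$, possibly infinite, sets $H=\langle g_0\rangle$, and when $H\cong\Z$ it uses the crossed product of $C(X_{\Z,n\Z,2d+1})$ by the $n$-periodic action of $\Z$, which by Proposition \ref{Prop:subhomogeneous} is $n$-homogeneous over $(X_{\Z,n\Z,2d+1}/\Z_n)\times\T$.

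Moreover, this is not a case you can recover by simply rerunning your bundle argument over the base $Y\times\T$, $Y=X/\Z_n$; the obstruction is real. The crossed product $B=C(X)\rtimes_\sigma\Z$ of the periodic action is locally trivial $n$-homogeneous, but it is in general \emph{not} the section algebra of $\mathrm{End}(W)$ for any vector bundle $W$: here $u^n$ is central, local trivializations of $X\to Y$ give transition functions $\mathrm{Ad}(S_z^{c_{ij}})$ (where $S_z$ is the cyclic shift matrix with $z$ in the corner and $c_{ij}\in\Z_n$ are transition functions of the principal bundle $X\to Y$), and the resulting Dixmier--Douady class of $B$ is $\beta([X])\smile\theta\in H^3(Y\times\T;\Z)$, with $[X]\in H^1(Y;\Z_n)$ the class of the principal bundle, $\beta$ the Bockstein, and $\theta$ the generator of $H^1(\T;\Z)$. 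For the universal spaces in play this class is nonzero: $X_{\Z,n\Z,D}$ is the Milnor join $(\Z_n)^{*(D+1)}$, so for $D\geq 3$ (i.e.\ whenever $\dimrokc(\alpha)\geq 1$) the classifying map of $Y$ is highly connected and $\beta([X])\smile\theta$ has order exactly $n$. (A K-theory check confirms this: for the antipodal action on $S^3$, Pimsner--Voiculescu gives $K_0(C(S^3)\rtimes\Z)\cong\Z^2$, while $K^0(\R P^3\times\T)$ has $2$-torsion, so the crossed product is not Morita equivalent to functions on its spectrum.) This does not merely break the identification $B\cong\Gamma(\mathrm{End}V)$; it rules out your factorization entirely: a unital homomorphism $M_m\oplus M_{m+1}\to\Gamma(\mathcal{A})$ would split $\mathcal{A}$ into full corners of the form $M_m\otimes\mathcal{C}_1$ and $M_{m+1}\otimes\mathcal{C}_2$ with fiber ranks $a,b$ satisfying $ma+(m+1)b=n$, and since the Dixmier--Douady class is invariant under passage to full corners and is killed by the fiber rank, its order would have to divide both $a$ and $b$ (or divide $n/m$ resp.\ $n/(m+1)$ in the degenerate cases) --- impossible when that order is $n$. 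So in the infinite-order case any unital embedding of $I_{m,m+1}$ must genuinely use the interval direction rather than factor through evaluation at the endpoints; that is precisely what \cite[Theorem 6.2]{Dadarlat-Toms} provides, and it is why the paper invokes it instead of arguing with vector bundles.
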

\begin{proof}
	By \cite[Theorem 6.2]{Dadarlat-Toms}, there exists a constant $L$ such that for any compact metric space $Y$ and for any coprime numbers $p,q$, if $n \geq L(pq)^2(\dim(Y)+2)$ and $A$ is a locally trivial $n$-homogeneous algebra with spectrum $Y$ then there exists a unital homomorphism from $I_{p,q}$ to $A$. (The hypotheses of \cite[Theorem 6.2]{Dadarlat-Toms} are more general, as it also covers recursive subhomogeneous algebras, but we do not need this generality here.) Since the dimension drop algebra $I_{m,m+1}$ is semiprojective, it suffices to show that it admits a unital homomorphism into the ultrapower $(A \rtimes_{\alpha} G)_{\omega}$.
	
	Set $d = \dimrokc(\alpha)$.  Given $m \geq 2$, and with the constant $L$ as above, fix $n \geq L m^2 (m+1)^2(2d+4)$. Pick an element $g_0 \in G$ with order at least $n$. Set $H = \left < g_0 \right >$. We know that $\dimrokc(\alpha|_H) \leq 2d+1$.
	
	Because $A \rtimes_{\alpha|_{H}} H \subseteq A \rtimes_{\alpha} G$, it suffices to show that there exists a unital homomorphism from $I_{m,m+1}$ into $(A \rtimes_{\alpha|_{H}} H)_{\omega}$.

	Let $u = u_{g_0}$ be the canonical unitary in $A \rtimes_{\alpha} H$ (which we identify with its image in the ultrapower). Pick an equivariant unital homomorphism $\psi \colon C(X_{\Z,n\Z,2d+1}) \to A_{\omega} \cap A'$. (The fact that the image is in the relative commutant of $A$ does not play a role here.) Because $u\varphi (f) u^* = \sigma(f)$ for all $f \in  C(X_{\Z,n\Z,2d+1})$, the pair $(\varphi,u)$ defines a unital homomorphism from $C(X_{\Z,n\Z,2d+1}) \rtimes_{\sigma} \Z$ to  $A_{\omega} \rtimes_{\alpha} H \subseteq (A \rtimes_{\alpha} H)_{\omega}$. As $C(X_{\Z,n\Z,2d+1}) \rtimes_{\sigma} \Z$ is $n$-homogeneous over the $(2d+2)$-dimensional space $X_{\Z,n\Z,2d+1}/\Z \times \T$, it admits a unital homomorphism from $I_{m,m+1}$, and therefore so does $(A \rtimes_{\alpha} H)_{\omega}$, as required.
\end{proof}

\section{Nearly approximately inner automorphisms}
\label{section Nearly approximately inner automorphisms}
In this section, we introduce a notion which is weaker than approximate innerness, and is sufficient for deducing that the crossed product is $\Zh$-stable (even when the $C^*$-algebra acted on is not). We restrict our attention just to crossed products by $\Z$ with the Rokhlin property. It seems plausible that those results should hold for more general groups and for finite Rokhlin dimension with commuting towers.

\begin{Def}
	\label{Def: nearly approximately inner}
	Let $A$ be a unital $C^*$-algebra, and let $\alpha \in \aut(A)$. We say that $\alpha$ is \emph{nearly approximately inner} if there exists a dense subset $A_0$ of the unit ball of $A$ and a sequence $n_l \to \infty$ of positive integers such that for any $\eps>0$ and any finite $F \subset A_0$ there exists $l_0 \in \N$ such that for all $l \geq l_0$, there exist $v_l \in U(A)$ such that $\|v_la v_l^*  - \alpha^{n_l}(a)\| < \eps$ for all $a \in \alpha^{k}(F)$, for $k \in \{-7 n_l, -7 n_l +1 \ldots , 7 n_l \}$.
\end{Def}
The choice of the factor $7$ in the definition is of course artificial, and simply comes up as a technical detail in the proof of Theorem \ref{Thm:NAI-Z-stable} (where no serious attempt was made to optimize the constant).  We mention two stronger conditions which may seem more natural more natural, each of which implies the technical condition in Definition \ref{Def: nearly approximately inner}.

\begin{Prop}
	\label{Prop: conditions which imply NAI}
	Let $A$ be a unital $C^*$-algebra, and let $\alpha \in \aut(A)$. Each of the following conditions implies that $\alpha$ is nearly approximately inner.
		\begin{enumerate}
			\item There exists $n > 0$ such that $\alpha^n$ is approximately inner.
			\item 
			\label{cond:AP}
			There exists a dense subset $A_0$ of the unit ball of $A$ and a sequence $n_l \to \infty$ of positive integers such that for any $\eps>0$ and any finite $F \subset A_0$ there exists $l_0 \in \N$ such that for all $l \geq l_0$, there exist $v_l \in U(A)$ such that $v_l\alpha(v_l)^* \in Z(A)$ and $\|v_la v_l^*  - \alpha^{n_l}(a)\| < \eps$ for all $a \in F$.
		\end{enumerate}
\end{Prop}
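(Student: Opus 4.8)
The plan is to treat the two sufficient conditions separately, since they call for rather different arguments. Condition (\ref{cond:AP}) turns out to be almost immediate once one extracts the right algebraic consequence of the centrality hypothesis, whereas condition (1) requires an explicit telescoping construction of the approximating unitaries. In both cases I would take $A_0$ to be the whole unit ball of $A$, so that each $\alpha^k(F)$ automatically stays inside it.

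For condition (\ref{cond:AP}), the key observation I would isolate is that the requirement $v_l \alpha(v_l)^* \in Z(A)$ is exactly what forces $\mathrm{Ad}(v_l)$ to commute with $\alpha$. Indeed, writing $z = v_l\alpha(v_l)^* \in Z(A)$ (a central unitary), one has $\alpha(v_l) = z^* v_l$, and hence for every $x \in A$,
\[
\alpha(v_l x v_l^*) = \alpha(v_l)\alpha(x)\alpha(v_l)^* = z^* v_l \alpha(x) v_l^* z = v_l \alpha(x) v_l^* ,
\]
using that $z$ is central and unitary. Thus $\mathrm{Ad}(v_l)\circ \alpha = \alpha \circ \mathrm{Ad}(v_l)$, and by iterating, $\mathrm{Ad}(v_l)$ commutes with $\alpha^k$ for every $k \in \Z$. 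Consequently, for $a \in F$ and any $k$,
\[
\| v_l \alpha^k(a) v_l^* - \alpha^{n_l}(\alpha^k(a)) \| = \| \alpha^k(v_l a v_l^*) - \alpha^k(\alpha^{n_l}(a)) \| = \| v_l a v_l^* - \alpha^{n_l}(a) \| ,
\]
since $\alpha^k$ is isometric. Hence the same $A_0$, $(n_l)$ and $v_l$ supplied by condition (\ref{cond:AP}) already witness near approximate innerness: approximation merely on $F$ propagates for free to $\alpha^k(F)$ for all $k$, in particular for $k \in \{-7n_l,\ldots,7n_l\}$. There is essentially no obstacle here beyond spotting this centrality computation.

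For condition (1), I would set $\beta = \alpha^n$ and take $n_l = nl$, which tends to infinity. Given a finite $F$ and $\eps > 0$, the approximate innerness of $\beta$ lets me choose, for each large $l$, a unitary $u$ with $\|u b u^* - \beta(b)\| < \delta$ for all $b$ in a finite set $S$ to be specified, with $\delta$ small. I then define $v_l = u\,\beta(u)\,\beta^2(u)\cdots\beta^{l-1}(u)$ and estimate $\mathrm{Ad}(v_l)$ by telescoping from the inside out: setting $x_l = a$ and $x_j = \beta^{j}(u)\,x_{j+1}\,\beta^{j}(u)^*$ for $j = l-1,\ldots,0$, so that $x_0 = v_l a v_l^*$, an induction using the identity $\beta^{j}(u)\,y\,\beta^{j}(u)^* = \beta^{j}\bigl(u\,\beta^{-j}(y)\,u^*\bigr)$ shows that the partial conjugates track $\beta$-translates of $a$ with an error growing by only $\delta$ at each of the $l$ steps. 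This yields $\|v_l a v_l^* - \beta^l(a)\| \leq l\delta$, i.e. the desired $\alpha^{n_l}=\beta^l$ approximation once $\delta$ is small enough (say $\delta < \eps/(2l)$).

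The main point to get right — and the only real obstacle — is the bookkeeping of which elements must lie in $S$. Running the telescoping on a target element $a = \alpha^k(a_0)$ with $a_0 \in F$ and $|k| \leq 7n_l = 7nl$ forces $u$ to approximate $\beta$ on elements of the form $\alpha^{k + nm}(a_0)$ with $|m| \leq l-1$; since $|k + nm| \leq 7nl + n(l-1) < 8nl$, it suffices to take $S = \{\alpha^i(a_0) \mid a_0 \in F,\ |i| \leq 8nl\}$. As this $S$ is finite and $\delta$ can be taken as small as we wish (by approximate innerness of $\beta$), the construction goes through for all sufficiently large $l$, which is exactly what Definition \ref{Def: nearly approximately inner} demands. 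The generous factor $7$ in that definition is what leaves room for the extra shift of size $n(l-1)$ introduced by the telescoping.
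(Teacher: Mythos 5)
Your proof is correct and follows essentially the same route as the paper: for condition (2) the paper makes the identical centrality computation (that $v_l\alpha(v_l)^* \in Z(A)$ forces $\mathrm{Ad}(v_l)$ to commute with $\alpha$, so the estimate on $F$ propagates verbatim to $\alpha^k(F)$ for every $k$), and for condition (1) the paper simply sets $n_l = nl$ and declares the verification easy, which your telescoping construction carries out in detail. The only blemish is your opening remark that in both cases one may take $A_0$ to be the whole unit ball of $A$ --- for condition (2) one must keep the dense subset $A_0$ supplied by the hypothesis, as your actual argument correctly does.
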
 
	\begin{proof}
		If there exists $n>0$ such that $\alpha^n$ is approximately inner, the definition is easily seen to be satisfied by setting $n_l = n\cdot l$.
		
		For the second condition, it suffices to check that if $v_l\alpha(v_l)^*$ is in the center of $A$, and $\|v_la v_l^*  - \alpha^{n_l}(a)\| < \eps$ for all $a \in F$, then in fact $\|v_la v_l^*  - \alpha^{n_l}(a)\| < \eps$ for any $a \in \alpha^k(F)$ for any $k \in \Z$. Notice that for any $x \in F$, because $(v_l^* \alpha(v_l)) x  = x (v_l^* \alpha(v_l))$, we have
		$$
		\alpha(v_l) x \alpha(v_l)^* = v_l (v_l^* \alpha(v_l)) x (\alpha(v_l)^* v_l) v_l^* = v_l x v_l
		$$ 
		Thus, if $a = \alpha^k(b)$ for $b \in F$ then
		$$
		\|v_l a v_l^*-  \alpha^{n_l}(a)\| = \| \alpha^k(v_l) a \alpha^k(v_l^*)  - \alpha^{n_l}(a)\|= \| \alpha^k(v_l b v_l^*  - \alpha^{n_l}(b))\| < \eps
		$$
		as required.
	\end{proof}
We now provide some interesting examples of automorphisms which are nearly approximately inner, but such that no nontrivial power is approximately inner. In fact, our examples satisfy condition \ref{cond:AP} from Proposition \ref{Prop: conditions which imply NAI}, with $v = 1$. Those can all be viewed as generalized odometers, though acting on noncommutative $C^*$-algebras. The most interesting of those are perhaps ones which act on the reduced group $C^*$-algebras of the Dixmier-Lance groups, whose group factors have property $\Gamma$ but are not McDuff.
\begin{Exl} We provide here three classes of increasingly complicated examples.
	\begin{enumerate}
	\item  The simplest example we consider arises from a translation on a compact group. Suppose $G$ is a compact metrizable group. Recall that by a theorem of Birkhoff and Kakutani (\cite[Theorem 8.3]{Hewitt-Ross}), $G$ admits a left invariant metric. Fix $g \in G$ of infinite order and set $\alpha \in \aut(C(G))$ by $\alpha(f)(h) = f(g^{-1}h)$. Then one readily checks that $\alpha$ satisfies condition \ref{cond:AP} of Proposition \ref{Prop: conditions which imply NAI} (where we take $A_0$ to be a dense set consisting of Lipschitz functions with respect to an invariant metric and $v = 1$). The odometer action arises when $G$ is a compact abelian group homeomorphic to the Cantor set, and the subgroup generated by $g$ is dense. In this case, the action furthermore satisfies the Rokhlin property. The crossed product by an odometer action is well understood from the perspective of Elliott's classification program, and in particular it is known to be $\Zh$-stable; however, we use this as a basis for constructing more complicated examples in which our methods give genuinely new results, to the best of our knowledge. To this end, let us present this example in an equivalent way. Suppose $G$ is a countable discrete abelian group. Let $\gamma \in \widehat{G}$ be an element of infinite order. Then $C^*(G) \cong C(\widehat{G})$ and if we denote by $\{u_g \mid g \in G\}$ the generators of $C^*(G)$, then the automorphism $\alpha \in \aut(C^*(G))$ given by translation by $\gamma$ on $C(\widehat{G})$ is given by $\alpha(u_g) = \gamma(g)u_g$. If we think of $C^*(G) \cong C^*_r(G)$ as being represented on $l^2(G)$ via the regular representation, then $\alpha(x) = UxU^*$, where $U \in B(l^2(G))$ is the diagonal operator given on the standard basis $\{\delta_g\}_{g \in G}$ by $U\delta_g = \gamma(g)\delta_g$. In this picture, it may be more convenient to take $A_0$ to be a dense subset of the unit ball of the group algebra $\C G$.
	\item Suppose $G$ is a countable discrete abelian group. Let $\gamma \in \widehat{G}$ be an element of infinite order. Let $\varphi \colon G*G \to G$ be the canonical quotient given by identifying the two copies of $G$. 	Consider the action $\alpha$ on $C^*_r(G * G)$ given by $\alpha (u_h) = \gamma(\varphi(h))u_h$ for any $h \in G$. (This is well defined on the reduced group $C^*$-algebra, as it is also given by conjugation by a diagonal unitary $U \in B(l^2(G * G))$.) To give an example which furthermore has the Rokhlin property, assume that $\widehat{G}$ is homeomorphic to the Cantor set, and in place of $C^*_r(G * G)$, consider $C_r^*(G) \otimes C^*_r(G * G) \cong C^*_r(G \times (G*G))$ (the tensor product is unique, as $C^*_r(G)$ is abelian), with the action $\alpha$ defined in a similar fashion, noting that there is also a canonical quotient $G \times (G * G) \to G$. The restriction to $C^*_r(G) \otimes 1_{ C^*_r(G * G) }$ has the Rokhlin property, and since this is in the center of $C^*_r(G) \otimes C^*_r(G * G)$, so does $\alpha$. 
	\item In \cite{Dixmier-Lance}, Dixmier and Lance constructed examples of ICC groups $K$ such that the group von Neumann algebra $L(K)$ has property $\Gamma$ but is not McDuff (that is, $L(K) \not \cong L(K) \overline{\otimes}R$ when $R$ is the hyperfinite II$_1$ factor; or as it is phrased in \cite{Dixmier-Lance}, there are nontrivial central sequences, and all central sequences are hypercentral). In particular, for any such group, the reduced group $C^*$-algebra $C^*_r(K)$ is not $\Zh$-stable (and is not nuclear). The class of $C^*$-algebras of the form $C^*_r(K)$ is thus an interesting set of examples to consider. Those groups are constructed as follows. Let $H$ be a nontrivial countable abelian group. Let $G_1,G_2,\ldots$ and $H_1,H_2,\ldots$ be sequences of groups with $G_k \cong \Z$ for all $k \in \N$ and $H_k \cong H$ for all $k \in \N$. We let $K$ be the group generated by copies of all of those groups subject to the following relations:
		\begin{enumerate}
			\item The subgroup $H_j$ commutes with the subgroup $H_k$ for all $j \neq k$.
			\item Whenever $k \leq j$, the subgroup $G_k$ commutes with the subgroup $H_j$.
		\end{enumerate}
	We now construct an automorphism of $C^*_r(K)$ for a Dixmier-Lance group which satisfies condition \ref{cond:AP} from Proposition \ref{Prop: conditions which imply NAI} and furthermore has the Rokhlin property. We choose an abelian group $H$ such that $\widehat{H}$ is homeomorphic to the Cantor set and has an element $\gamma \in \widehat{H}$ which generates a dense subgroup. We have a canonical surjective homomorphism  $\varphi \colon K \to H$ given by sending all the subgroups $G_k$ to the trivial element and identifying all copies of $H$. We define $\alpha$ by $\alpha(u_g) = \gamma (\varphi(g)) u_g$, which uniquely extends to $C^*_r(K)$ since it is given by $\alpha(x) = UxU^*$ for $U \in B(l^2(K))$ defined by $U \delta_g = \gamma (\varphi(g)) \delta_g$. One verifies that condition \ref{cond:AP} from Proposition \ref{Prop: conditions which imply NAI} holds as before. As for the Rokhlin property, we note that for any $k \in \N$, the restriction of $\alpha$ to the invariant commutative subalgebra $C^*_r(H_k) \subset C^*_r(K)$ has the Rokhlin property, being an odometer action. For any $n \in \N$ and any $k \in \N$, let $\{e^{(k)}_{0,j}\}_{j=0,1,\ldots,n-1} \cup \{ e^{(k)}_{1,j}\}_{j=0,1,\ldots,n}$ be Rokhlin towers in $C^*_r(H_k)$. Those form a central sequence of Rokhlin towers in $C^*_r(K)$, and thus the action $\alpha$ on $C^*_r(K)$ has the Rokhlin property.
	\end{enumerate}
\end{Exl}

We recall the following characterization of the existence of a unital homomorphism from a dimension drop algebra from \cite[Proposition 5.1]{rordam-winter} (see also \cite[Theorem 4.8]{gardella-hirshberg}).

\begin{Prop} 
	\label{Prop:dim-drop-embedding}
	Let $A$ be a unital $C^*$-algebra. The following are equivalent.
	\begin{enumerate}
		\item The dimension drop algebra $I_{n,n+1}$  admits a unital homomorphism into $A$.
		\item There exists $\eps>0$ and positive mutually orthogonal and mutually Cuntz-equivalent elements $b_1,b_2,\ldots,b_n \in A$ such that $1-\sum_{k=1}^n b_k \precsim (b_1 - \eps)_+$ (where $\precsim$ denotes Cuntz subequivalence).
	\end{enumerate}
\end{Prop}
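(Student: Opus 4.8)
The plan is to prove the two implications separately, the forward direction being a matter of exhibiting explicit witnesses and the reverse direction carrying the real content. Throughout I identify elements of $I_{n,n+1}$ with functions $[0,1] \to M_n \otimes M_{n+1}$ and fix matrix units $\{e_{ij}\}$ for $M_n$ and $\{f_{kl}\}$ for $M_{n+1}$.

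For (1) $\Rightarrow$ (2), since a unital $*$-homomorphism preserves positivity, orthogonality, mutual Cuntz equivalence, the unit, and the relation $\precsim$, it suffices to produce the required elements inside $I_{n,n+1}$ itself and then apply the given homomorphism. I would take $b_k$ to be the constant projection $e_{kk} \otimes 1_{n+1}$ on $[0,\tfrac12]$ (which lies in $M_n \otimes 1$ at $t=0$, as required) and then deform it on $[\tfrac12,1]$ to the projection $1_n \otimes f_{kk}$ at $t=1$ (which lies in $1_n \otimes M_{n+1}$). The $n$ resulting elements are mutually orthogonal, and a single choice of deformation path, permuted by the index $k$, makes them mutually Cuntz equivalent, since the connecting partial isometries $e_{k1}\otimes 1$ at $t=0$ and $1_n \otimes f_{k1}$ at $t=1$ extend across the interval. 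By construction $1 - \sum_k b_k$ vanishes on $[0,\tfrac12]$ and equals $1_n \otimes f_{n+1,n+1}$ at $t=1$, which is Cuntz equivalent to $b_1$; shaping $b_1$ near $t=1$ so that $(b_1-\eps)_+$ still dominates this deficit at every fiber gives the strict subequivalence.

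For (2) $\Rightarrow$ (1), I would assemble the homomorphism from the Cuntz-theoretic data. First I would use the strict inequality $1 - \sum_k b_k \precsim (b_1-\eps)_+$ together with the comparison Lemmas \ref{Lemma:bounded-Cuntz-equivalence} and \ref{lemma:bouding-minus-epsilon} to carve, inside a cut-down of $b_1$, an $(n+1)$-st positive element orthogonal to and Cuntz equivalent to $b_1, \ldots, b_n$ and absorbing the deficit, thereby obtaining $n+1$ mutually orthogonal, mutually Cuntz-equivalent positive elements whose sum is within $\eps$ of $1$. The upshot is two rigidifications of the same data: an approximate order zero copy of $M_n$ at one end and of $M_{n+1}$ at the other. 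I would then promote the mutual Cuntz equivalences to approximate matrix units, the $\eps$-room providing the slack needed to manufacture the off-diagonal connecting elements, and interpolate between the two ends to obtain a completely positive, approximately multiplicative, approximately unital map out of $I_{n,n+1}$ whose defect is controlled by $\eps$. Since $\gcd(n,n+1)=1$, the algebra $I_{n,n+1}$ is a prime dimension drop algebra with stable, liftable relations in the sense of Loring, so such an approximate representation can be perturbed to a genuine unital $*$-homomorphism $I_{n,n+1} \to A$ once $\eps$ is small enough.

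The main obstacle is exactly this upgrade at the heart of (2) $\Rightarrow$ (1): mutual Cuntz equivalence of the $b_k$ is considerably weaker than Murray--von Neumann equivalence, so there need not be honest matrix units or order zero maps $M_n, M_{n+1} \to A$ on the nose, and one cannot expect an exact connecting unitary path. What rescues the argument is the softness of the dimension drop algebra, which asks only for a continuous degeneration rather than rigid matrix structure, combined with the strictness encoded in $(b_1-\eps)_+$, which simultaneously supplies the orthogonal $(n+1)$-st column inside the first and the room to build the interpolating path. Making these two uses of the $\eps$-slack consistent, while keeping the endpoint relations exact so that the map lands in $M_n \otimes 1$ at $t=0$ and in $1_n \otimes M_{n+1}$ at $t=1$, is the delicate part of the construction.
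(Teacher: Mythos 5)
First, a point of comparison: the paper does not actually prove Proposition \ref{Prop:dim-drop-embedding}; it quotes it from \cite[Proposition 5.1]{rordam-winter} (see also \cite[Theorem 4.8]{gardella-hirshberg}), so your attempt can only be measured against those arguments. Your direction (1)$\Rightarrow$(2) is correct in outline and is essentially how one proceeds: push explicit witnesses forward from $I_{n,n+1}$, keeping in mind that the $b_k$ cannot be projections along the whole path (their fibre rank must drop from $n+1$ at $t=0$ to $n$ at $t=1$), and that orthogonality, the equivalences, and the deficit estimate must all be arranged by one compatible family of paths.

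Your direction (2)$\Rightarrow$(1), however, rests on a step that is not merely unproved but false. You propose to extract from (2) a family of $n+1$ mutually orthogonal, mutually Cuntz-equivalent positive elements whose sum is within $\eps$ of $1$ (equivalently, an approximately unital order zero copy of $M_{n+1}$). No such family can exist in \emph{any} unital $C^*$-algebra satisfying (2), because it does not exist in $A = I_{n,n+1}$ itself, and $I_{n,n+1}$ satisfies (2) by the implication (1)$\Rightarrow$(2) applied to the identity map. Indeed, evaluation at $t=0$ gives a unital homomorphism $I_{n,n+1} \to M_n$; homomorphisms preserve positivity, orthogonality and Cuntz subequivalence, and in $M_n$ Cuntz equivalence is equality of rank. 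You would thus obtain $n+1$ pairwise orthogonal positive matrices in $M_n$ of one common rank $r \geq 1$ whose sum is invertible, forcing $(n+1)r = n$, which has no integer solution. (Your first move is already self-contradictory as written: a nonzero positive element ``inside a cut-down of $b_1$'' lies in the hereditary subalgebra $\overline{(b_1-\eps)_+ A (b_1-\eps)_+}$ and can never be orthogonal to $b_1$.) The point you are missing is that the hypothesis $1 - \sum_k b_k \precsim (b_1-\eps)_+$ does not make the deficit a further equivalent column; it embeds the deficit into a \emph{corner of the first column}, and this asymmetry is precisely what distinguishes $I_{n,n+1}$ from $M_{n+1}$. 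Accordingly, the proof in \cite{rordam-winter} does not interpolate between an $M_n$- and an $M_{n+1}$-structure; it realizes $I_{n,n+1}$ as a universal $C^*$-algebra generated by the image of a completely positive contractive order zero map $\varphi \colon M_n \to I_{n,n+1}$ together with a single element $s$ implementing a subequivalence of $1 - \varphi(1_n)$ into the corner determined by $\varphi(e_{11})$ (roughly, $s^*s = 1 - \varphi(1_n)$ with $ss^*$ in the hereditary subalgebra generated by $\varphi(e_{11})$), and then uses R{\o}rdam's comparison lemma (the paper's Lemma \ref{lemma:bouding-minus-epsilon}) and the $\eps$-room in (2) to produce exactly this data from the elements $b_1,\ldots,b_n$. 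Your closing paragraph correctly senses that Cuntz equivalence is too weak to yield matrix units, but the ``softness'' that rescues the argument is this presentation theorem, not an approximate copy of $M_{n+1}$; with the false $(n+1)$-element reduction removed, your argument has no route to the homomorphism.
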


\begin{Cor}
	Let $A$ be a separable $C^*$-algebra. Then the following are equivalent.
	\begin{enumerate}
		\item $A \cong A \otimes \Zh$.
		\item There exists $n \geq 2$, $\eps>0$ and positive mutually orthogonal and mutually Cuntz-equivalent elements $b_1,b_2,\ldots,b_n \in F(A)$ such that $1-\sum_{k=1}^n b_k \precsim (b_1 - \eps)_+$.
	\end{enumerate}
\end{Cor}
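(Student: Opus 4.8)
The plan is to read condition (2) through Proposition \ref{Prop:dim-drop-embedding}, which identifies it --- applied to the unital $C^*$-algebra $F(A)$ rather than to $A$ --- with the existence of a unital homomorphism $I_{n,n+1} \to F(A)$. With this reformulation in hand, both implications reduce to the standard dictionary between $\Zh$-stability of $A$ and the structure of the corrected central sequence algebra $F(A)$, so that the only new content beyond the cited results is the translation between the two descriptions.

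For the implication (2) $\Rightarrow$ (1), I would first invoke Proposition \ref{Prop:dim-drop-embedding}, now with $F(A)$ in the role of the ambient unital algebra, to convert the elements $b_1,\dots,b_n$ of (2) into a unital homomorphism $I_{n,n+1} \to F(A)$. Since $n \geq 2$, the dimension drop algebra $I_{n,n+1}$ is subhomogeneous (its irreducible representations have dimensions $n$, $n+1$ and $n(n+1)$) and has no characters, as all these dimensions exceed $1$. Lemma \ref{Lemma:DT09} then applies verbatim and yields $A \cong A \otimes \Zh$.

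For (1) $\Rightarrow$ (2), I would use the characterization of $\Zh$-stability from \cite[Proposition 4.4]{Kirchberg-abel}: since $A$ is separable and $A \cong A \otimes \Zh$, there is a unital homomorphism $\Zh \to F(A)$. It then suffices to precompose with a unital homomorphism $I_{2,3} \to \Zh$, whose existence is standard from the realization of $\Zh$ as an inductive limit of prime dimension drop algebras with unital connecting maps (one may take $I_{2,3}$ as the initial building block; see \cite{Jiang-Su,rordam-winter}). The composite is a unital homomorphism $I_{2,3} \to F(A)$, and a final application of Proposition \ref{Prop:dim-drop-embedding} (with $n=2$) produces orthogonal, mutually Cuntz-equivalent elements $b_1, b_2 \in F(A)$ together with an $\eps>0$ satisfying $1-(b_1+b_2)\precsim (b_1-\eps)_+$.

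The routine part is the bookkeeping; the only point that genuinely needs a citation rather than a one-line argument is the unital embedding $I_{2,3}\hookrightarrow\Zh$ (equivalently, a unital homomorphism from some $I_{n,n+1}$ into $\Zh$). I would also take care that Proposition \ref{Prop:dim-drop-embedding}, although stated for a general unital $C^*$-algebra, is being applied to $F(A)$, and that Lemma \ref{Lemma:DT09} requires exactly the absence of characters and the subhomogeneity of $I_{n,n+1}$ that the case $n\geq 2$ guarantees.
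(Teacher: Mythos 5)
Your proof is correct and follows exactly the route the paper intends: reading condition (2) via Proposition \ref{Prop:dim-drop-embedding} applied to the unital algebra $F(A)$, then using Lemma \ref{Lemma:DT09} for (2) $\Rightarrow$ (1) (the image of $I_{n,n+1}$, being a quotient, is still subhomogeneous without characters when $n \geq 2$), and Kirchberg's characterization of $\Zh$-absorption via a unital homomorphism $\Zh \to F(A)$ together with the unital embedding $I_{2,3} \hookrightarrow \Zh$ for (1) $\Rightarrow$ (2). This matches the argument the paper itself deploys in the proof of Theorem \ref{Thm:NAI-Z-stable}, so there is nothing to add.
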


\begin{Thm}
	\label{Thm:NAI-Z-stable}
	Let $A$ be a separable unital $C^*$-algebra. Suppose that $\alpha \in \aut(A)$ has the Rokhlin property and is nearly approximately inner.
		Then $A \rtimes_{\alpha} \Z$ is $\Zh$-stable.
\end{Thm}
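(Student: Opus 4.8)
The goal is to produce a unital homomorphism from some prime dimension drop algebra $I_{n,n+1}$ into $F(A \rtimes_{\alpha} \Z)$, which by the Corollary preceding the theorem (and Proposition \ref{Prop:dim-drop-embedding} applied in the central sequence algebra) will give $\Zh$-stability. The natural strategy is to combine two ingredients: the Rokhlin property of $\alpha$, which provides Rokhlin towers of projections in $F(A)$, and near approximate innerness, which lets us conjugate long strings of elements by unitaries that approximately implement a high power $\alpha^{n_l}$. Concretely, I would work in the crossed product $A \rtimes_{\alpha} \Z$ with canonical unitary $u$ implementing $\alpha$, pass to the ultrapower $(A \rtimes_{\alpha} \Z)_\omega$, and build, for a suitable $n$, positive mutually orthogonal and mutually Cuntz-equivalent elements $b_1,\dots,b_n$ in the relative commutant of $A \rtimes_{\alpha} \Z$ with $1 - \sum_k b_k \precsim (b_1-\eps)_+$.

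\textbf{Construction of the elements.}
First I would fix $m$, take a Rokhlin partition from $F(A)$ for a large tower length (the double tower version, with tower heights roughly $7n_l$ and $7n_l+1$ matching the factor $7$ in Definition \ref{Def: nearly approximately inner}), and fix the finite sets of elements to be controlled. For each $l$ in the near-innerness definition I would choose a unitary $v_l \in U(A)$ with $\|v_l a v_l^* - \alpha^{n_l}(a)\| < \eps$ for all $a$ in the prescribed orbit window $\alpha^k(F)$, $|k| \leq 7n_l$. The key idea is to use the unitary $u^{n_l} v_l^*$ (or an analogous product telescoped along the Rokhlin tower): the element $u^{n_l}$ shifts by $\alpha^{n_l}$ inside the crossed product, and conjugating by $v_l^*$ approximately undoes that shift on the chosen elements, so that $w_l := v_l^* u^{n_l}$ is a unitary that approximately commutes with the algebra $A$ on the relevant window. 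I would then cut down the Rokhlin projections into $n$ equal orthogonal pieces using powers of such approximately-central unitaries as partial-isometry implementers of Cuntz equivalences, so that $b_1,\dots,b_n$ are mutually orthogonal, mutually Cuntz-subequivalent (the Rokhlin tower structure gives the shift relating the $b_k$), and the leftover $1 - \sum b_k$ is dominated by a single tower rung, hence by $(b_1-\eps)_+$ once $n$ is large.

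\textbf{The main obstacle.}
The hard part will be the bookkeeping that reconciles the two sources of approximate commutation: the Rokhlin tower lives in $F(A)$ (central modulo the annihilator) but is only $\alpha$-equivariant, while the unitary $u^{n_l}v_l^*$ implements the shift only approximately and only over a finite orbit window, where the error accumulates along the tower. The factor $7$ and the requirement that $v_l$ control the whole window $|k| \leq 7n_l$ are exactly what is needed to absorb these telescoping errors: each time I apply $\alpha$ to push an element along the tower I must keep it inside the window on which $v_l$ is known to behave well, so I must verify that the total length of the telescoped products (roughly proportional to $n \cdot n_l$) stays within the controlled range and that the cumulative norm error stays below $\eps$. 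Once this estimate is carried out uniformly in $l$, passing to the limit along $\omega$ produces exact relations in $(A \rtimes_{\alpha}\Z)_\omega$, and a reindexing argument (as in Lemma \ref{Lemma:reindexation}) places the $b_k$ in $F(A \rtimes_{\alpha} \Z)$, completing the proof via the Corollary above.
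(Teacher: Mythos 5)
Your overall architecture does match the paper's: pass to $(A\rtimes_\alpha\Z)_\omega$, combine a long double Rokhlin tower $\{f_k\}$ with unitaries coming from near approximate innerness so as to verify the criterion of Proposition \ref{Prop:dim-drop-embedding} inside the central sequence algebra of the crossed product, and conclude via the Kirchberg $\eps$-test and Lemma \ref{Lemma:DT09}. Your key observation, that elements of the form $v^*u^{n_l}$ approximately commute with $A$ on the prescribed window and can serve as implementers of Cuntz equivalences between tower pieces, is exactly the paper's mechanism (up to the technical refinement that the correction must be rung-dependent, of the form $u^{-jn_l}\alpha^k(v^{j})$, rather than a power of one fixed unitary).

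The genuine gap is in your treatment of the remainder $1-\sum_k b_k$. Since the $b_k$ must approximately commute with $u$, and $u f_k u^* = f_{k+1}$, they cannot be blocks of Rokhlin projections: they must be ramped combinations $\sum_k h(k) f_k$ whose coefficients change by at most $O(\eps)$ per rung. Orthogonality of the $b_k$ then forces the remainder to consist of tent-shaped elements spread over at least $\sim 1/\eps$ consecutive rungs at each boundary between pieces, so the claim that the leftover is ``dominated by a single tower rung'' is false in general: whenever $A\rtimes_\alpha\Z$ has a tracial state (as in the Dixmier--Lance examples the theorem is aimed at), comparing dimension functions on the pairwise orthogonal, pairwise equivalent rungs rules out such a subequivalence. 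The fallback ``once $n$ is large'' is also unavailable: any implementer of a Cuntz equivalence must itself be approximately central, hence can only shift along the tower by multiples of $n_l$ (corrected by powers of $v$), and the window $|k|\le 7n_l$ in Definition \ref{Def: nearly approximately inner} caps those powers at $7$; this bounds the number of mutually equivalent pieces (it is also why the paper's tower has length $14n_l$, with $n=2$ pieces related by the shift $7n_l$, rather than length $7n_l$ as you propose). What is actually needed --- and what occupies roughly half of the paper's proof --- is a separately constructed, approximately central element $d=d_1+d_2$, built from $u^{\pm 2n_l}$ and $\alpha^k(v^{\mp 2})$, which translates the tent regions at the tower boundaries into the middle plateau of $b_1$, giving exactly $dd^*=1-b_1-b_2$ and $d^*d\le 2\bigl(b_1-\tfrac{1}{2}\bigr)_+$. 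Without this ingredient, the verification of $1-\sum_k b_k \precsim (b_1-\eps)_+$ does not go through.
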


\begin{proof} 
	To lighten notation, we write $a \approx_{\eps} b$ to mean $\|a-b\|<\eps$.
	We denote by $u$ the canonical unitary in $A \rtimes_{\alpha} \Z$.
	
	Fix $\eps>0$ and $F \subset A_0$ a finite subset. To simplify some estimates, we assume $\eps<1/14$. We first note that it suffices to exhibit positive orthogonal contractions $b_1,b_2 \in (A \rtimes_{\alpha} \Z)_{\omega}$ such that the following hold:
	\begin{enumerate}
		\item There exists $c \in (A \rtimes_{\alpha}\Z)_{\omega}$ such that $cc^* = b_1$, $c^*c = b_2$ and $\|[c,a]\|<\eps$ for all $a \in F \cup \{u,u^*\}$.
		\item There exists $d \in (A \rtimes_{\alpha} \Z)_{\omega}$ such that $dd^* = 1- (b_1+b_2)$, $d^*d \leq 2(b_1-\frac{1}{2})_+$
		and $\|[d,a]\|<\eps$ for all $d \in F \cup \{u,u^*\}$.
	\end{enumerate}
	If such elements can be found for any $\eps>0$, then by the Kirchberg $\eps$-test we can find elements $b_1,b_2,c,d \in (A \rtimes_{\alpha} \Z)_{\omega} \cap (A \rtimes_{\alpha} \Z)'$ such that $b_1$ and $b_2$ are orthogonal contractions, $cc^* = b_1$, $c^*c = b_2$, $dd^* = 1- (b_1+b_2)$ and $d^*d \leq 2(b_1-\frac{1}{2})_+$. Therefore, by Proposition \ref{Prop:dim-drop-embedding} there exists a unital homomorphism from $I_{2,3}$ to $(A \rtimes_{\alpha} \Z)_{\omega} \cap (A \rtimes_{\alpha} \Z)'$, and therefore, by Lemma \ref{Lemma:DT09}, the crossed product $A \rtimes_{\alpha} \Z$ is $\Zh$-stable.	
	(We remark that the term $2(b_1-\frac{1}{2})_+$ could be replaced by $\frac{1}{t}(b_1 - t)_+$ for any other $t \in (0,1)$; the choice of $t = 1/2$ is arbitrary and makes no difference here.)
	
	Pick $l$ such that $n_l> 1/\eps^2$. Note that $n_l > 14/\eps$.   
	
	Pick a unitary $v \in A$ such that $\|vav^* - \alpha^{n_l}(a)\|<\eps/14$ for all $a \in \bigcup_{k=-7 n_l}^{7 n_l} \alpha^k(F)$. Notice that if $k,j$ satisfy $|j-k| \leq n_l$ 
	then for any $a \in F$, we have
	$$
	\alpha^{k}(v) \alpha^{j}(a) \alpha^{k}(v^*) = \alpha^k(v \alpha^{j-k}(a) v^*) \approx_{\eps/14} \alpha^k(\alpha^{n_l}(\alpha^{j-k}(a)) = \alpha^{n_l+j}(a)
	.
	$$
	
	Pick a double Rokhlin tower $e_{0,0},e_{0,1},\ldots,e_{0,14 n_l-1} ; e_{1,0},e_{1,1},\ldots,e_{1,14 n_l}\in A_{\omega} \cap A'$. For $k=0,1,\ldots, 14 n_l - 1$, we write $f_k = e_{0,k} + e_{1,k}$.
	
	We now proceed to define elements $b_1,b_2,x_1,x_2,c,d \in A_{\omega}\rtimes \Z$. As the definition is somewhat technical, we provide the following illustration to help visualize the construction (which is more accurate in the single tower Rokhlin picture). The $x$-axis should be thought of as indexing the Rokhlin elements, which should be viewed of as roughly $14n_l$-periodic; the arrows are meant to indicate that the illustrated elements implement Cuntz equivalences.

	\

	\tikzset{every picture/.style={line width=0.75pt}} 
	
	\begin{tikzpicture}[x=0.75pt,y=0.75pt,yscale=-1,xscale=1]
	
	\draw    (102.5,146) -- (124.5,146) ;
	\draw   (113,202) -- (149.5,146) -- (262.5,146) -- (299,202) -- cycle ;
	\draw   (299,202) -- (335.5,146) -- (448.5,146) -- (485,202) -- cycle ;
	\draw  (64.69,202) -- (547.79,202)(113,112) -- (113,212) (540.79,197) -- (547.79,202) -- (540.79,207) (108,119) -- (113,112) -- (118,119)  ;
	\draw    (485,190) -- (485,214) ;
	\draw  [dash pattern={on 4.5pt off 4.5pt}]  (301,147.5) -- (259.75,203.25) ;
	\draw  [dash pattern={on 4.5pt off 4.5pt}]  (301,147.5) -- (333,202.5) ;
	\draw  [dash pattern={on 4.5pt off 4.5pt}]  (114.75,147.25) -- (146.75,202.25) ;
	\draw  [dash pattern={on 4.5pt off 4.5pt}]  (487,147.5) -- (445.75,203.25) ;
	\draw  [draw opacity=0] (200.01,131.2) .. controls (200,131.05) and (200,130.9) .. (200,130.75) .. controls (200,112.39) and (244.77,97.5) .. (300,97.5) .. controls (355.23,97.5) and (400,112.39) .. (400,130.75) .. controls (400,130.9) and (400,131.05) .. (399.99,131.2) -- (300,130.75) -- cycle ; \draw   (200.01,131.2) .. controls (200,131.05) and (200,130.9) .. (200,130.75) .. controls (200,112.39) and (244.77,97.5) .. (300,97.5) .. controls (355.23,97.5) and (400,112.39) .. (400,130.75) .. controls (400,130.9) and (400,131.05) .. (399.99,131.2) ;
	\draw   (209.05,125.99) -- (198.83,137.9) -- (193.29,123.21) ;
	\draw  [draw opacity=0] (238,211.61) .. controls (238,211.66) and (238,211.7) .. (238,211.75) .. controls (238,224.87) and (251.88,235.5) .. (269,235.5) .. controls (286.12,235.5) and (300,224.87) .. (300,211.75) .. controls (300,211.7) and (300,211.66) .. (300,211.61) -- (269,211.75) -- cycle ; \draw   (238,211.61) .. controls (238,211.66) and (238,211.7) .. (238,211.75) .. controls (238,224.87) and (251.88,235.5) .. (269,235.5) .. controls (286.12,235.5) and (300,224.87) .. (300,211.75) .. controls (300,211.7) and (300,211.66) .. (300,211.61) ;
	\draw   (231.29,220.29) -- (236.83,205.6) -- (247.05,217.51) ;
	\draw  [draw opacity=0] (122,209.61) .. controls (122,209.66) and (122,209.7) .. (122,209.75) .. controls (122,222.87) and (135.88,233.5) .. (153,233.5) .. controls (170.12,233.5) and (184,222.87) .. (184,209.75) .. controls (184,209.7) and (184,209.66) .. (184,209.61) -- (153,209.75) -- cycle ; \draw   (122,209.61) .. controls (122,209.66) and (122,209.7) .. (122,209.75) .. controls (122,222.87) and (135.88,233.5) .. (153,233.5) .. controls (170.12,233.5) and (184,222.87) .. (184,209.75) .. controls (184,209.7) and (184,209.66) .. (184,209.61) ;
	\draw  [dash pattern={on 4.5pt off 4.5pt}]  (114.75,147.25) -- (73.5,203) ;
	\draw   (174.95,216.51) -- (185.17,204.6) -- (190.71,219.29) ;
	\draw   (405.71,124.21) -- (400.17,138.9) -- (389.95,126.99) ;
	\draw   (290.95,217.51) -- (301.17,205.6) -- (306.71,220.29) ;
	\draw   (115.29,219.29) -- (120.83,204.6) -- (131.05,216.51) ;
	
	\draw (199,173) node   [align=left] {
		$\displaystyle b_{1}$
	};
	\draw (392,174) node   [align=left] {$\displaystyle b_{2}$};
	\draw (299,175.5) node   [align=left] {$\displaystyle x_{1}$};
	\draw (485,178) node   [align=left] {$\displaystyle x_{2}$};
	\draw (487,225) node    {$14n_{l}$};
	\draw (300,114) node    {$c$};
	\draw (270,220) node    {$d_{1}$};
	\draw (121,169) node   [align=left] {$\displaystyle x_{2}$};
	\draw (151,217) node    {$d_{2}$};

	\end{tikzpicture}

	\
	
		Define $h:\{0,1,\ldots,7n_l-1\} \to [0,1]$ by 
	$$
	h(k) = \left \{ 
		\begin{matrix} 
		k/n_l & \mid & k < n_l \\
		1 & \mid & n_l \leq k \leq 6 n_l \\
		1 - (k/n_l - 6 ) & \mid & 6 n_1 + 1 \leq k \leq 7 n_l - 1
		\end{matrix}
	\right .
	$$ 
	
	Now, for $j=1,2$, define 
	$$ 
	b_1 = \sum_{k=0}^{7n_l-1}h(k) f_k  \; , \; 
	b_2 = \sum_{k=0}^{7n_l-1}h(k) f_{k+ 7n_l}
	$$
	
	Notice that $\|[b_j,u]\|< \frac{1}{n_l} < \eps/14$ for $j=1,2$.

	Set 
	$$
	c = \sum_{k=0}^{7n_l-1}h^{1/2}(k)f_k u^{7n_l *}\alpha^{k}(v^{7})
	$$
	We check that
	\begin{align*}
	c^*c & =  \sum_{k,j = 0}^{7n_l-1} h^{1/2}(k) h^{1/2}(j) \alpha^k(v^{7})^* u^{7n_l} f_k f_j u^{7n_l *} \alpha^j(v^{7}) \\
	&  = \sum_{k = 0}^{7n_l-1} h(k) \alpha^k(v^{7})^* u^{7n_l} f_k u^{7n_l *} \alpha^k(v^{7}) \\
	& = \sum_{k = 0}^{7n_l-1}h(k) \alpha^k(v^{7})^* f_{k+ 7n_l} \alpha^k(v^{7}) \\
	& = \sum_{k = 0}^{7n_l-1}h(k) f_{k+7n_l} = b_2
	\end{align*}
	Likewise,
	\begin{align*}
	cc^* & =  \sum_{k,j = 0}^{7n_l-1}h^{1/2}(k)h^{1/2}(j)
	f_ju^{7n_l *}\alpha^j(v^{7})
	\alpha^k(v^{7})^*u^{7n_l}f_k
	\\
	& =
	\sum_{k,j = 0}^{7n_l-1}h^{1/2}(k)h^{1/2}(j)
	u^{7n_l *}\alpha^j(v^{7})
	\alpha^k(v^{7})^*u^{7n_l}f_j f_k \\
	& =   \sum_{k = 0}^{7n_l-1}h(k)
	u^{7n_l *}\alpha^k(v^{7})
	\alpha^k(v^{7})^*u^{7n_l}f_k =
	\sum_{k = 0}^{7n_l-1}h(k)
	f_k  = b_1
	\end{align*}

	Now, let $a \in F$. Then
	\begin{align*}
	ac & = a \sum_{k=0}^{7n_l-1}h^{1/2}(k)f_ku^{7n_l*}\alpha^{k}(v^{7}) \\
	& = \sum_{k=0}^{7n_l-1}h^{1/2}(k)f_k a u^{7n_l*}\alpha^{k}(v^{7}) \\
	& = \sum_{k=0}^{7n_l-1}h^{1/2}(k)f_k u^{7n_l*} \alpha^{7n_l}(a) \alpha^{k}(v^{7}) \\
	& \approx_{\eps/2}  \sum_{k=0}^{7n_l-1}h^{1/2}(k)f_k u^{7n_l*} \alpha^{k} ( v^{7} ) a = ca
	\end{align*}
	So $\|[c,a]\| < \eps/2$. 
	
	As for $a = u$, 
	\begin{align*}
	uc &= u \sum_{k=0}^{7n_l-1}h^{1/2}(k)f_ku^{7n_l*}\alpha^{k}(v^{7}) \\
	& = \sum_{k=0}^{7n_l-1}h^{1/2}(k)f_{k+1} u^{7n_l*}\alpha^{k+1}(v^{7})u
	\end{align*}
	So
	\begin{align*}
	\|uc-cu\| & = \left \|\sum_{k=1}^{7n_l-1} (h^{1/2}(k) - h^{1/2}(k-1)) f_k \right \|  \\
	& = \max_{k=1,2,\ldots,7n_l-1}| h^{1/2}(k) - h^{1/2}(k-1)| = \sqrt{ \frac{1}{n_l}} < \eps
	\end{align*}
	The estimate for $\|[u^*,c]\|$ is similar.

	Define $g_1 \colon \{ 6 n_l , 6 n_l + 1 , \ldots , 8 n_l \} \to [0,1]$ by 
	$$
	g_1(k) = 1 - |7 - k/n_l|
	$$
	Define
	$g_2 \colon \{ 0 , 1 , \ldots , n_l \} \cup \{ 13 n_l , 13 n_l + 1 , \ldots , 14 n_l  \} \to [0,1]$ by 
	$$
	g_2 = \left \{  \
		\begin{matrix}
		1- k/n_l & \mid & k \leq n_l -1 \\
		k/n_l - 13 & \mid & k \geq 13 n_l
		\end{matrix}
	\right .
	$$
	Set 
	$$
	x_1 = \sum_{k=6 n_l}^{8 n_l} g_1(k) f_k
	$$
	and 
	$$
	x_2 = e_{1,14 n_l} +  \sum_{k=0}^{n_l-1} \left ( g_2(k) f_k + g_2(k+ 13 n_l) f_{k+13 n_l} \right )
	$$	
	Notice that $x_1+x_2 = 1-b_1-b_2$. 
	
	Set 
	$$
	d_1 = \sum_{k=0}^{2 n_l - 1} g_1(k+6n_l)^{1/2} f_{k+6n_l} u^{2n_l} \alpha^k(v^{-2})
	$$
	and
	\begin{align*}
		d_2 & =    \sum_{k=0}^{n_l-1} g_2(k)^{1/2} e_{0,k} u^{-2n_l}\alpha^k(v^2) + \sum_{k=0}^{n_l-1}  g_2(k+ 13 n_l)^{1/2} e_{0,k+13 n_l}  u^{-2n_l}\alpha^{k-n_l}(v^2) + \\
		& \quad \quad \quad +
		\sum_{k=0}^{n_l-1} g_2(k)^{1/2} e_{1,k} u^{-2n_l}\alpha^k(v^2) + \sum_{k=0}^{n_l}  g_2(k+ 13 n_l)^{1/2} e_{1,k+13 n_l}  u^{-2n_l}\alpha^{k-n_l-1}(v^2) 
	\end{align*}

	Set $d = d_1 + d_2$. Notice that $d_1d_2 = d_1^*d_2 = d_1d_2^*$, and therefore $d^*d = d_1^*d_1 + d_2^*d_2$, and $dd^* = d_1d_1^* + d_2 d_2^*$. 
	
	We have 
		\begin{align*}
	d_1^*d_1 
	&= \sum_{k=0}^{2 n_l - 1}  \sum_{j=0}^{2 n_l - 1} g_1(k+6n_l)^{1/2}g_1(j+6n_l)^{1/2} \alpha^j(v^{-2})^*u^{-2n_l}  f_{j+6n_l}   f_{k+6n_l} u^{2n_l} \alpha^k(v^{-2})   \\
	&= \sum_{k=0}^{2 n_l - 1}  g_1(k+6n_l) \alpha^k(v^{-2})^*u^{-2n_l}   f_{k+6n_l} u^{2n_l} \alpha^k(v^{-2})   \\
	&= \sum_{k=0}^{2 n_l - 1}  g_1(k+6n_l) \alpha^k(v^{-2})^*   f_{k+4n_l}\alpha^k(v^{-2})   \\
	&= \sum_{k=0}^{2 n_l - 1}  g_1(k+6n_l)   f_{k+4n_l}  \\
	&\leq \sum_{k=4n_l}^{6n_l-1} f_k .
	\end{align*}
	Likewise, we have
	$$
	d_2^*d_2 \leq \sum_{k=n_l}^{3n_l-1} f_k
	$$
	so 
	$$
	d^*d \leq \sum_{k=n_l}^{6n_l-1} f_k \leq 2(b_1 -1/2)_+   .
	$$
	Now, 
	\begin{align*}
	d_1d_1^* 
		&= \sum_{k=0}^{2 n_l - 1}  \sum_{j=0}^{2 n_l - 1} g_1(k+6n_l)^{1/2}g_1(j+6n_l)^{1/2}  f_{k+6n_l} u^{2n_l} \alpha^k(v^{-2}) \alpha^j(v^{-2})^*u^{-2n_l}  f_{j+6n_l}   \\
		&= \sum_{k=0}^{2 n_l - 1}  \sum_{j=0}^{2 n_l - 1} g_1(k+6n_l)^{1/2}g_1(j+6n_l)^{1/2}  f_{k+6n_l} f_{j+6n_l}  \alpha^{k+2n_l}(v^{-2}) \alpha^{j+2n_l}(v^{-2})^* \\
		&= \sum_{k=0}^{2 n_l - 1}  g_1(k+6n_l)  f_{k+6n_l}  \alpha^{k+2n_l}(v^{-2}) \alpha^{k+2n_l}(v^{-2})^* \\
		&= x_1
	\end{align*}
	and likewise, noting that for any two relevant indices $k,j$ we have 
	$$
	u^{-2n_l} \alpha^k(v^2)\alpha^{-j}(v^{-2})  u^{2n_l} = \alpha^{k-2n_l}(v^2)\alpha^{-j-2n_l}(v^{-2}) \in A ,
	$$
	 this element commutes with the Rokhlin projections, and the Rokhlin projections are pairwise orthogonal, we obtain:
	$$
	d_2d_2^* = x_2 .
	$$
	Thus, $dd^* = 1-b_1-b_2$.

	For any $a \in F$, using the fact that $a u^{2n_l} \alpha^k(v^{-2}) \approx_{\eps/7} u^{2n_l} \alpha^k(v^{-2})a$,
	we have
	\begin{align*}
	a d_1 
	&= \sum_{k=0}^{2 n_l - 1} g_1(k+6n_l)^{1/2} f_{k+6n_l} a u^{2n_l} \alpha^k(v^{-2}) \\
	&\approx_{\eps/7} \sum_{k=0}^{2 n_l - 1} g_1(k+6n_l)^{1/2} f_{k+6n_l} u^{2n_l} \alpha^k(v^{-2})a  \\
	&= d_1a 
	\end{align*}
	and likewise one sees that $\|ad_2 - d_2a\| < \eps/7$ for all $a \in F$. 

	We now estimate $\|[u,d_1]\|$. Note that 
	\begin{align*}
	u d_1 
	&= \sum_{k=0}^{2 n_l - 1} g_1(k+6n_l)^{1/2} u f_{k+6n_l} u^{2n_l} \alpha^k(v^{-2}) \\
	&= \sum_{k=0}^{2 n_l - 1} g_1(k+6n_l)^{1/2} f_{k+6n_l+1} u^{2n_l+1}\alpha^k(v^{-2})
	\end{align*}
	and
		\begin{align*}
	 d_1 u
	&= \sum_{k=0}^{2 n_l - 1} g_1(k+6n_l)^{1/2} f_{k+6n_l} u^{2n_l} \alpha^k(v^{-2}) u \\
	&= \sum_{k=0}^{2 n_l - 1} g_1(k+6n_l)^{1/2} f_{k+6n_l} u^{2n_l+1}\alpha^{k-1}(v^{-2}) \\
	\end{align*}
	Thus, 
	$$
	\|ud_1-d_1u\| 
	 = \max_{k=6n_l,6n_1+1,\ldots,8n_l}| g_1^{1/2}(k) - g_1^{1/2}(k-1)| = \sqrt{ \frac{1}{n_l}} < \eps .
	$$
	In a similar way, we see that $\|u^*d_1-d_1u^*\| < \eps$ as well.
	
	It remains to estimate $\|[u,d_2]\|$, which is somewhat more delicate.  We have
	\begin{align*}
	u d_2 & =  & & \sum_{k=0}^{n_l-1} g_2(k)^{1/2} u e_{0,k} u^{-2n_l}\alpha^k(v^2) \\
	& &+& \sum_{k=0}^{n_l-1}  g_2(k+ 13 n_l)^{1/2} u e_{0,k+13 n_l}  u^{-2n_l}\alpha^{k-n_l}(v^2) \\
	& &+&
	\sum_{k=0}^{n_l-1} g_2(k)^{1/2} u e_{1,k} u^{-2n_l}\alpha^k(v^2) \\
	& &+& \sum_{k=0}^{n_l}  g_2(k+ 13 n_l)^{1/2} u e_{1,k+13 n_l}  u^{-2n_l}\alpha^{k-n_l-1}(v^2) \\
	&= & &\sum_{k=0}^{n_l-1} g_2(k)^{1/2} e_{0,k+1} u^{-2n_l+1}\alpha^k(v^2) \\
	& &+&
	\sum_{k=0}^{n_l-2}  g_2(k+ 13 n_l)^{1/2}  e_{0,k+13 n_l+1}  u^{-2n_l+1}\alpha^{k-n_l}(v^2)  \\
	& &+&
	\sum_{k=0}^{n_l-1} g_2(k)^{1/2} e_{1,k+1} u^{-2n_l+1}\alpha^k(v^2) \\
	& &+& \sum_{k=0}^{n_l-1}  g_2(k+ 13 n_l)^{1/2} e_{1,k+13 n_l+1}  u^{-2n_l+1}\alpha^{k-n_l-1}(v^2)  \\
	& &+& (1-1/n_l)^{1/2} u e_{0,14 n_1 - 1} u^{-2n_l}\alpha^{-1}(v^2) 
		+ u e_{1,14 n_1 } u^{-2n_l}\alpha^{-1}(v^2) 
	\end{align*}
	while
	\begin{align*}
	d_2 u& =  & & \sum_{k=0}^{n_l-1} g_2(k)^{1/2}  e_{0,k} u^{-2n_l}\alpha^k(v^2) u \\
	& &+& \sum_{k=0}^{n_l-1}  g_2(k+ 13 n_l)^{1/2}  e_{0,k+13 n_l}  u^{-2n_l}\alpha^{k-n_l}(v^2) u \\
	& &+&
	\sum_{k=0}^{n_l-1} g_2(k)^{1/2}  e_{1,k} u^{-2n_l}\alpha^k(v^2)u \\
	& &+& \sum_{k=0}^{n_l}  g_2(k+ 13 n_l)^{1/2}  e_{1,k+13 n_l}  u^{-2n_l}\alpha^{k-n_l-1}(v^2) u\\
	&= & &\sum_{k=1}^{n_l-1} g_2(k)^{1/2} e_{0,k} u^{-2n_l+1}\alpha^{k-1}(v^2) \\
	& &+&
	\sum_{k=0}^{n_l-1}  g_2(k+ 13 n_l)^{1/2}  e_{0,k+13 n_l}  u^{-2n_l+1}\alpha^{k-n_l-1}(v^2)  \\
	& &+&
	\sum_{k=1}^{n_l-1} g_2(k)^{1/2} e_{1,k} u^{-2n_l+1}\alpha^{k-1}(v^2) \\
	& &+& \sum_{k=0}^{n_l}  g_2(k+ 13 n_l)^{1/2} e_{1,k+13 n_l}  u^{-2n_l+1}\alpha^{k-n_l-1}(v^2)  \\
	& &+& e_{0,0} u^{-2n_l+1}\alpha^{-1}(v^2) 
	+  e_{1,0 } u^{-2n_l+1}\alpha^{-1}(v^2) 
	\end{align*}
	Now, 
	\begin{align*}
	\left \| ( (1-1/n_l)^{1/2} u e_{0,14 n_1 - 1} 
	+ u e_{1,14 n_1 }  ) - 
		(e_{0,0} u
		+  e_{1,0 } u ) \right  \| &= 1-(1-1/n_l)^{1/2} \\
		&< \frac{1}{n_l} < \eps
	\end{align*}
	The rest of the terms in the expression $ud_2 - d_2u$ can be compared term by term as is done in the case the corresponding estimate for $d_1$, and each is bounded above by $\sqrt{1/n_l}$. It follows that $\|ud_2 - d_2 u \|< \eps$ as well. In a similar manner, one checks that $\|ud_2 - d_2 u \|< \eps$. This completes the proof.
\end{proof}
 \begin{Question}
 	Does the conclusion of Theorem \ref{Thm:NAI-Z-stable} hold if we replace the Rokhlin property by finite Rokhlin dimension, with or without commuting towers?
 \end{Question}


\end{document}